\documentclass[12pt]{article}

\usepackage{amsmath,amssymb,amsthm}
\usepackage[margin=1in]{geometry}
\usepackage{hyperref}
\usepackage{tikz}
\usepackage{graphicx}
\usepackage{url}
\usepackage{enumitem}
\usepackage[mathlines]{lineno}
\usepackage{multirow}
\usepackage{dsfont} 
\usepackage{color}
\usepackage{multicol}
\usepackage{longtable}
\usepackage[title]{appendix}

\definecolor{red}{rgb}{1,0,0}


\setlength{\textheight}{8.8in}
\setlength{\textwidth}{6.5in}



\newtheorem{thm}{Theorem}[section]
\newtheorem{cor}[thm]{Corollary}
\newtheorem{lem}[thm]{Lemma}
\newtheorem{prop}[thm]{Proposition}

\newtheorem{obs}[thm]{Observation}

\newtheorem{quest}[thm]{Question}
\newtheorem{rem}[thm]{Remark}
\theoremstyle{definition}
\newtheorem{defn}[thm]{Definition}
\theoremstyle{example}


\DeclareMathOperator{\epr}{epr}
\DeclareMathOperator{\sepr}{sepr}
\DeclareMathOperator{\rank}{rank}
\DeclareMathOperator{\diag}{diag}

\DeclareMathOperator{\uepr}{uepr}

\newcommand{\R}{\mathbb{R}}
\newcommand{\C}{\mathbb{C}}

\newcommand{\Rnn}{\R^{n\times n}}

\newcommand{\Cnn}{\C^{n\times n}}

\newcommand{\F}{\mathcal{F}}
\newcommand{\T}{\mathcal{T}}

\newcommand{\negat}{\operatorname{neg}}
\newcommand{\OL}{\overline}



\newcommand{\VierOneA}{R_{\tt A^+A^*A^*A^+}}
\newcommand{\VierOneB}{R_{\tt A^+A^+A^*A^-}}
\newcommand{\VierOneC}{R_{\tt A^+A^-A^*A^+}}
\newcommand{\VierOneD}{R_{\tt A^*A^*A^+N}}
\newcommand{\VierOneE}{R_{\tt A^+A^*S^-N}}
\newcommand{\VierOneF}{R_{\tt A^*S^*A^*A^+}}
\newcommand{\VierOneG}{R_{\tt A^+S^*A^*A^-}}
\newcommand{\VierOneH}{R_{\tt A^+S^+A^*A^-}}
\newcommand{\VierOneI}{R_{\tt A^+S^+A^-A^-}}
\newcommand{\VierOneJ}{R_{\tt A^+S^-A^*A^+}}

\newcommand{\VierTwoA}{R_{\tt A^*S^*A^+N}}
\newcommand{\VierTwoB}{R_{\tt A^*S^-A^+N}}
\newcommand{\VierTwoC}{R_{\tt A^-S^+A^+N}}

\newcommand{\VierThreeA}{R_{\tt NA^-A^*A^+}}
\newcommand{\VierThreeB}{R_{\tt NS^-S^*A^+}}
\newcommand{\VierThreeC}{R_{\tt S^+A^*A^*A^-}}

\newcommand{\VierFourA}{R_{\tt S^*A^*A^*A^+}}
\newcommand{\VierFourB}{R_{\tt S^*A^-A^+A^+}}
\newcommand{\VierFourC}{R_{\tt S^*A^-A^+A^-}}
\newcommand{\VierFourD}{R_{\tt S^+A^*A^*A^+}}
\newcommand{\VierFourE}{R_{\tt S^+A^-A^+A^+}}
\newcommand{\VierFourF}{R_{\tt S^+A^-A^+A^-}}
\newcommand{\VierFourG}{R_{\tt S^+A^-A^-A^+}}
\newcommand{\VierFourH}{R_{\tt S^+A^-A^-A^-}}
\newcommand{\VierFourI}{R_{\tt S^*A^*S^*A^+}}
\newcommand{\VierFourJ}{R_{\tt S^+A^*S^-A^+}}
\newcommand{\VierFourK}{R_{\tt S^*S^*A^*A^+}}
\newcommand{\VierFourL}{R_{\tt S^*S^-A^+A^+}}
\newcommand{\VierFourM}{R_{\tt S^*S^-A^+A^-}}
\newcommand{\VierFourN}{R_{\tt S^+S^*A^*A^+}}
\newcommand{\VierFourO}{R_{\tt S^+S^-A^*A^+}}

\newcommand{\VierFiveA}{R_{\tt S^*A^*A^+A^+}}
\newcommand{\VierFiveB}{R_{\tt S^*A^*A^+A^-}}
\newcommand{\VierFiveC}{R_{\tt S^*A^*S^+A^+}}
\newcommand{\VierFiveD}{R_{\tt S^*S^*A^+A^+}}

\newcommand{\VierSixA}{R_{\tt S^*S^*S^*A^+}}
\newcommand{\VierSixB}{R_{\tt S^*S^*S^+A^-}}

\newcommand{\FiveOneA}{R_{\tt A^*A^*A^+A^*A^-}}
\newcommand{\FiveOneB}{R_{\tt A^*A^*S^+A^*A^-}}
\newcommand{\FiveOneC}{R_{\tt A^*S^*A^+A^*A^-}}
\newcommand{\FiveOneD}{R_{\tt A^*S^-A^+A^*A^-}}
\newcommand{\FiveOneE}{R_{\tt A^*S^*S^+A^*A^-}}
\newcommand{\FiveOneF}{R_{\tt A^*S^-S^+A^*A^-}}

\newcommand{\FiveTwoA}{R_{\tt A^*A^*A^+S^*A^-}}
\newcommand{\FiveTwoB}{R_{\tt A^*A^*A^+S^+A^-}}
\newcommand{\FiveTwoC}{R_{\tt A^*A^*A^+S^-A^-}}
\newcommand{\FiveTwoD}{R_{\tt A^*A^*S^+S^*A^-}}
\newcommand{\FiveTwoE}{R_{\tt A^*A^*S^+S^+A^-}}
\newcommand{\FiveTwoF}{R_{\tt A^*A^*S^+S^-A^-}}
\newcommand{\FiveTwoG}{R_{\tt A^*S^*A^+S^*A^-}}
\newcommand{\FiveTwoH}{R_{\tt A^*S^*A^+S^+A^-}}
\newcommand{\FiveTwoI}{R_{\tt A^*S^*A^+S^-A^-}}
\newcommand{\FiveTwoJ}{R_{\tt A^*S^-A^+S^*A^-}}
\newcommand{\FiveTwoK}{R_{\tt A^*S^-A^+S^+A^-}}
\newcommand{\FiveTwoL}{R_{\tt A^*S^-A^+S^-A^-}}
\newcommand{\FiveTwoM}{R_{\tt A^*S^*S^+S^*A^-}}
\newcommand{\FiveTwoN}{R_{\tt S^*S^*S^+S^+A^-}}
\newcommand{\FiveTwoO}{R_{\tt S^*S^*S^+S^-A^-}}
\newcommand{\FiveTwoP}{R_{\tt S^*S^-S^+S^*A^-}}

\newcommand{\SixOneA}{R_{\tt A^-A^*S^+A^+A^*A^-}}
\newcommand{\SixOneB}{R_{\tt A^-A^*S^+A^+S^*A^-}}
\newcommand{\SixOneC}{R_{\tt A^-A^*S^+S^+A^*A^-}}
\newcommand{\SixOneD}{R_{\tt A^-A^*S^+S^+S^*A^-}}

\newcommand{\NSFrealA}{R_{\tt A^+S^+A^-A^+}}
\newcommand{\NSFrealB}{R_{\tt A^-NS^+NA^-}}
\newcommand{\NSFrealC}{R_{\tt A^-S^+A^+S^-A^-}}
\newcommand{\NSFrealD}{R_{\tt A^-S^*S^+A^+S^+A^-}}
\newcommand{\NSFrealE}{R_{\tt NS^-NS^+S^*A^-}}
\newcommand{\NSFrealF}{R_{\tt NS^-NS^+S^+A^-}}

\newcommand{\ComOneA}{C_{\tt NS^-NA^+S^*A^-}}
\newcommand{\ComOneB}{C_{\tt NS^-NA^+S^+A^-}}
\newcommand{\ComTwoA}{C_{\tt NA^-S^*A^+}}
\newcommand{\ComTwoB}{C_{\tt NA^-S^+A^+}}

\newcommand{\NSFcomA}{C_{\tt A^-NA^+A^*A^-}}
\newcommand{\NSFcomB}{C_{\tt NA^-NA^+N}}

\begin{document}
\title{On the signs of the principal minors of Hermitian matrices}
\author{Xavier Mart\'inez-Rivera\thanks{Department of Mathematics, Bates College, Lewiston, ME 04240, USA (martinez.rivera.xavier@gmail.com).
}
\and
Kamonchanok Saejeam\thanks{Department of Physics and Astronomy, Bates College, Lewiston, ME 04240, USA (ksaejeam@bates.edu).
}
}


\maketitle

\begin{abstract}
The signed enhanced principal rank characteristic sequence (sepr-sequence) of a given $n \times n$ Hermitian matrix $B$ is the sequence 
$t_1t_2 \cdots t_n$, where
$t_k$ is $\tt A^*$, $\tt A^+$, $\tt A^-$, $\tt N$, $\tt S^*$, $\tt S^+$, or $\tt S^-$, based on the following criteria:
$t_k = \tt A^*$ if all the order-$k$ principal minors of $B$ are nonzero, and two of those minors are of opposite sign;  
$t_k = \tt A^+$ (respectively, $t_k = \tt A^-$) if all the order-$k$ principal minors of $B$ are positive (respectively, negative);
$t_k = \tt N$ if all the order-$k$ principal minors of $B$ are zero;
$t_k = \tt S^*$ if $B$ has a positive, a negative, and a zero order-$k$ principal minor;
$t_k = \tt S^+$ (respectively, $t_k = \tt S^-$) if $B$ has both a zero and a nonzero order-$k$ principal minor, and all the nonzero order-$k$ principal minors of $B$ are positive (respectively, negative).
A complete characterization of the sequences of order $2$ and order $3$
that do not occur as a subsequence of the sepr-sequence of any Hermitian matrix is presented
(a sequence has order $k$ if it has $k$ terms).
An analogous characterization for real symmetric matrices is presented as well.
\end{abstract}

\noindent{\bf Keywords.}
Principal minor;
Hermitian matrix;
real symmetric matrix;
signed enhanced principal rank characteristic sequence;
enhanced principal rank characteristic sequence.

\medskip

\noindent{\bf AMS subject classifications.}
15A15, 15B57, 15A03.

\section{Introduction}\label{s: intro}
$\null$
\indent
Here, we continue the investigation of the signs of the principal minors of Hermitian matrices that was initiated by the first author in \cite{XMR-sepr}. At the center of the investigation is the sepr-sequence, whose introduction, in \cite{XMR-sepr}, was motivated by the work of Brualdi et al.\  \cite{pr}, who introduced the pr-sequence, as well as by the work of Butler et al.\ \cite{EPR}, who introduced the epr-sequence.  Part of the motivation for the introduction of the pr-, epr- and sepr-sequences stems from the \textit{principal minor assignment problem}, which is stated in \cite{HS}.  
Additional sources of motivation are the instances in which the principal minors of a matrix are of interest, examples of which were stated in \cite[p.\ 1485]{XMR-sepr}.

The definition of certain terminology is needed in order to state the definition of  `sepr-sequence,' including the definition of `epr-sequence.' (The definition of `pr-sequence' is omitted, as no other reference to pr-sequences is made in this paper.)
For a given positive integer $n$, 
$[n]:=\{1,2, \dots,n\}$.
A given matrix $B \in \Cnn$ is 
said to have {\em order} $n$;
if $\alpha, \beta \subseteq \{1,2, \dots,n\}$, then
$B[\alpha, \beta]$ denotes the submatrix of $B$ lying in 
the rows that are indexed by $\alpha$ and 
the columns that are indexed by $\beta$;
we refer to $B[\alpha]:=B[\alpha, \alpha]$ as a \textit{principal} submatrix;
the determinant of a 
$k \times k$ principal submatrix of $B$ is a
\textit{principal} minor of $B$, 
and such a minor has \textit{order} $k$.
For a given Hermitian matrix $B \in \Cnn$,
the \textit{enhanced principal rank characteristic sequence} (\textit{epr-sequence}) of $B$,
which is denoted by $\epr(B)$, was defined in \cite{EPR} to be
$\epr(B) = \ell_1\ell_2 \cdots \ell_n$, where
   \begin{equation*}
      \ell_k =
         \begin{cases}
             \tt{A} &\text{if all the principal minors of $B$ of order $k$ are nonzero;}\\
             \tt{S} &\text{if some but not all the principal minors of $B$ of order $k$ are nonzero;}\\
             \tt{N} &\text{if none of the principal minors of $B$ of order $k$ are nonzero.}
         \end{cases}
   \end{equation*}
We now have the necessary terminology to state the definition of  `sepr-sequence.'
\begin{defn}\rm{\cite[Definition 1.1]{XMR-sepr}}
\normalfont
Let $B \in \Cnn$ be a Hermitian matrix and 
$\epr(B) = \ell_1\ell_2 \cdots \ell_n$.
The \textit{signed enhanced principal rank characteristic sequence} (\textit{sepr-sequence}) of $B$  is
the sequence $\sepr(B) = t_1t_2 \cdots t_n$, where
   \begin{equation*}
      t_k =
         \begin{cases}
             \tt{A^*} &\text{if $\ell_k = \tt{A}$
             and $B$ has both positive and negative order-$k$ principal minors;}\\
             \tt{A^+} &\text{if all the order-$k$ principal minors of $B$ are positive;}\\
             \tt{A^-} &\text{if all the order-$k$ principal minors of $B$ are negative;}\\
             \tt{N} &\text{if all the order-$k$ principal minors of $B$ are zero;}\\
             \tt{S^*} &\text{if $\ell_k = \tt{S}$
             and $B$ has both positive and negative order-$k$ principal minors;}\\
             \tt{S^+} &\text{if $\ell_k = \tt{S}$ and all the order-$k$ principal minors of $B$ are nonnegative;}\\
             \tt{S^-} &\text{if $\ell_k = \tt{S}$ and all the  order-$k$ principal minors of $B$ are nonpositive;}          
         \end{cases}
   \end{equation*}
   \end{defn}
\noindent
for all $j \in [n]$, $[\sepr(B)]_j:=t_j$.

Let $\sigma = x_1x_2 \cdots x_n$ be a sequence from 
$\{\tt A,N,S\}$ or 
$\{\tt A^*,A^+,A^-,N,S^*,S^+,S^-\}$;
the \textit{order} of $\sigma$ is $n$;
we say that $\sigma$ is \textit{attainable} by a given matrix $B$ if 
$\epr(B) = \sigma$ or $\sepr(B) = \sigma$;
for all $p,q \in [n]$ with $p \leq q$,
$x_px_{p+1}x_{p+2} \cdots x_q$ is a \textit{subsequence} of $\sigma$;
if $\sigma$ does not occur as a subsequence of the 
epr- or sepr-sequence of any Hermitian (respectively, real symmetric) matrix, 
then $\sigma$ is said to be \textit{forbidden} (respectively, \textit{forbidden over $\R$}).

The objective of this paper is to answer the following two questions.

\begin{quest}\label{question-Hermitian}
\rm
Which sequences from 
$\{\tt A^*, A^+, A^-, N, S^*, S^+, S^-\}$ of orders 2 and 3 do not occur as a subsequence of the sepr-sequence of any Hermitian matrix? 
\end{quest}

\begin{quest}\label{question-real symmetric}
\rm
Which sequences from 
$\{\tt A^*, A^+, A^-, N, S^*, S^+, S^-\}$ of orders 2 and 3  do not occur as a subsequence of the sepr-sequence of any real symmetric matrix? 
\end{quest}

Observe that Question \ref{question-Hermitian}
(respectively, Question \ref{question-real symmetric})
may be stated as follows: 
Which sequences from 
$\{\tt A^*, A^+, A^-, N, S^*, S^+, S^-\}$ of orders 2 and 3  are forbidden (respectively, forbidden over $\R$)?

This paper expands  the list of sequences from 
$\{\tt A^*, A^+, A^-, N, S^*, S^+, S^-\}$  that are known to be forbidden, and it provides complete answers to Question \ref{question-Hermitian} and Question \ref{question-real symmetric}.

Before an attempt to answer the aforementioned two questions is made, we should verify whether or not the analog questions for epr-sequences of those two questions can be answered with what is currently in the literature.
It turns out that, as demonstrated below, we can.
There are nine ($3^2=9$) sequences from $\{\tt A,N,S\}$ of order $2$.
It follows immediately from \cite[Table 3]{EPR} that each of those nine sequences is not forbidden. 
Furthermore, as every matrix listed or referenced in \cite[Table 3]{EPR} is real, each of those nine sequences is not forbidden over $\R$. 

There are twenty-seven ($3^3=27$) sequences from 
$\{\tt A,N,S\}$ of order $3$. 
Only three of those twenty-seven sequences are forbidden: $\tt NNA$, $\tt NNS$ and $\tt NSA$.

\begin{thm}\label{NNA NNS NSA}
{\rm \cite{EPR}}
A sequence from $\{\tt A,N,S\}$ of order $3$ does not occur as a subsequence of the epr-sequence of any Hermitian matrix 
if and only if 
it is one of the following sequences:
\[
\tt NNA, \  NNS \mbox{ \  and \ } NSA.
\]
\end{thm}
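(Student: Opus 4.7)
The plan is to prove the equivalence in two steps. First, verify that $\tt NNA$, $\tt NNS$, and $\tt NSA$ are indeed forbidden as subsequences. Second, exhibit, for each of the remaining $27-3=24$ length-$3$ words in $\{\tt A, N, S\}^3$, a Hermitian matrix whose epr-sequence contains that word as a subsequence in the sense of the excerpt.

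For the forbidden direction, two results from \cite{EPR} will do the work. The ``$\tt NN$ theorem'' states that if $[\epr(B)]_i = [\epr(B)]_{i+1} = \tt N$ for some $i$, then $[\epr(B)]_j = \tt N$ for every $j \geq i$; this immediately rules out both $\tt NNA$ and $\tt NNS$ as subsequences, since no non-$\tt N$ symbol can follow two consecutive $\tt N$'s. The remaining word, $\tt NSA$, is ruled out by the dedicated ``$\tt NSA$ obstruction'' of \cite{EPR}, which is a separate structural argument about how an $\tt S$ flanked by $\tt N$ on the left and $\tt A$ on the right is incompatible with the interlacing of ranks among principal submatrices of a Hermitian matrix.

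For the non-forbidden direction, the cleanest approach is to inspect \cite[Table 3]{EPR}, which catalogues attainable epr-sequences of small order. It suffices to verify that each of the $24$ candidate words appears as a contiguous factor of some entry of that table: those words that are themselves attainable as length-$3$ epr-sequences are trivially covered, and for any remaining word one either finds it embedded inside a longer attainable epr-sequence already tabulated, or produces it by padding a small attaining matrix with additional direct-summand blocks (e.g., appending a zero or $\pm 1$ scalar block to shift an attainable sequence and thereby realize a desired middle factor). Since every listed matrix in \cite[Table 3]{EPR} is real, the same constructions also show that none of these $24$ words are forbidden over $\R$, which will be relevant later.

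The main obstacle is purely bookkeeping: exhaustively matching each of the $24$ target subwords to a realizing entry of \cite[Table 3]{EPR} (or to a short block-diagonal construction) is routine but tedious. Beyond that, the conceptual content is entirely contained in the $\tt NN$- and $\tt NSA$-obstructions of \cite{EPR}, so no new theoretical difficulty needs to be overcome here; the theorem is essentially a compilation of prior results, which is why the authors simply attribute it to \cite{EPR}.
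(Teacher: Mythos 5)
Your forbidden direction is fine and matches the paper: the $\tt NN$ theorem kills $\tt NNA$ and $\tt NNS$, and a separate result of \cite{EPR} (the paper cites \cite[Corollary 2.7]{EPR}) kills $\tt NSA$. The problem is in the attainability direction. You claim that all $24$ remaining words can be realized by real matrices drawn from a single table of \cite{EPR}, and you explicitly conclude that ``none of these $24$ words are forbidden over $\R$.'' That conclusion is false for two of the words: $\tt NAN$ and $\tt NAS$ \emph{are} forbidden over $\R$ (this is exactly the content of Theorem \ref{NAN-NAS theorem}, via \cite[Theorem 2.14]{EPR}). Consequently no table of real symmetric examples, and no padding of real matrices by direct-summand blocks, can ever produce an epr-sequence containing $\tt NAN$ or $\tt NAS$; your proposed bookkeeping would simply fail to find realizations for these two words, and the proof would stall there.

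The paper handles this by splitting the attainability argument into two cases: for $\sigma \in \{\tt NAN, NAS\}$ it invokes \cite[Table 6.1]{EPR-Hermitian}, a table of attainable epr-sequences of genuinely non-real Hermitian matrices (this is the whole reason the Hermitian and real-symmetric answers differ), and only for the remaining $22$ words does it use the table of real examples in \cite[Table 4]{EPR}. To repair your argument you must produce, or cite, complex Hermitian matrices realizing $\tt NAN$ and $\tt NAS$ as subsequences of their epr-sequences; no purely real construction will do.
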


\begin{proof}
Let $\sigma$ be a sequence from $\{\tt A,N,S\}$ of order 3.

Suppose that 
$\sigma \in \{\tt NNA, \tt NNS, NSA\}$.
By \cite[Theorem 2.3]{EPR} and \cite[Corollary 2.7]{EPR}, $\sigma$ is forbidden.

Suppose that 
$\sigma \notin \{\tt NNA, \tt NNS, NSA\}$. 
If $\sigma \in \{\tt NAN, NAS\}$,
then $\sigma$ is a subsequence of one of the sequences that appears in 
\cite[Table 6.1]{EPR-Hermitian}, implying that
$\sigma$ is not forbidden.
If $\sigma \notin \{\tt NAN, NAS\}$,
then $\sigma$ is a subsequence of one of the sequences that appears in  
\cite[Table 4]{EPR}, implying that 
$\sigma$ is not forbidden. 
\end{proof}

Obviously, the three sequences in the statement of Theorem \ref{NNA NNS NSA} form a subset of the set of sequences from $\{\tt A,N,S \}$ of order $3$ that are forbidden over $\R$. By adding the sequences $\tt NAN$ and $\tt NAS$ to the former set we obtain the latter set.

\begin{thm}\label{NAN-NAS theorem}
{\rm \cite{EPR}}
A sequence  from $\{\tt A,N,S\}$ of order $3$ does not occur as a subsequence of the epr-sequence of any real symmetric matrix 
if and only if 
it is one of the following sequences:
\[
\tt NAN, \ NAS, \ NNA, \  NNS \mbox{ \  and \ } NSA.
\]
\end{thm}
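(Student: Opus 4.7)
The plan is to follow the two-step template used in the proof of Theorem~\ref{NNA NNS NSA}: first show that each of the five listed sequences is forbidden over $\R$, and then exhibit, for every other order-$3$ sequence $\sigma$ from $\{\tt A, N, S\}$, a real symmetric matrix whose epr-sequence contains $\sigma$ as a subsequence. Since any sequence that is forbidden (i.e., over $\C$) is automatically forbidden over $\R$, Theorem~\ref{NNA NNS NSA} immediately disposes of $\tt NNA$, $\tt NNS$, and $\tt NSA$.

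The remaining forward cases are $\tt NAN$ and $\tt NAS$. These are realized by Hermitian matrices (that is precisely why they are absent from Theorem~\ref{NNA NNS NSA}: they appear as subsequences of sequences listed in \cite[Table 6.1]{EPR-Hermitian}), so their exclusion over $\R$ has to come from a real-symmetric-specific obstruction. My plan is to invoke a result from \cite{EPR} on the propagation of the pattern $\tt NA$ in the epr-sequence of a real symmetric matrix; the expected statement is that $\ell_k \ell_{k+1} = \tt NA$ forces $\ell_{k+2} = \tt A$ whenever that term exists, which rules out both $\tt NAN$ and $\tt NAS$ simultaneously. This is the one step that genuinely uses the hypothesis $B \in \Rnn$ rather than $B \in \Cnn$.

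For the reverse direction, I would run through the $27 - 5 = 22$ admissible order-$3$ sequences and check each one against \cite[Table 4]{EPR}, which lists epr-sequences of explicit real symmetric matrices. A short case split, organized by whether $\sigma$ begins with $\tt N$, contains $\tt N$ only in the interior, or is $\tt N$-free, should show that each admissible $\sigma$ embeds as a subsequence of some epr-sequence in that table. Unlike the proof of Theorem~\ref{NNA NNS NSA}, no appeal to \cite[Table 6.1]{EPR-Hermitian} is permitted here, since the matrices listed there are Hermitian rather than real symmetric.

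The main obstacle is locating and correctly citing the precise $\tt NA$-propagation lemma for real symmetric matrices that excludes $\tt NAN$ and $\tt NAS$; everything else is routine bookkeeping against the published tables together with an appeal to Theorem~\ref{NNA NNS NSA}.
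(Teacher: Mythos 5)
Your proposal matches the paper's proof essentially step for step: the paper handles $\tt NNA$, $\tt NNS$, $\tt NSA$ via Theorem \ref{NNA NNS NSA}, excludes $\tt NAN$ and $\tt NAS$ by citing \cite[Theorem 2.14]{EPR} (precisely the real-symmetric $\tt NA$-propagation result you anticipated), and verifies the remaining twenty-two sequences against \cite[Table 4]{EPR} only, avoiding the Hermitian table. No gaps; this is the same argument.
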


\begin{proof}
Let $\sigma$ be a 
sequence from $\{\tt A,N,S\}$ of order 3.

Suppose that 
$\sigma \in \{\tt NAN, NAS, NNA, NNS, NSA\}$.
By \cite[Theorem 2.14]{EPR} and 
Theorem \ref{NNA NNS NSA},
$\sigma$ is forbidden over $\R$. 

Suppose that 
$\sigma \not\in 
\{\tt NAN, NAS, NNA, NNS, NSA\}$.
Observe that $\sigma$ is a subsequence of one of the sequences that appears in  
\cite[Table 4]{EPR}, implying that
$\sigma$ is not forbidden over $\R$.
\end{proof}

Observe that Theorems \ref{NNA NNS NSA} and \ref{NAN-NAS theorem}, respectively, provide answers to the analog questions for epr-sequences of Questions \ref{question-Hermitian} and \ref{question-real symmetric}.

We may now start our quest to answer Questions \ref{question-Hermitian} and \ref{question-real symmetric}. 
As seen below, what is currently in the literature is not enough to answer those two questions.
Theorems \ref{NNA NNS NSA} and \ref{NAN-NAS theorem} imply that their answers are not the same.
The natural first step in our quest is to make a list of the sequences from $\{\tt A^*, A^+, A^-, N, S^*, S^+, S^-\}$ of orders 2 and 3 that are known to be forbidden.

Unlike with sequences from $\{\tt A,N,S\}$ of order $2$, there are sequences from 
$\{\tt A^*, A^+, A^-, N, S^*, S^+, S^-\}$ of order $2$ that are forbidden.

\begin{thm}\label{sepr order-2 summary}
{\rm \cite[Theorem 3.4]{XMR-sepr}}
Neither $\tt A^*N$ nor $\tt NA^*$ occurs as a subsequence of the sepr-sequence of any Hermitian matrix.
\end{thm}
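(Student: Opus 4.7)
The plan is to handle the two forbidden patterns independently using rank and inertia arguments. Both rely on two standard facts about Hermitian matrices: (i) a Hermitian matrix of rank $r$ has some nonvanishing order-$r$ principal minor, equivalently, if every order-$k$ principal minor of a Hermitian matrix $B$ vanishes then $\rank(B)\le k-1$; and (ii) if $B$ is Hermitian of rank $k$, then every invertible order-$k$ principal submatrix of $B$ has determinant whose sign depends only on the nonzero eigenvalues of $B$, not on the choice of index set.

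For the pattern $\tt NA^*$, suppose for contradiction that some Hermitian $B\in\Cnn$ satisfies $[\sepr(B)]_k=\tt N$ and $[\sepr(B)]_{k+1}=\tt A^*$ for some $k$. Fact (i) applied to $t_k=\tt N$ forces $\rank(B)\le k-1$, so every order-$(k+1)$ principal submatrix has rank at most $k-1$, hence vanishing determinant. This contradicts the existence of a nonzero order-$(k+1)$ principal minor required by $t_{k+1}=\tt A^*$.

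For the pattern $\tt A^*N$, suppose instead $t_k=\tt A^*$ and $t_{k+1}=\tt N$. Fact (i) applied to $t_{k+1}=\tt N$ gives $\rank(B)\le k$, while $t_k=\tt A^*$ says every order-$k$ principal minor of $B$ is nonzero, so $\rank(B)=k$ and every order-$k$ principal submatrix $B[\alpha]$ is invertible. To establish fact (ii), I would apply the spectral theorem to write $B=\sum_{i=1}^{k}\lambda_i u_i u_i^*$ with $\lambda_1,\dots,\lambda_k\in\R\setminus\{0\}$ and $u_1,\dots,u_k$ orthonormal, and set $\Lambda_0=\diag(\lambda_1,\dots,\lambda_k)$. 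Restricting to rows and columns indexed by $\alpha$ yields $B[\alpha]=U_\alpha\Lambda_0 U_\alpha^*$, where $U_\alpha\in\C^{k\times k}$ is the matrix whose $i$-th column is the subvector $u_i[\alpha]$. Therefore $\det B[\alpha]=(\lambda_1\cdots\lambda_k)\,|\det U_\alpha|^2$, and since $B[\alpha]$ is invertible one has $|\det U_\alpha|^2>0$; hence the sign of $\det B[\alpha]$ equals the sign of $\lambda_1\cdots\lambda_k$ for every such $\alpha$. Thus all order-$k$ principal minors of $B$ share a common sign, contradicting $t_k=\tt A^*$.

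The main obstacle is the $\tt A^*N$ case, whose heart is the sign rigidity of invertible order-$k$ principal submatrices when $\rank(B)=k$; the spectral factorization makes this transparent by expressing every such determinant as the fixed real scalar $\lambda_1\cdots\lambda_k$ times the nonnegative quantity $|\det U_\alpha|^2$. An equivalent route, should one wish to bypass the spectral theorem, is to combine Cauchy's interlacing theorem with Sylvester's law of inertia to show that every invertible order-$k$ principal submatrix of $B$ has the same inertia as the nonzero spectrum of $B$.
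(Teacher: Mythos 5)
There is a genuine gap: the ``equivalently'' in your Fact (i) is false, and both halves of your argument lean on it. The correct contrapositive of ``a Hermitian matrix of rank $r$ has a nonzero order-$r$ principal minor'' is only that if every order-$k$ principal minor of $B$ vanishes then $\rank(B)\neq k$; it does not follow that $\rank(B)\le k-1$, since the rank may exceed $k$. For instance, $B=\mtx{0&1\\1&0}$ has both order-$1$ principal minors equal to zero yet $\rank(B)=2$, and $\sepr(B)={\tt NA^-}$. This example shows your $\tt NA^*$ argument proves too much: the identical reasoning would forbid $\tt NA^-$ and $\tt NA^+$, which are attainable (${\tt NA^-}$ above; ${\tt NA^+N}$ via $\NSFcomB$ in Table \ref{long table}). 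In the $\tt A^*N$ half the same false inference is used to get $\rank(B)\le k$ from $t_{k+1}={\tt N}$; note that the present paper's proof of Theorem \ref{S*N and NS*} reasons in exactly the opposite direction (``as $c_{k+1}={\tt N}$, and because the rank of $C$ is principal, $\rank(C)\ge k+2$''), precisely because an all-zero level of principal minors rules out only that single value of the rank. Your spectral factorization establishing Fact (ii) is correct---it is essentially Lemma \ref{same sign lemma}---but it is invoked under the unestablished hypothesis $\rank(B)=k$ and says nothing if $\rank(B)\ge k+2$.

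Both cases need an argument that uses the vanishing minors more seriously than a global rank bound. For $\tt A^*N$, one workable route: every $(k+1)\times(k+1)$ principal submatrix $C$ of $B$ is singular with all of its order-$k$ principal minors nonzero, so $\rank(C)=k$ and Lemma \ref{same sign lemma} forces all order-$k$ principal minors of $C$ to share a sign; passing from any $k$-subset $\alpha$ to any other $k$-subset $\beta$ by single-index exchanges, with each consecutive pair contained in a common $(k+1)$-subset, then shows all order-$k$ principal minors of $B$ have the same sign, contradicting $t_k={\tt A^*}$. The $\tt NA^*$ case requires a separate argument of this localized kind; the rank-collapse shortcut is not available.
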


The next theorem provides a list of sequences of order 3 that are known to be forbidden.

\begin{thm}\label{sepr order-3 summary}
{\rm \cite{XMR-sepr}}
Let $\sigma$ be a sequence from 
$\{\tt A^*,A^+,A^-,N,S^*,S^+,S^-\}$ of order $3$.
If one of the following statements holds, then $\sigma$ does not occur as a subsequence of the sepr-sequence of any Hermitian matrix:

\begin{enumerate}
\item $\sigma \in \{\tt A^+XA^+,A^-XA^-,S^+XA^+,S^-XA^-\}$, 
for some $\tt X \in \{\tt A^*,N,S^*,S^+,S^-\}$.
\item $\sigma \in \{\tt A^+YS^+,A^-YS^-,S^+YS^+,S^-YS^-\}$,
for some $\tt Y \in \{\tt A^*,N\}$. 
\item $\sigma \in \{\tt S^*NN, S^*NZ, ZNS^*\}$,
for some $\tt Z \in \{\tt A^+,A^-,S^+,S^-\}$.
\end{enumerate}
\end{thm}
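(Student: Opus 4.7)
My plan is to recognize that Theorem \ref{sepr order-3 summary} is a compilation of individual forbidden-pattern results from \cite{XMR-sepr}, so in the final write-up each pattern listed in (1)--(3) is ruled out by pointing to the corresponding lemma, proposition, or theorem of that reference. Still, it is worth indicating the unifying techniques so that the three statements look like pieces of a single machine rather than an ad hoc list.

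The workhorse is the Hermitian Sylvester identity,
\[
\det B[\alpha\cup\{i\}]\,\det B[\alpha\cup\{j\}]
= \det B[\alpha]\,\det B[\alpha\cup\{i,j\}]
+ \bigl|\det B[\alpha\cup\{i\},\,\alpha\cup\{j\}]\bigr|^{2},
\]
valid for any Hermitian $B\in\Cnn$, any $\alpha\subseteq[n]$, and any distinct $i,j\in[n]\setminus\alpha$. Since the correction term on the right is nonnegative, the identity pins down the sign of the product of two adjacent order-$(|\alpha|+1)$ principal minors in terms of the sandwiching order-$|\alpha|$ and order-$(|\alpha|+2)$ minors. For statement (1), at a subsequence in positions $p,p+1,p+2$, the hypothesis that $t_p$ and $t_{p+2}$ lie in $\{\tt A^+,A^-,S^+,S^-\}$ with matching signs forces $\det B[\alpha]\det B[\alpha\cup\{i,j\}]\ge 0$ for every nested pair, so the identity makes the product of adjacent order-$(p+1)$ minors nonnegative. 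For $X\in\{\tt A^*,S^*\}$ this is already enough: the Johnson-graph connectivity of the adjacency relation on $(p+1)$-subsets promotes ``same sign on adjacent supports'' to ``same sign everywhere,'' contradicting the mixed-sign requirement of $\tt A^*$ or $\tt S^*$. For $X\in\{\tt N,S^+,S^-\}$, the vanishing of at least one order-$(p+1)$ minor leads, via the identity and a sign-propagation argument that invokes the $\tt A^+$ or $\tt A^-$ at position $p+2$ in the $\tt S^+XA^+$ and $\tt S^-XA^-$ variants, to a forced zero at level $p$ that contradicts the nonzero order-$p$ minor demanded by the label at position $p$. Statement (2) is symmetric: it uses the same identity (when $Y=\tt A^*$, combined with Johnson connectivity) or the rank characterization below (when $Y=\tt N$, which forces all order-$(p+2)$ principal minors to vanish, contradicting $\tt S^+$ or $\tt S^-$ at that level).

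The most delicate case, and the one I expect to be the main obstacle, is statement (3): the $\tt N$ in the middle position kills the Sylvester lever, because the product on the left-hand side of the identity vanishes identically at level $p+1$. Here one must instead invoke the Hermitian rank characterization---the vanishing of every order-$(p+1)$ principal minor of a Hermitian matrix implies $\rank B\le p$. This rank bound makes every order-$(p+2)$ principal minor vanish, which immediately rules out $\tt S^*NZ$ and $\tt ZNS^*$ for $Z\in\{\tt A^+,A^-,S^+,S^-\}$; for $\tt S^*NN$, the same bound combined with the nonzero order-$p$ minor that $\tt S^*$ guarantees forces $\rank B=p$ exactly, whereupon a rank-$p$ factorization $B=C^*DC$ with $D$ a $p\times p$ signature matrix, together with the Cauchy--Binet formula, shows that all nonzero order-$p$ principal minors share the common sign $(-1)^{r_-}$, contradicting the mixed-sign requirement of $\tt S^*$. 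Combining these three case analyses with the direct citations in \cite{XMR-sepr} yields the three statements of the theorem.
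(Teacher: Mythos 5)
Your top-level plan --- rule out each listed pattern by citing the corresponding result of \cite{XMR-sepr} --- is exactly what the paper does: its proof of this theorem consists entirely of pointers to Theorems 3.5 and 3.10, Propositions 3.8, 3.9, 3.12 and 3.15, and Corollary 3.3 of that reference. Had you stopped at the citations, the proposal would simply coincide with the paper.

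The supplementary sketch, however, rests on a false lemma. The ``Hermitian rank characterization'' you invoke --- that the vanishing of every order-$(p+1)$ principal minor forces $\rank B\le p$ --- is not true. The correct fact (the one this paper records) is that the rank of a Hermitian matrix equals the order of a \emph{largest} nonsingular principal submatrix; a single level of all-zero principal minors bounds nothing unless every higher level vanishes too, which is what the $\tt NN$ Theorem supplies when two consecutive levels vanish. A counterexample sits inside this very paper: $\NSFrealB$ has sepr-sequence $\tt A^-NS^+NA^-$, so all of its order-$2$ principal minors vanish while some order-$3$ principal minors are positive and its rank is $5$. Your lemma would ``prove'' that an $\tt N$ term can never be followed by a non-$\tt N$ term, i.e.\ that attainable sequences such as $\tt NS^+$ and $\tt S^+NA^-$ are forbidden. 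This invalidates your argument for statement (2) with $Y=\tt N$ (where the correct route is the Sylvester identity itself: $0=\det B[\alpha]\det B[\alpha\cup\{i,j\}]+|\cdot|^2$ forces the order-$(p+2)$ minors to have sign opposite to the flanking order-$p$ minors, and the $\tt S^\pm$ flanks need extra care) and for $\tt S^*NZ$ and $\tt ZNS^*$ in statement (3), where the genuine obstruction is the interaction of Lemma \ref{same sign lemma} with a rank analysis of the kind carried out in Theorem \ref{S*N and NS*}, not a blanket vanishing of the order-$(p+2)$ minors; only the $\tt S^*NN$ case can be rescued, via the $\tt NN$ Theorem. A second, smaller gap: in statement (1) with $X=\tt S^*$ and a flanking $\tt S^+$ or $\tt S^-$ (e.g.\ $\tt S^+S^*A^+$), the Sylvester identity only gives that products of adjacent order-$(p+1)$ minors are nonnegative, and the zero minors that $\tt S^*$ guarantees can disconnect your Johnson-graph sign propagation; the closely related patterns $\tt S^+S^*S^+$ require the full strength of the delicate argument in Theorem \ref{A+S*S+ and S+S*S+}, which is a sign that these sub-cases cannot be dispatched by connectivity alone.
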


\begin{proof}
If statement (1) holds, then \cite[Theorems 3.5 and 3.10]{XMR-sepr} imply that $\sigma$ is forbidden.
If statement (2) holds, then \cite[Propositions 3.8, 3.12 and 3.15]{XMR-sepr} imply that $\sigma$ is forbidden.
If statement (3) holds, then \cite[Corollary 3.3]{XMR-sepr} and \cite[Propositions 3.9, 3.12 and 3.15]{XMR-sepr} imply that $\sigma$ is forbidden.
\end{proof}

Observe that, by Theorem \ref{sepr order-2 summary}, any sequence of order $3$ that contains either $\tt A^*N$ or $\tt NA^*$ as a subsequence is forbidden as well.
As implied above, there are sequences from 
$\{\tt A^*, A^+, A^-, N, S^*, S^+, S^-\}$ of orders 2 and 3 that are forbidden but are currently not known to be forbidden. 
Indeed, 
in our quest to answer Question \ref{question-Hermitian}, it is established that the following sequences are forbidden:
$\tt NS^*$, $\tt S^*N$,
$\tt A^+S^*S^+$, $\tt S^+S^*S^+$,      
$\tt A^-S^*S^-$, $\tt S^-S^*S^-$
(see Section \ref{s: Hermitian}).

This paper has five sections, and it unfolds as follows.
The part of this section following this paragraph introduces additional notation that we have adopted.
Section \ref{s: known facts} contains a list of known facts that are referenced in subsequent sections.
The purpose of Section \ref{s: lemmas} is to present preliminary results that are necessary to establish our main results.
Section \ref{s: Hermitian} is devoted to providing a (complete) answer to Question \ref{question-Hermitian}. 
In Section \ref{s: real symmetric}, we confine our attention to real symmetric matrices, and we provide a (complete) answer to Question \ref{question-real symmetric}.

Let $\sigma = x_1x_2 \cdots x_n$ be a sequence 
from 
$\{\tt A,N,S\}$ or 
$\{\tt A^*,A^+,A^-,N,S^*,S^+,S^-\}$.
If $\sigma$ is the epr-  or sepr-sequence of some matrix, then, for any integer $k$ with $1 \leq k \leq n$,
$x_1x_2 \cdots x_k$ is an \textit{initial} subsequence of $\sigma$. 
The \textit{underlying sequence} of 
$x_1x_2 \cdots x_n$, 
which is denoted by $\uepr(x_1x_2 \cdots x_n)$,
is the sequence from 
$\{\tt A,N,S\}$ that results from removing the superscripts (if any) of the terms of $x_1x_2 \cdots x_n$. 
For example,
$\uepr({\tt A^+NS^-S^*}) = \tt ANSS$.
The {\it negative} of $x_1x_2 \cdots x_n$, 
which is denoted by $\negat(x_1x_2 \cdots x_n)$,
is the sequence that results from
replacing `+' superscripts with `-' superscripts
in $x_1x_2 \cdots x_n$, and vice versa.
For example, the negative of the sequence
${\tt S^-S^*A^*A^+N}$ is
${\tt S^+S^*A^*A^-N}$.

The conjugate of a complex number $a$ is denoted by $\overline{a}$.
For a given matrix $B$, the matrix that results from replacing each entry of $B$ with its conjugate is denoted by $\overline{B}$; 
and the conjugate transpose of $B$ is denoted by $B^*$ (i.e., $B^* = \overline{B}^T$).
With $O_n$ we denote the $n \times n$ zero matrix.
The $n\times n$ diagonal matrix whose 
$j$th diagonal entry is $d_j$, for all $j \in [n]$, is 
denoted by $\diag(d_1,d_2, \dots,d_n)$.
For two given matrices $B$ and $C$,
$B \oplus C$ denotes the direct sum of $B$ and $C$; that is, $B \oplus C$ is the block-diagonal matrix whose diagonal blocks are the matrices $B$ and $C$ (in that order). 
If the notation 
$R_{\sigma}$ (respectively, $C_{\sigma}$), 
for some sequence $\sigma$, is used to denote a matrix, then it is the case that 
$R_{\sigma}$ (respectively, $C_{\sigma}$) is real 
(respectively, non-real) and 
$\sepr(R_{\sigma})=\sigma$
(respectively, $\sepr(C_{\sigma})=\sigma$).

\section{Known facts}\label{s: known facts}
$\null$
\indent
This section contains a list of known facts that are referenced in subsequent sections.

The following lemma is an elementary fact that follows immediately from the fact that appending a row and a column to a matrix of rank $r$ results in a matrix whose rank is at most $r+2$.

\begin{lem}
\label{rank when deleting}
Let $B \in \Cnn$ and $\rank(B) = r$.
Then the rank of any $(n-1)\times (n-1)$ submatrix of $B$ is at least $r-2$.
\end{lem}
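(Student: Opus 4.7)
The proof is essentially given away by the remark preceding the statement, so the plan is just to unpack it cleanly. The idea is to view $B$ as being obtained from the $(n-1)\times(n-1)$ submatrix in question by adjoining one row and one column, and then to invoke the standard fact that each such adjoining operation can raise the rank by at most $1$.

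More concretely, let $C = B[\alpha,\beta]$ be an arbitrary $(n-1)\times(n-1)$ submatrix of $B$, where $\alpha,\beta\subseteq[n]$ each have size $n-1$; say $\{i\} = [n]\setminus\alpha$ and $\{j\} = [n]\setminus\beta$. First I would pass (after a simultaneous row and column permutation, which leaves the rank of $B$ unchanged) to the matrix $B'$ obtained from $B$ by moving row $i$ to the last position and column $j$ to the last position, so that $C$ sits in the upper-left $(n-1)\times(n-1)$ block of $B'$. Then $B'$ can be viewed as the result of appending one extra column (of length $n-1$) and then one extra row (of length $n$) to $C$.

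The key step is the standard rank inequality: for any matrix $M$ and any single added column $v$, $\rank([M\mid v]) \le \rank(M)+1$, and similarly $\rank\!\left(\!\left[\begin{smallmatrix}M\\w^T\end{smallmatrix}\right]\!\right) \le \rank(M)+1$ for any added row $w^T$. Applying this twice yields $\rank(B) = \rank(B') \le \rank(C)+2$, which rearranges to $\rank(C) \ge r-2$, as desired.

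There is essentially no obstacle here; the only thing to be careful about is the bookkeeping around the permutation that places the deleted row and column last, so that the submatrix $C$ really appears as a corner block and the two rank-$\le\!+\!1$ steps can be applied in succession. Since $\alpha$ and $\beta$ are arbitrary, this gives the bound for every $(n-1)\times(n-1)$ submatrix, not merely the principal ones, which matches the full generality of the statement.
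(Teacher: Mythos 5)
Your proposal is correct and matches the paper's intended argument exactly: the paper offers no proof beyond the remark that appending a row and a column can raise the rank by at most $2$, and your write-up simply unpacks that observation (the reference to a ``simultaneous'' permutation is a slight misnomer when the deleted row and column indices differ, but any independent row and column permutations preserve rank, so nothing is affected).
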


Our next fact, which is well-known, asserts that the rank of a non-zero Hermitian matrix $B$ is equal to the order of a largest nonsingular principal submatrix of $B$.

\begin{thm}
Let $B \in \Cnn$ be Hermitian.
Then $\rank(B) = \max \{ |\alpha| : \alpha \subseteq [n] \mbox{ and } \det (B[\alpha]) \neq 0 \}$
(where the maximum over the empty set is defined to be 0).
\end{thm}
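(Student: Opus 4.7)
The plan is to establish the identity by proving two inequalities separately. Writing $r=\rank(B)$ and $m=\max\{|\alpha|:\alpha\subseteq[n],\det(B[\alpha])\neq 0\}$ (with the convention $m=0$ when no such $\alpha$ exists), one direction is a purely linear-algebraic fact that does not use the Hermitian hypothesis, while the other direction is where that hypothesis is essential.

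For the easy inequality $m\leq r$, I would argue as follows. Suppose $\alpha\subseteq[n]$ satisfies $\det(B[\alpha])\neq 0$. Then $B[\alpha]$ has $|\alpha|$ linearly independent rows, and hence the rows of the full matrix $B$ indexed by $\alpha$ must also be linearly independent, since extending a row by additional entries cannot destroy a linear independence relation. Therefore $|\alpha|\leq\rank(B)=r$, and taking the maximum over all such $\alpha$ gives $m\leq r$.

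For the reverse inequality $r\leq m$, I would exploit the standard relationship between principal minors and the characteristic polynomial, namely
\[
\det(xI-B)=\sum_{k=0}^{n}(-1)^{k}E_{k}\,x^{n-k},\qquad E_{k}:=\sum_{|\alpha|=k}\det(B[\alpha]).
\]
By comparing this expansion with the factorization of $\det(xI-B)$ over the eigenvalues, $E_{k}$ equals the $k$-th elementary symmetric function of the eigenvalues $\lambda_{1},\ldots,\lambda_{n}$ of $B$. Now, because $B$ is Hermitian, the spectral theorem gives that $B$ is unitarily diagonalizable, so $\rank(B)$ equals the number of nonzero eigenvalues counted with multiplicity; thus exactly $r$ of the $\lambda_{i}$ are nonzero. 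Consequently, in the sum
\[
E_{r}=\sum_{|S|=r}\prod_{i\in S}\lambda_{i},
\]
every term that uses at least one zero eigenvalue vanishes, leaving exactly one nonzero term (the product of the $r$ nonzero eigenvalues). Hence $E_{r}\neq 0$, which forces at least one summand $\det(B[\alpha])$ with $|\alpha|=r$ to be nonzero, producing a witness that $m\geq r$.

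There is no significant obstacle here; the only point requiring care is the identification of $\rank(B)$ with the number of nonzero eigenvalues, which is where the Hermitian (hence diagonalizable) hypothesis is genuinely used—for a general matrix, nilpotent Jordan blocks would break this count. The degenerate case $B=O_{n}$ is automatic: $r=0$, no nonempty $\alpha$ yields $\det(B[\alpha])\neq 0$, and the stated convention gives $m=0$.
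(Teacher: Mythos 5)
Your proof is correct. The paper states this result without proof, labelling it as well-known, so there is no argument in the paper to compare against; your two-inequality structure is the standard one. The easy direction ($m\leq r$) is handled properly: a nontrivial dependence among the full rows of $B$ indexed by $\alpha$ would restrict to one among the rows of $B[\alpha]$, so nonsingularity of $B[\alpha]$ forces $|\alpha|\leq\rank(B)$, and this indeed needs no Hermitian hypothesis. The hard direction via $\det(xI-B)=\sum_{k}(-1)^{k}E_{k}x^{n-k}$, the identification of $E_{k}$ with the $k$-th elementary symmetric function of the eigenvalues, and the count of nonzero eigenvalues of a diagonalizable matrix is sound, and you correctly isolate where Hermitian-ness (really, just diagonalizability) enters — a nilpotent Jordan block is exactly the counterexample for general matrices. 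The only cosmetic remark is that your argument proves the slightly stronger fact that the \emph{sum} of all order-$r$ principal minors is nonzero, which is in the same spirit as Lemma 2.4 of the paper (all nonzero order-$r$ principal minors share a sign); had you wanted a purely matrix-theoretic alternative, one could instead take $r$ linearly independent columns of $B$, note that by conjugate symmetry the corresponding $r$ rows of that $n\times r$ block still have rank $r$, and conclude that the resulting $r\times r$ principal submatrix is nonsingular.
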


For a given Hermitian matrix $B$, we shall reference the previous theorem by simply saying that `the rank of $B$ is principal.'

The simple observation that a simultaneous permutation of the rows and columns of a given matrix $B$ leaves $\sepr(B)$ invariant is one that is exploited here.

\begin{obs}\label{permutation}
Let $B \in \Cnn$ be Hermitian and
$P \in \Rnn$ be a permutation matrix. 
Then $PBP^T$ is Hermitian and 
$\sepr(PBP^T)=\sepr(B)$.
\end{obs}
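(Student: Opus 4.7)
The plan is to verify the two claims directly from the definitions; both are routine, so the proposal is essentially to chase through how permutation similarity acts on principal submatrices.

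\medskip

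\noindent \textbf{Step 1: Hermiticity.} First I would check that $PBP^T$ is Hermitian. Since $P$ is real, $P^* = P^T$, so
\[
(PBP^T)^* = (P^T)^* B^* P^* = PB^*P^T = PBP^T,
\]
using $B^* = B$.

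\medskip

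\noindent \textbf{Step 2: Principal minors coincide as multisets.} Let $\pi$ be the permutation of $[n]$ associated with $P$, i.e., the permutation satisfying $(PMP^T)_{ij} = M_{\pi^{-1}(i), \pi^{-1}(j)}$ for any $M \in \Cnn$. For a fixed $\alpha \subseteq [n]$ with $|\alpha|=k$, I would show
\[
(PBP^T)[\alpha] = Q\, B[\pi^{-1}(\alpha)]\, Q^T
\]
for some $k \times k$ permutation matrix $Q$ (arising from how the elements of $\pi^{-1}(\alpha)$ are reordered when listed as an index set). Taking determinants,
\[
\det\bigl((PBP^T)[\alpha]\bigr) = \det(Q)^2 \det\bigl(B[\pi^{-1}(\alpha)]\bigr) = \det\bigl(B[\pi^{-1}(\alpha)]\bigr).
\]
As $\alpha$ ranges over all $k$-subsets of $[n]$, so does $\pi^{-1}(\alpha)$; hence the multiset of order-$k$ principal minors of $PBP^T$ equals that of $B$.

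\medskip

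\noindent \textbf{Step 3: Conclude.} Since the $k$th term of the sepr-sequence depends only on the multiset of signs (positive, negative, zero) among the order-$k$ principal minors, Step 2 gives $[\sepr(PBP^T)]_k = [\sepr(B)]_k$ for every $k \in [n]$, so $\sepr(PBP^T) = \sepr(B)$.

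\medskip

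There is no real obstacle here; the only mildly delicate point is making precise the bookkeeping in Step 2, namely that extracting a principal submatrix of $PBP^T$ indexed by $\alpha$ is the same (up to a permutation similarity on the smaller matrix) as extracting the principal submatrix of $B$ indexed by $\pi^{-1}(\alpha)$. Everything else is immediate from standard properties of determinants and of the conjugate transpose.
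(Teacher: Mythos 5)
Your proposal is correct and is exactly the routine verification the paper has in mind; the paper states this as an Observation with no proof, treating it as the standard fact that permutation similarity preserves Hermiticity and permutes the set of order-$k$ principal minors among themselves (with $\det(Q)^2=1$ handling the reordering within each submatrix). Nothing further is needed.
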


The following fact is well-known 
(see, for example, \cite[p.\ 259]{Zhang}).

\begin{lem}\label{same sign lemma}
Let $B \in \Cnn$ be Hermitian and
$\rank(B) = r$.
Then all the nonzero principal minors of $B$ of
order $r$ have the same sign.
\end{lem}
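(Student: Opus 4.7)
The plan is to deduce this from Cauchy's interlacing theorem for Hermitian matrices, via a direct inertia computation. Since the sign of the determinant of a nonsingular Hermitian matrix is $(-1)^q$, where $q$ is the number of its negative eigenvalues, the lemma reduces to showing that every nonsingular principal submatrix of $B$ of order $r$ has the same inertia, namely the inertia of $B$ with the zero eigenvalues removed.

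First, fix a nonsingular principal submatrix $B[\alpha]$ of order $r$. It is Hermitian and nonsingular, so its eigenvalues are real and nonzero; write its inertia as $(p_\alpha, q_\alpha, 0)$ with $p_\alpha + q_\alpha = r$. Because $\det(B[\alpha])$ is the product of the eigenvalues of $B[\alpha]$, we have $\operatorname{sgn}(\det B[\alpha]) = (-1)^{q_\alpha}$, and it is therefore enough to prove that $q_\alpha$ is independent of $\alpha$.

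Next, denote the eigenvalues of $B$ and of $B[\alpha]$ in nonincreasing order by $\lambda_1(B) \ge \cdots \ge \lambda_n(B)$ and $\lambda_1(B[\alpha]) \ge \cdots \ge \lambda_r(B[\alpha])$, respectively, and write the inertia of $B$ as $(p,q,n-r)$, so that $p + q = r$. Cauchy's interlacing theorem, applied to the principal submatrix $B[\alpha]$ obtained by deleting $n-r$ rows and the corresponding columns, yields
\[
\lambda_i(B) \;\ge\; \lambda_i(B[\alpha]) \;\ge\; \lambda_{i+(n-r)}(B) \qquad \text{for every } i \in [r].
\]
The left-hand inequalities force at most $p$ of the $\lambda_i(B[\alpha])$ to be positive, while the right-hand inequalities force at most $q$ of them to be negative. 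Since none is zero and their total count is $p + q$, both upper bounds must be attained, giving $p_\alpha = p$ and $q_\alpha = q$. Consequently $\operatorname{sgn}(\det B[\alpha]) = (-1)^q$ for every nonsingular principal submatrix of order $r$, which is exactly the claim.

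The only potential obstacle I foresee is invoking Cauchy interlacing in the precise form just used, that is, for Hermitian (not merely real symmetric) matrices and for simultaneous deletion of $n-r$ rows and columns rather than one at a time. This is not a real difficulty: the complex Hermitian case follows from the Courant--Fischer minimax characterization exactly as in the real case (all eigenvalues are still real), and the multi-step version follows by iterating the single-deletion statement $n-r$ times. Once that is in hand, the argument above goes through verbatim.
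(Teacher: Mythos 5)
Your proof is correct. Note that the paper itself offers no argument for this lemma: it is stated as a well-known fact with a pointer to Zhang's \emph{Matrix Theory}, so there is no in-paper proof to compare against. Your interlacing argument is a legitimate and complete way to supply one: the reduction to showing that every nonsingular order-$r$ principal submatrix has inertia $(p,q,0)$ is sound, the counting from the two sides of the Cauchy inequalities is carried out correctly (the left inequalities cap the positives at $p$, the right ones cap the negatives at $q$, and nonsingularity plus $p_\alpha+q_\alpha=r=p+q$ forces equality), and the multi-deletion form of interlacing for Hermitian matrices is indeed standard, obtainable by iteration or directly from Courant--Fischer. For comparison, the textbook route to this fact is usually via congruence rather than interlacing: if $B[\alpha]$ is a nonsingular $r\times r$ principal submatrix of a rank-$r$ Hermitian $B$, then $B$ is $*$-congruent to $B[\alpha]\oplus O_{n-r}$, so Sylvester's law of inertia gives the same conclusion that every such submatrix inherits the inertia $(p,q,0)$ of $B$. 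That argument avoids eigenvalue estimates altogether, but yours is no less rigorous and arrives at exactly the same sign formula $(-1)^{q}$.
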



The following observation is exploited here as well.

\begin{obs}\label{odd terms obs}
{\rm \cite[Observation 2.2]{XMR-sepr}}
Let $B \in \Cnn$ be Hermitian, and
let $j$ be an integer with $1 \leq j \leq n$.
\begin{enumerate}
\item If $j$ is even,
then $[\sepr(-B)]_{j} = [\sepr(B)]_{j}$.
\item If $j$ is odd, then
$[\sepr(-B)]_{j}$ = $\negat( [\sepr(B)]_{j})$.
\end{enumerate}
\end{obs}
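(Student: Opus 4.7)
The plan is to reduce everything to the elementary identity $\det(-M) = (-1)^k \det(M)$ for a $k \times k$ matrix $M$, and then read off how each possible value of the $j$th sepr-term is affected.

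First, I would fix an integer $j$ with $1 \leq j \leq n$ and note that, for any $\alpha \subseteq [n]$ with $|\alpha| = j$, one has $(-B)[\alpha] = -B[\alpha]$, so
\[
\det\bigl((-B)[\alpha]\bigr) = (-1)^j \det(B[\alpha]).
\]
For statement (1), when $j$ is even, $(-1)^j = 1$, so every order-$j$ principal minor of $-B$ equals the corresponding order-$j$ principal minor of $B$. In particular, the multiset of order-$j$ principal minors is unchanged, hence the criterion selecting $[\sepr(-B)]_j$ from $\{\tt A^*, A^+, A^-, N, S^*, S^+, S^-\}$ returns the same symbol as for $B$.

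For statement (2), when $j$ is odd, $(-1)^j = -1$, so each order-$j$ principal minor of $-B$ is the negative of the corresponding minor of $B$. I would then verify, case by case through the seven possible values of $[\sepr(B)]_j$, that this termwise sign flip sends the symbol to its negative as defined just before the observation. The cases $\tt A^+ \leftrightarrow A^-$ and $\tt S^+ \leftrightarrow S^-$ follow because a collection of (non)positive numbers becomes a collection of (non)negative numbers after negation, while leaving the pattern of zeros intact; and $\tt A^*$, $\tt S^*$, and $\tt N$ are invariant because their defining properties (mixed signs among nonzero entries, or all entries zero) are manifestly preserved under termwise negation, which matches the fact that $\negat$ fixes these three symbols.

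The main obstacle here is essentially nil: this is a routine case analysis driven by a single sign-of-determinant identity. The only point requiring any attention is confirming that $\tt A^*$, $\tt S^*$, and $\tt N$ are indeed fixed by $\negat$, which is immediate from the definition of $\negat$ as flipping only the superscripts $+$ and $-$.
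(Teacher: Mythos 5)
Your proof is correct, and since the paper simply quotes this observation from \cite{XMR-sepr} without reproving it, there is nothing to diverge from: the argument via $\det(-M)=(-1)^k\det(M)$ applied to each principal submatrix, followed by the seven-case check that termwise negation of the order-$j$ minors realizes exactly the $\negat$ operation on symbols, is the standard (and essentially only) proof. No gaps.
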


Observation \ref{odd terms obs} helps us minimize the number of matrix examples that have to be provided to accomplish our objective.
For example, if $\tt S^+S^-A^-A^-$ is attainable by  some Hermitian matrix $B$ (i.e., $\sepr(B) = \tt S^+S^-A^-A^-$), then, by Observation \ref{odd terms obs}, so is $\tt S^-S^-A^+A^-$, as $\sepr(-B) = \tt S^-S^-A^+A^-$.

The following theorem is the reason why the sequences $\tt NNA$ and $\tt NNS$ are forbidden  
(see Theorem \ref{NNA NNS NSA}).

\begin{thm}
{\rm ($\tt NN$ Theorem.)}
Let $B \in \Cnn$ be Hermitian, 
$\epr(B) = \ell_1 \ell_2 \cdots \ell_n$ and
$\sepr(B) = t_1t_2 \cdots t_n$.
\begin{enumerate}
\item[$(i)$] 
{\rm \cite[Theorem 2.3]{EPR}} 
If $\ell_{k} = \ell_{k+1} = \tt N$ for some $k$, 
then,  for all $j \geq k$, $\ell_j = \tt N$.

\item [$(ii)$] 
{\rm \cite[Theorem 2.3]{XMR-sepr}}
If $t_k = t_{k+1} = \tt{N}$ for some $k$,
then,  for all $j \geq k$, $t_j = \tt{N}$.
\end{enumerate}
\end{thm}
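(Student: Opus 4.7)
The plan is to establish part (i) by induction on the index and then obtain part (ii) as an immediate linguistic consequence. For (i), it suffices to prove the one-step implication: \emph{if $\ell_{j-1} = \ell_j = \tt N$, then $\ell_{j+1} = \tt N$}. Applying this repeatedly, starting from the given pair $\ell_k = \ell_{k+1} = \tt N$, propagates $\tt N$ through the tail of the sequence and yields the full conclusion; the pair $(\ell_{k+1},\ell_{k+2}) = (\tt N, \tt N)$ produced by the first step feeds the second step, and so on.

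To prove the step, I would argue by contradiction. Suppose some order-$(j+1)$ principal minor of $B$ is nonzero, so that there exists $\alpha \subseteq [n]$ with $|\alpha| = j+1$ for which $M := B[\alpha]$ is nonsingular; in particular $\rank M = j+1$. Every order-$j$ principal submatrix of $M$ is also an order-$j$ principal submatrix of $B$, so by $\ell_j = \tt N$ it is singular, hence has rank at most $j-1$. On the other hand, Lemma~\ref{rank when deleting} applied to $M$ forces its rank to be at least $\rank M - 2 = j-1$, so the rank of each such submatrix is exactly $j-1$. Pick one such order-$j$ principal submatrix; it is itself Hermitian, so the ``rank is principal'' theorem produces a nonsingular principal submatrix of it of order $j-1$. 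But any principal submatrix of a principal submatrix of $B$ is itself a principal submatrix of $B$, contradicting $\ell_{j-1} = \tt N$.

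For (ii), I would simply read off underlying sequences. The definitions of the epr- and sepr-sequences make $t_k = \tt N$ equivalent to $\ell_k = \tt N$, since both assertions say exactly that every order-$k$ principal minor of $B$ vanishes. Thus $t_k = t_{k+1} = \tt N$ is the same hypothesis as $\ell_k = \ell_{k+1} = \tt N$, and applying (i) yields $\ell_j = \tt N$, hence $t_j = \tt N$, for all $j \geq k$.

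The only delicate point is the interplay between Lemma~\ref{rank when deleting} and the ``rank is principal'' theorem: the lemma keeps the rank of an order-$j$ principal submatrix of $M$ from dropping below $j-1$, while ``rank is principal'' upgrades that lower bound to the genuine existence of an order-$(j-1)$ nonsingular principal submatrix of $B$. I do not anticipate any real obstacle beyond priming the induction correctly so that after each step a fresh consecutive $\tt N N$ pair is available to drive the next step.
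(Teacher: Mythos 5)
Your proof is correct. The paper itself gives no proof of this theorem---it is quoted from \cite[Theorem 2.3]{EPR} and \cite[Theorem 2.3]{XMR-sepr}---but your argument is a sound self-contained derivation using exactly the two tools the paper supplies (Lemma \ref{rank when deleting} and the fact that the rank of a Hermitian matrix is principal), it correctly handles the induction (each step manufactures the consecutive $\tt NN$ pair needed for the next, and $j-1\ge k\ge 1$ keeps the order-$(j-1)$ contradiction meaningful), and part (ii) is indeed immediate since $t_k=\tt N$ and $\ell_k=\tt N$ are definitionally equivalent; this is essentially the argument in the cited source.
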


The following theorem is a consequence of Jacobi’s determinantal identity.

\begin{thm}
{\rm \cite[Theorem 2.4]{XMR-sepr} (Inverse Theorem.)}
Let $B \in \Cnn$ be nonsingular and Hermitian.
\begin{enumerate}
\item [$(i)$] 
If $\sepr(B) = t_1t_2 \cdots t_{n-1}\tt{A^+}$, then
$\sepr(B^{-1}) = t_{n-1}t_{n-2} \cdots t_1 \tt{A^+}$.

\item [$(ii)$] 
If $\sepr(B) = t_1t_2 \cdots t_{n-1}\tt{A^-}$, then
$\sepr(B^{-1})=
\negat(t_{n-1}t_{n-2} \cdots t_1)\tt{A^-}$.
\end{enumerate}
\end{thm}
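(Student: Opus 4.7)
The plan is to read off both statements from Jacobi's determinantal identity applied to principal submatrices of the nonsingular Hermitian matrix $B$. Recall that Jacobi's identity, when specialized to equal row- and column-index sets, asserts that for every $\alpha \subseteq [n]$ with $1 \leq |\alpha| \leq n-1$, the sign factor $(-1)^{\sigma(\alpha)+\sigma(\alpha)}$ equals $1$, so
\[
\det\bigl(B^{-1}[\alpha]\bigr) = \frac{\det(B[\alpha^c])}{\det(B)},
\]
where $\alpha^c := [n]\setminus\alpha$. Thus, as $\alpha$ ranges over all $k$-subsets of $[n]$, the complements $\alpha^c$ range over all $(n-k)$-subsets, and the order-$k$ principal minors of $B^{-1}$ are precisely the order-$(n-k)$ principal minors of $B$ scaled by the constant $1/\det(B)$. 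This fact is the only non-trivial ingredient.

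For part $(i)$, the assumption $t_n = {\tt A^+}$ means $\det(B) > 0$, so the scaling constant $1/\det(B)$ is positive. Multiplication by a positive constant preserves both zeros and signs; hence, for each $k \in [n-1]$, the multiset of signs of order-$k$ principal minors of $B^{-1}$ coincides with that of order-$(n-k)$ principal minors of $B$. Consulting each of the seven classification rules for a term of the sepr-sequence (the seven cases $\tt A^*, A^+, A^-, N, S^*, S^+, S^-$), one sees that each classification depends only on that sign multiset, so $[\sepr(B^{-1})]_k = t_{n-k}$. Finally, $\det(B^{-1}) = 1/\det(B) > 0$, so $[\sepr(B^{-1})]_n = \tt A^+$, yielding the desired reversed sequence.

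For part $(ii)$, the assumption $t_n = {\tt A^-}$ gives $\det(B) < 0$, so the scaling constant is negative. Now multiplication by a negative constant preserves zeros but flips signs of nonzero entries. Checking the seven classification rules against this sign-flipping yields exactly the action of $\negat(\cdot)$: symbols $\tt A^*, N, S^*$ are fixed, while the pairs $\tt A^+ \leftrightarrow A^-$ and $\tt S^+ \leftrightarrow S^-$ are swapped. Hence $[\sepr(B^{-1})]_k = \negat(t_{n-k})$ for $k \in [n-1]$, while $\det(B^{-1}) < 0$ forces $[\sepr(B^{-1})]_n = \tt A^-$.

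There is no real obstacle beyond the clean bookkeeping above. The most delicate step is simply verifying case-by-case on the seven symbols that the sepr-classification is invariant under positive scaling and equivariant (via $\negat$) under negative scaling; this is immediate from the definition but is the only place one might slip. Because the argument is symmetric in the roles of $B$ and $B^{-1}$ (noting that $B$ and $B^{-1}$ are both Hermitian and that $(B^{-1})^{-1} = B$), no further verification is required.
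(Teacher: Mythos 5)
Your proof is correct and follows exactly the route the paper indicates: this theorem is quoted from \cite[Theorem 2.4]{XMR-sepr} with only the remark that it is a consequence of Jacobi's determinantal identity, and your argument is precisely the fleshed-out version of that remark, using $\det(B^{-1}[\alpha]) = \det(B[\alpha^c])/\det(B)$ together with the sign of $\det(B)$ to track how each of the seven sepr symbols transforms. No gaps.
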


The proofs of the items in the following theorem for which no reference is provided are trivial and, therefore, omitted here.

\begin{thm}\label{Inheritance}
{\rm (Inheritance Theorem.)}
Let $B \in \Cnn$ be Hermitian.
Let $m$ and $j$ be integers with $1\le j \le m \leq n$.
\begin{enumerate}
\item  {\rm \cite[Theorem 2.7 (1)]{XMR-sepr}} If $[\sepr(B)]_j={\tt N}$, then  $[\sepr(C)]_j={\tt N}$, for all $m\times m$ principal submatrices $C$.
\item {\rm \cite[Theorem 2.7 (2)]{XMR-sepr}} If  $[\sepr(B)]_j={\tt A^+}$, then  $[\sepr(C)]_j={\tt A^+}$, for all $m\times m$ principal submatrices $C$.
\item {\rm \cite[Theorem 2.7 (3)]{XMR-sepr}} If  $[\sepr(B)]_j={\tt A^-}$, then  $[\sepr(C)]_j={\tt A^-}$, for all $m\times m$ principal submatrices $C$.
\item  If $[\sepr(B)]_j= \tt S^+$, then  
$[\sepr(C)]_j \in \{\tt A^+,N, S^+\}$, for all $m\times m$ principal submatrices $C$.

\item  If $[\sepr(B)]_j= \tt S^-$, then  
$[\sepr(C)]_j \in \{\tt A^-,N, S^-\}$, for all $m\times m$ principal submatrices $C$.
\end{enumerate}
\end{thm}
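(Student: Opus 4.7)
The proof plan rests on a single elementary observation: if $C = B[\alpha]$ is an $m \times m$ principal submatrix of $B$ with $\alpha \subseteq [n]$ and $|\alpha|=m$, then every $j \times j$ principal submatrix of $C$ is of the form $C[\gamma] = B[\gamma]$ for some $\gamma \subseteq \alpha$ with $|\gamma|=j$. Hence the multiset of order-$j$ principal minors of $C$ is a subset of the multiset of order-$j$ principal minors of $B$. Items (1)--(3) have already been established in the cited references, and the argument I would give is the same monotonicity observation: a property shared by \emph{all} order-$j$ principal minors of $B$ (being zero, being positive, or being negative, respectively) is automatically shared by all order-$j$ principal minors of $C$.

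For item (4), assume $[\sepr(B)]_j = {\tt S^+}$. Then every order-$j$ principal minor of $B$ is nonnegative (with both a zero and a positive one appearing in $B$, though this latter information is not needed for the conclusion about $C$). By the subset observation, every order-$j$ principal minor of $C$ is nonnegative. I would then split into cases according to the definition of the sepr-sequence: if every such minor equals $0$, then $[\sepr(C)]_j = {\tt N}$; if every such minor is strictly positive, then $[\sepr(C)]_j = {\tt A^+}$; and if both a zero minor and a positive minor occur among the order-$j$ principal minors of $C$, then $[\sepr(C)]_j = {\tt S^+}$. In each case $[\sepr(C)]_j \in \{{\tt A^+}, {\tt N}, {\tt S^+}\}$, as required.

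Item (5) can be handled in two equivalent ways. The quickest route is to repeat the argument of item (4) verbatim with ``nonpositive'' replacing ``nonnegative'' and ${\tt A^-}, {\tt S^-}$ replacing ${\tt A^+}, {\tt S^+}$. Alternatively, one can invoke Observation~\ref{odd terms obs} applied to $-B$: since $-B$ is Hermitian and $[\sepr(-B)]_j$ is the negative of $[\sepr(B)]_j$ (when $j$ is odd) or equal to it (when $j$ is even), the statement for ${\tt S^-}$ reduces to the statement for ${\tt S^+}$ already proved in item (4). There is no real obstacle here; the content is purely bookkeeping once one notices that principal submatrices inherit their principal minors from the ambient matrix.
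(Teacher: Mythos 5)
Your proof is correct and is precisely the argument the paper has in mind: the paper cites items (1)--(3) from earlier work and explicitly omits the proofs of items (4) and (5) as trivial, the intended reasoning being exactly your observation that the order-$j$ principal minors of a principal submatrix form a subcollection of those of $B$, followed by the case split on the definition of the sepr-sequence.
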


Certain sequences do not occur as initial subsequences.

\begin{prop}\label{basic}{\rm \cite[Proposition 3.1]{XMR-sepr} (Basic Proposition.)}
None of the following sequences occurs as an initial subsequence of the sepr-sequence of any Hermitian matrix:
\vspace{-2mm}
\[
\tt A^*A^+, \
\tt A^*N,      \
\tt A^*S^+, \
\tt NA^*, \
\tt NA^+, \ 
\tt NS^*, \
\tt NS^+, \
\tt S^*A^+, \
\tt S^*N,      \
\tt S^*S^+, \
\tt S^+A^+ \mbox{  and \ }  
\tt S^-A^+.
\]
\end{prop}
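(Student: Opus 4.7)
The proof plan is to exploit the fact that for any Hermitian $B \in \Cnn$, each $2 \times 2$ principal minor has the explicit form $\det B[\{i,j\}] = b_{ii}b_{jj} - |b_{ij}|^2$, and to split the analysis into three cases according to the value of $t_1 = [\sepr(B)]_1$, which is determined entirely by the signs of the diagonal entries of $B$.

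\textbf{Case 1: }$t_1 \in \{\tt A^*, S^*\}$. In both situations there exist indices $i$ and $j$ with $b_{ii} > 0$ and $b_{jj} < 0$, so
\[
\det B[\{i,j\}] \;=\; b_{ii}b_{jj} - |b_{ij}|^2 \;<\; 0.
\]
Hence the second term of $\sepr(B)$ cannot be any of $\tt A^+$, $\tt N$, or $\tt S^+$, which rules out the six sequences $\tt A^*A^+, \tt A^*N, \tt A^*S^+, \tt S^*A^+, \tt S^*N, \tt S^*S^+$.

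\textbf{Case 2: }$t_1 = \tt N$. Then $b_{ii} = 0$ for every $i$, so for all $i \neq j$,
\[
\det B[\{i,j\}] \;=\; -|b_{ij}|^2 \;\leq\; 0.
\]
Thus $B$ has no strictly positive $2 \times 2$ principal minor, so $t_2$ cannot be any of $\tt A^*$, $\tt A^+$, $\tt S^*$, or $\tt S^+$, eliminating the four sequences $\tt NA^*, \tt NA^+, \tt NS^*, \tt NS^+$.

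\textbf{Case 3: }$t_1 \in \{\tt S^+, \tt S^-\}$. In either case some diagonal entry is zero, say $b_{ii} = 0$, and then $\det B[\{i,j\}] = -|b_{ij}|^2 \leq 0$ for every $j \neq i$. Therefore $B$ has a non-positive $2 \times 2$ principal minor, so $t_2 \neq \tt A^+$, covering the remaining sequences $\tt S^+A^+$ and $\tt S^-A^+$. Altogether the three cases handle all twelve listed sequences. There is no real obstacle here: the argument is entirely a bookkeeping exercise on the sign of $b_{ii}b_{jj} - |b_{ij}|^2$, and the main point is to verify that every one of the twelve forbidden initial pairs falls into one of the three cases.
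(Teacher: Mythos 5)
Your argument is correct: the identity $\det B[\{i,j\}] = b_{ii}b_{jj} - |b_{ij}|^2$ (valid since $B$ is Hermitian, with real diagonal) together with the three cases on $t_1$ cleanly covers all twelve listed pairs, and each deduction about $t_2$ follows from the definitions of the sepr symbols. Note that the paper itself gives no proof of this proposition --- it is quoted verbatim from \cite[Proposition 3.1]{XMR-sepr} as a known fact --- so there is nothing in this paper to compare against; your elementary sign-bookkeeping on $2\times 2$ principal minors is exactly the kind of argument the cited source uses.
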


This section concludes with a fact that is concerned with real symmetric matrices.

\begin{thm}\label{SNA}
{\rm \cite[Theorem 2.7]{XMR-Classif}}
Let $B \in \Rnn$ be symmetric and
$\epr(B) = \ell_1\ell_2 \cdots \ell_n$.
Then $\tt{SNA}$ is not a subsequence of
$\ell_1\ell_2 \cdots \ell_{n-2}$.
\end{thm}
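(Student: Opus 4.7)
The plan is by contradiction and, when available, induction on $n$. Suppose $\epr(B) = \ell_1 \cdots \ell_n$ contains $\ell_k \ell_{k+1} \ell_{k+2} = \tt SNA$ with $k + 2 \le n - 2$, so that $n \ge k + 4$. The argument combines inheritance (passing to principal submatrices of order $k+3$), Jacobi's determinantal identity (relating the epr-sequence of a nonsingular submatrix to that of its inverse), and the list of forbidden real-symmetric patterns $\tt NAN$, $\tt NAS$, $\tt NNA$, $\tt NNS$, $\tt NSA$ from Theorem \ref{NAN-NAS theorem}.

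The first step is to extend $\tt SNA$ to $\tt SNAA$. For any order-$(k+3)$ principal submatrix $C$ of $B$, the (epr-analogue of the) Inheritance Theorem yields $\ell_{k+1}(C) = \tt N$ and $\ell_{k+2}(C) = \tt A$; the unique order-$(k+3)$ principal minor of $C$ is either nonzero or zero, and $\tt NAN$ being forbidden forces $\ell_{k+3}(C) = \tt A$. Hence $\ell_{k+3}(B) = \tt A$ and every order-$(k+3)$ principal submatrix of $B$ is nonsingular. Fix one such submatrix $N = B[\alpha \cup \{i\}]$ and apply Jacobi:
\begin{equation*}
\epr(N^{-1}) \;=\; {\tt A}\, {\tt N}\, \ell_k(N)\, \ell_{k-1}(N)\, \cdots\, \ell_1(N)\, {\tt A}.
\end{equation*}
If $\ell_k(N) = \tt N$, then positions $2,3$ of $\epr(N^{-1})$ are both $\tt N$, and the $\tt NN$ Theorem forces every later term to equal $\tt N$, contradicting the terminal $\tt A$. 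Thus $\ell_k(N)$ must be $\tt A$ or $\tt S$ for every such $N$. Combining this with $\ell_k(B) = \tt S$ (which yields a $k$-subset $\gamma$ with $\det B[\gamma] = 0$, enlargeable since $n \ge k + 4$ to a $(k+3)$-subset $\delta$) produces at least one $N$ with $\ell_k(N) = \tt S$.

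The base case $k = 1$ closes directly: $\epr(N^{-1}) = {\tt A}\, {\tt N}\, \ell_1(N)\, {\tt A}$ has only four terms, and its tail triple ${\tt N}\, \ell_1(N)\, {\tt A}$ forbids $\ell_1(N)$ from being $\tt N$ or $\tt S$ (via $\tt NNA$ and $\tt NSA$), forcing $\ell_1(N) = \tt A$ for every order-$4$ principal submatrix $N$ of $B$. Since $n \ge 5$, every index $j \in [n]$ lies in some such $N$, whence $B_{jj} \neq 0$ for every $j$; i.e., $\ell_1(B) = \tt A$, contradicting $\ell_1(B) = \tt S$. For $k \ge 2$ I would induct on $n$: assuming the theorem holds in orders less than $n$, and provided some diagonal entry $B_{jj}$ is nonzero, the Schur complement $C = B / B_{jj}$ (of order $n - 1$) satisfies $\ell_k(C) = \tt N$ and $\ell_{k+1}(C) = \tt A$ by Jacobi. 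A combinatorial argument, using that when $n \ge k + 4$ the zero- and nonzero-minor $k$-subsets of $B$ cannot be simultaneously avoided by removing a single diagonal-nonzero index, produces $j$ for which additionally $\ell_{k-1}(C) = \tt S$. Then $\epr(C)$ contains $\tt SNA$ at position $k - 1 \le (n-1) - 4$, contradicting the induction hypothesis.

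The main obstacle is twofold. First, rigorously exhibiting the ``good'' Schur-complement index in the inductive step requires a careful partition argument on the supports of the zero- and nonzero-minor $k$-subsets, leveraging the fact that $\ell_{k+1}(B) = \tt N$ and $\ell_{k+2}(B) = \tt A$ jointly impose stringent constraints on $B$ (in particular, by Jacobi, the inverse of every $(k+2)$-principal submatrix has zero diagonal, forcing indefiniteness). Second, the subcase $\ell_1(B) = \tt N$---where every diagonal entry of $B$ vanishes, precluding any single-index Schur complement---demands a separate direct treatment: the hypotheses force a triangle-free-type sparsity pattern on the off-diagonal support of $B$, and a graph-theoretic argument analogous to those classifying induced matchings is then needed to contradict $\ell_{k+2}(B) = \tt A$.
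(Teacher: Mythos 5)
The paper does not actually prove this statement; it is imported verbatim from \cite[Theorem 2.7]{XMR-Classif}, so there is no in-paper argument to compare against, and your proposal must be judged on its own merits. The parts you carry out in full are sound: inheritance together with the forbidden pattern $\tt NAN$ gives $\ell_{k+3}={\tt A}$ and the nonsingularity of every order-$(k+3)$ principal submatrix $N$; Jacobi then yields $\epr(N^{-1})={\tt A}\,{\tt N}\,\ell_k(N)\cdots\ell_1(N)\,{\tt A}$; and for $k=1$ the tail triple ${\tt N}\,\ell_1(N)\,{\tt A}$ together with the forbidden patterns $\tt NNA$ and $\tt NSA$ forces $\ell_1(N)={\tt A}$ for every order-$4$ principal submatrix, hence every diagonal entry of $B$ is nonzero, contradicting $\ell_1(B)={\tt S}$. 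That base case is a complete and correct argument.

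The inductive step for $k\ge 2$, however, is not a proof: its two essential ingredients are precisely the items you yourself label ``the main obstacle,'' and neither is supplied. First, you need an index $j$ with $B_{jj}\neq 0$ lying in some singular order-$k$ principal submatrix, so that $\ell_{k-1}(B/B_{jj})\neq{\tt A}$ (after which either $\tt NN$ at positions $k-1,k$ contradicts $\ell_{k+1}(B/B_{jj})={\tt A}$, or the induction hypothesis applies to the resulting $\tt SNA$). Nothing you say rules out the configuration in which every vanishing order-$k$ principal minor of $B$ is supported entirely inside $\{i:B_{ii}=0\}$; the one-sentence ``combinatorial argument'' does not engage with this, and it is not clear the needed index exists without substantial further input about the structure forced by $\ell_{k+1}={\tt N}$, $\ell_{k+2}={\tt A}$. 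Second, the case $\ell_1(B)={\tt N}$, where no $1\times 1$ Schur complement is available at all, is deferred wholesale to an unspecified ``graph-theoretic argument.'' For $k\ge 2$ these two situations are the entire content of the theorem, so as written the proposal establishes only the $k=1$ case. A further small point: the observation that some $N$ has $\ell_k(N)={\tt S}$ is a dead end as deployed, since the resulting $\tt SNA$ in $\epr(N)$ ends at position $(k+3)-1$, outside the range in which the statement (or any inductive hypothesis) applies.
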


\section{Preliminary results}\label{s: lemmas}
$\null$
\indent
The purpose of this section is to present preliminary
results that are necessary to establish our main results.
In particular, those preliminary results are used to demonstrate that certain sequences are \textit{not} forbidden.

The following observation is an analogous result for sepr-sequences of  \cite[Observation 2.19 (2)]{EPR}.

\begin{obs}\label{Append Zero}
Let $B \in \Cnn$ be  Hermitian and 
$\sepr(B)= t_1t_2 \cdots t_n$.
Then \ $\sepr(B \oplus O_1) = t'_1t'_2 \cdots t'_n{\tt N}$,
where, for $j=1,2, \dots, n$, all of the following statements hold:
If $t_j \in \{\tt N,S^*,S^+,S^-\}$,
then $t'_j = t_j$;
if $t_j = \tt A^*$, then $t'_j = \tt S^*$;
if $t_j = \tt A^+$, then $t'_j = \tt S^+$;
if $t_j = \tt A^-$, then $t'_j = \tt S^-$.
\end{obs}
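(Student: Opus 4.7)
The plan is to reduce everything to a direct comparison of the multisets of order-$j$ principal minors of $B$ and of $B \oplus O_1$, and then to case-split on the value of $t_j$ to read off $t'_j$.

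First I would describe the order-$j$ principal submatrices of $M := B \oplus O_1$ explicitly. For each $\alpha \subseteq [n+1]$ with $|\alpha|=j$ there are two cases: (i) $n+1 \notin \alpha$, in which case $M[\alpha] = B[\alpha]$; and (ii) $n+1 \in \alpha$, in which case $\alpha = \beta \cup \{n+1\}$ for some $\beta \subseteq [n]$ with $|\beta|=j-1$, and $M[\alpha] = B[\beta] \oplus O_1$, which has a zero row and column, so $\det(M[\alpha]) = 0$. Consequently, the multiset of order-$j$ principal minors of $M$ is obtained from the multiset of order-$j$ principal minors of $B$ by adjoining $\binom{n}{j-1} \ge 1$ copies of $0$. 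This observation is the whole engine of the proof.

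Next I would handle $j=n+1$ separately: the only order-$(n+1)$ principal minor of $M$ is $\det(M) = \det(B)\cdot 0 = 0$, so the last term of $\sepr(M)$ is $\mathtt{N}$, as claimed. For $1 \le j \le n$, I would case-split on $t_j$ and apply the definition of sepr-sequence to the enlarged multiset. If $t_j \in \{\mathtt{N},\mathtt{S^*},\mathtt{S^+},\mathtt{S^-}\}$, the multiset of order-$j$ minors of $B$ already contains a zero, so adjoining further zeros preserves the classification and $t'_j = t_j$. If $t_j = \mathtt{A^+}$ (respectively $\mathtt{A^-}$, $\mathtt{A^*}$), then all order-$j$ minors of $B$ are nonzero and of a single sign (respectively a single sign, both signs), so after adjoining zeros the multiset contains both a zero and a nonzero minor, with the nonzero ones all positive (respectively negative, of both signs); this gives $t'_j = \mathtt{S^+}$ (respectively $\mathtt{S^-}$, $\mathtt{S^*}$), matching the statement.

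Since every case of the definition is covered and the matches are forced by the explicit description of the minor multiset of $M$, the proof is complete. The only mild subtlety to flag is ensuring that at least one type-(ii) submatrix exists for each $j \in [n]$, which follows from $\binom{n}{j-1} \ge 1$; this is why the zero does get appended in every row of the sequence and why an $\mathtt{A}$-type entry upgrades strictly to an $\mathtt{S}$-type entry rather than remaining $\mathtt{A}$. There is no real obstacle here — the result is essentially bookkeeping, with the nontrivial ingredient being the simple rank-zero block structure of $M$.
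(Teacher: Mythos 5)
Your argument is correct and is exactly the bookkeeping the paper has in mind: the paper states this as an Observation without proof (citing the epr-sequence analogue), and the intended justification is precisely your description of the order-$j$ principal minors of $B \oplus O_1$ as those of $B$ together with $\binom{n}{j-1}\ge 1$ additional zeros, followed by the case split on $t_j$. Nothing is missing; your flag about needing at least one type-(ii) submatrix for each $j\in[n]$ is the right detail to check.
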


All the sequences that appear in the statement of the following fact are not only not forbidden, but also not forbidden over $\R$.

\begin{lem}\label{SSS etc}
Let $\sigma$ be any of the following sequences:
\begin{multicols}{4}
\begin{enumerate}
\item $\tt S^+NS^-$.
\item $\tt S^*S^+N$. \label{S... 113}
\item $\tt S^*S^*S^+$.
\item $\tt S^*S^*S^-$.
\item $\tt S^*S^-S^+$.
\item $\tt S^*S^-S^-$.
\item $\tt S^+S^*S^-$.
\item $\tt S^+S^+S^+$.
\item $\tt S^+S^+S^-$.
\item $\tt S^+S^-S^+$.
\item $\tt S^+S^-S^-$.
\item $\tt S^-S^*S^+$.
\item $\tt S^-S^+S^+$.
\item $\tt S^-S^+S^-$.
\item $\tt S^-S^-S^+$.
\item $\tt S^-S^-S^-$.
\end{enumerate}
\end{multicols}
\noindent
Then there exists a $4 \times 4$ real symmetric matrix $B$ such that $\sigma$ is a subsequence of $\sepr(B)$. 
\end{lem}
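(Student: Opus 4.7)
The proof is by explicit construction: for each of the $16$ sequences $\sigma$ I would exhibit a $4\times 4$ real symmetric matrix $B$ whose sepr-sequence contains $\sigma$ as a contiguous length-$3$ subsequence, and verify each claim by direct computation of principal minors. Two symmetry reductions will cut the bookkeeping substantially.

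First, Observation \ref{odd terms obs} gives a negation symmetry: if $B$ realizes $\sigma$ at positions $(1,2,3)$ of $\sepr(B)$, then $-B$ realizes, at those same positions, the sequence obtained from $\sigma$ by applying $\negat$ to $\sigma_1$ and $\sigma_3$ while fixing $\sigma_2$. Running through the sixteen sequences produces the pairings $(3,4)$, $(5,6)$, $(7,12)$, $(8,14)$, $(9,13)$, $(10,16)$, and $(11,15)$, while sequences $1$ and $2$ are unpaired within the list (sequence $1$'s negation image $\tt S^-NS^+$ is not in the list, and sequence $2$ is self-paired since $\negat(\tt S^*)=\tt S^*$ and $\negat(\tt N)=\tt N$). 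Hence it suffices to construct matrices realizing the nine representatives $\tt S^+NS^-$, $\tt S^*S^+N$, $\tt S^*S^*S^+$, $\tt S^*S^-S^+$, $\tt S^+S^*S^-$, $\tt S^+S^+S^+$, $\tt S^+S^+S^-$, $\tt S^+S^-S^+$, and $\tt S^+S^-S^-$.

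Most representatives yield to the direct-sum construction $B=C\oplus O_1$: if $C$ is a $3\times 3$ real symmetric matrix with a known sepr-sequence, Observation \ref{Append Zero} determines $\sepr(B)$ by replacing each $\tt A^\pm$ with $\tt S^\pm$, $\tt A^*$ with $\tt S^*$, leaving the rest fixed, and appending a final $\tt N$. Diagonal and near-diagonal choices then suffice for several targets: $\diag(1,1,1)\oplus O_1$ has sepr $\tt S^+S^+S^+N$ (sequence $8$); the rank-$3$ matrix $C$ with diagonal $(1,1,1)$ and off-diagonals $(a_{12},a_{13},a_{23})=(1,1,-1)$ has all $2\times 2$ principal minors equal to $0$ yet $\det C=-4$, so $\sepr(C)=\tt A^+NA^-$ and the lift produces $\tt S^+NS^-N$ (sequence $1$); the matrix $\begin{pmatrix}1&2&2\\2&1&2\\2&2&1\end{pmatrix}$ has sepr $\tt A^+A^-A^+$ and lifts to sequence $10$; and analogous $3\times 3$ constructions with appropriate off-diagonal magnitudes (for example, $\begin{pmatrix}1&1&1\\1&1&0\\1&0&1\end{pmatrix}$ for sequence $9$) realize the remaining $\tt S^+$-headed targets. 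Sequence $3$ appears at positions $(1,2,3)$ of $\sepr(\diag(1,-1,-1,0))=\tt S^*S^*S^+N$, and this same sepr-sequence simultaneously witnesses sequence $2$ at positions $(2,3,4)$.

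The main obstacle is sequence $5$ ($\tt S^*S^-S^+$): the term $\tt S^*$ at position $1$ requires the diagonal to contain both positive and negative nonzero entries (together with a zero), while the term $\tt S^-$ at position $2$ demands every $2\times 2$ principal minor $a_{ii}a_{jj}-a_{ij}^2$ to be nonpositive, ruling out diagonal and purely block-diagonal constructions built from small pieces of fixed sign. I would resolve this with the non-diagonal block matrix $B=\begin{pmatrix}1&2\\2&1\end{pmatrix}\oplus\diag(-1,0)$: the $\{1,2\}$ principal minor equals $-3$, every other $2\times 2$ principal minor equals $-1$ or $0$, the $\{1,2,3\}$ principal minor equals $+3$, and every other order-$3$ minor vanishes, yielding $\sepr(B)=\tt S^*S^-S^+N$. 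A similar non-diagonal perturbation (for instance, the matrix with diagonal $(1,1,0,0)$ and a single nonzero off-diagonal $a_{13}$ linking a positive diagonal to a zero one) handles sequence $7$ ($\tt S^+S^*S^-$). Once the nine representative matrices are in place and checked, applying the negation reduction completes the verification for all $16$ sequences.
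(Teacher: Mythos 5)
Your proposal is correct and rests on the same key mechanism as the paper's proof, namely Observation \ref{Append Zero}: append a zero row and column to a $3\times 3$ real symmetric seed whose sepr-sequence converts, term by term, into the target (the paper obtains its seeds by citing the table of attainable order-$3$ sepr-sequences in \cite{XMR-sepr}, handling $\tt S^*S^+N$ separately with the very same matrix $\diag(1,-1,-1,0)$ you use, while you build the seeds explicitly and use Observation \ref{odd terms obs} to halve the caseload — even your two ``ad hoc'' $4\times 4$ matrices for sequences $5$ and $7$ are in fact $C\oplus O_1$ in disguise). The only loose end is that sequence $11$ ($\tt S^+S^-S^-$) is delegated to ``analogous constructions'' without an explicit matrix, but a seed with sepr-sequence $\tt A^+A^-A^-$ is routine to produce, so this is a matter of completeness of presentation rather than a gap.
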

\begin{proof}
We divide the proof into two cases in terms of the sequence (\ref{S... 113}).

\noindent
\textbf{Case 1}: $\sigma = \tt S^*S^+N$.

\noindent
Observe that, if $B=\diag(1,-1,-1,0)$, then 
$\sepr(B) = \tt S^*S^*S^+N$.

\noindent
\textbf{Case 2}: $\sigma \neq \tt S^*S^+N$.

\noindent
Let $\psi$ be the sequence that results from replacing each $\tt S$ in $\sigma$ with $\tt A$. 
Observe that $\psi$ appears in \cite[Table 2]{XMR-sepr}, implying that there exists a ($3 \times 3$) Hermitian matrix $H$ such that $\sepr(H) = \psi$. 
As the only non-real Hermitian matrix listed in \cite[Table 2]{XMR-sepr} is $M_{\tt NA^-N}$,
and because $\psi \neq \tt NA^-N$,
we may assume that $H$ is a real symmetric matrix.
Let $H' = H \oplus O_1$.
As $\sigma$ is a sequence from 
$\{\tt N, S^*, S^+, S^-\}$,
and because of the relationship between $\psi$ and $\sigma$, Observation \ref{Append Zero} implies that
$\sepr(H') = \sigma {\tt N}$.
The desired conclusion, then, follows by letting $B=H'$.
\end{proof}

The following observation is an analogous result for sepr-sequences of  \cite[Observation 2.19 (1)]{EPR}.

\begin{obs}\label{Append LRC}
Let $B=[b_{ij}] \in \Cnn$ be Hermitian and 
$\sepr(B)= t_1t_2 \cdots t_n$.
Let $y$ be the last column vector of $B$ and
\[
B'=
\left(
\begin{array}{c|c}
 B & y \\
\hline \\[-4mm]
\overline{y}^T & b_{nn}
\end{array}
\right) .               
\]
Then $\sepr(B') = t_1t'_2 \cdots t'_n{\tt N}$,
where, for $j=2, 3, \dots, n$, all of the following statements hold:
\item If $t_j \in \{\tt N,S^*,S^+,S^-\}$,
then $t'_j = t_j$;
if $t_j = \tt A^*$, then $t'_j = \tt S^*$;
if $t_j = \tt A^+$, then $t'_j = \tt S^+$;
if $t_j = \tt A^-$, then $t'_j = \tt S^-$.
\end{obs}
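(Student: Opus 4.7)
The plan is to exploit the single structural fact that defines $B'$: its last two rows (and, by Hermiticity, its last two columns) are identical, because the $i$th entry of row $n+1$ of $B'$ equals $\overline{y_i} = \overline{b_{in}} = b_{ni}$, which agrees with the $i$th entry of row $n$. First I would observe that $B'$ is Hermitian (the bottom-right corner $b_{nn}$ is real because $B$ is Hermitian), and that $\det B' = 0$ because of the repeated rows, which immediately delivers $[\sepr(B')]_{n+1} = \tt{N}$.

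Next I would classify the order-$k$ principal submatrices $B'[\alpha]$ according to how $\alpha \subseteq [n+1]$ meets $\{n,n+1\}$: (a) if $\{n,n+1\} \subseteq \alpha$, then $B'[\alpha]$ inherits the repeated row and $\det B'[\alpha] = 0$; (b) if $n+1 \notin \alpha$, then $B'[\alpha] = B[\alpha]$; (c) if $n \notin \alpha$ but $n+1 \in \alpha$, writing $\alpha = \alpha' \cup \{n+1\}$ with $\alpha' \subseteq [n-1]$, a direct reading of the entries shows $B'[\alpha] = B[\alpha' \cup \{n\}]$. The upshot will be that, for $k \geq 2$, the set of order-$k$ principal minors of $B'$ coincides with the set of order-$k$ principal minors of $B$, augmented by at least one extra zero from case (a) (which is always available since any $k$-subset of $[n+1]$ containing $\{n,n+1\}$ exists when $2 \leq k \leq n$); and for $k=1$, cases (b) and (c) yield precisely the diagonal entries of $B$ (with $b_{nn}$ merely duplicated), so $[\sepr(B')]_1 = t_1$.

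Finally, for each $2 \leq k \leq n$, translating $t_k$ to $t'_k$ is a short case check: appending a zero to a multiset of minors of type $\tt{A^+}$, $\tt{A^-}$, or $\tt{A^*}$ converts it to type $\tt{S^+}$, $\tt{S^-}$, or $\tt{S^*}$, respectively, while the types $\tt{N}$, $\tt{S^*}$, $\tt{S^+}$, and $\tt{S^-}$ are absorbent under this operation. This matches the recipe defining $t'_k$ in the statement. The only step that requires some care will be the entry-by-entry identification in case (c), which uses both $y_i = b_{in}$ and Hermiticity to match diagonal and off-diagonal entries; once that is in hand, the remainder of the proof parallels the argument for Observation \ref{Append Zero} and reduces to direct bookkeeping.
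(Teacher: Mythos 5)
Your argument is correct: the key identities (rows $n$ and $n+1$ of $B'$ coincide, $B'[\alpha]=B[\alpha]$ when $n+1\notin\alpha$, and $B'[\alpha'\cup\{n+1\}]=B[\alpha'\cup\{n\}]$ when $n\notin\alpha$) show that for $2\le k\le n$ the set of order-$k$ principal minors of $B'$ is exactly that of $B$ together with $0$, which yields the stated translation of $t_k$ to $t'_k$, and the $k=1$ and $k=n+1$ cases are handled correctly. The paper states this as an observation with the proof omitted (it is the sepr-analogue of a known epr-sequence fact), and your direct verification is precisely the intended routine argument.
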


As with the sequences in Lemma \ref{SSS etc}, all the sequences that appear in the statement of the following fact are not forbidden over $\R$.

\begin{lem}\label{ASS etc}
Let $\sigma$ be any of the following sequences:

\begin{multicols}{4}
\begin{enumerate}
\item $\tt A^*S^*S^+$.
\item $\tt A^*S^*S^-$.
\item $\tt A^*S^-S^+$.
\item $\tt A^*S^-S^-$.
\item $\tt A^+S^*S^-$.
\item $\tt A^+S^+S^+$.
\item $\tt A^+S^+S^-$.
\item $\tt A^+S^-S^+$.
\item $\tt A^+S^-S^-$.
\item $\tt A^-S^*S^+$.
\item $\tt A^-S^+S^+$.
\item $\tt A^-S^+S^-$.
\item $\tt A^-S^-S^+$.
\item $\tt A^-S^-S^-$.
\item $\tt NS^-S^+$.
\item $\tt NS^-S^-$.
\end{enumerate}
\end{multicols}
\noindent
Then there exists a $4 \times 4$ real symmetric matrix $B$ such that $\sigma$ is a subsequence of $\sepr(B)$. 
\end{lem}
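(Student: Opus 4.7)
The strategy mirrors the proof of Lemma \ref{SSS etc}, but with Observation \ref{Append LRC} playing the role that Observation \ref{Append Zero} played there. Because Observation \ref{Append LRC} leaves the first term of the sepr-sequence untouched while converting each subsequent $\tt A^?$ into $\tt S^?$, it is perfectly suited to our sixteen sequences, whose form is $\sigma = \sigma_1\sigma_2\sigma_3$ with $\sigma_1 \in \{\tt A^*, A^+, A^-, N\}$ and $\sigma_2,\sigma_3$ of the form $\tt S^?$.

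For each $\sigma$ in the list I would form the companion sequence $\psi$ obtained by replacing every occurrence of $\tt S$ in $\sigma$ with $\tt A$ (keeping superscripts). The first step is to produce a $3 \times 3$ real symmetric matrix $H$ with $\sepr(H) = \psi$. For the fourteen sequences with $\sigma_1 \neq \tt N$, each resulting $\psi$ appears in \cite[Table 2]{XMR-sepr}; since the only non-real Hermitian matrix listed there is $M_{\tt NA^-N}$, and none of our $\psi$'s equal $\tt NA^-N$, a real symmetric matrix $H$ can be extracted in every case. For the two remaining sequences, $\psi = \tt NA^-A^+$ and $\psi = \tt NA^-A^-$, I would instead write down an explicit $3 \times 3$ matrix with zero diagonal and off-diagonal entries from $\{\pm 1\}$; the three $2 \times 2$ principal minors are then automatically $-1$ (giving $\tt A^-$), and the sign of the single order-$3$ minor $2\,b_{12}b_{13}b_{23}$ is controlled by the sign pattern of the off-diagonal entries, so both signs are easily arranged.

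With $H$ in hand, the second step is to form $H'$ by the construction of Observation \ref{Append LRC} (duplicate the last row and column of $H$). That observation then yields
\[
\sepr(H') \;=\; \sigma_1\,\psi'_2\,\psi'_3\,{\tt N},
\]
where $\psi'_j$ is obtained from $\psi_j$ by sending each $\tt A^?$ to $\tt S^?$ (and fixing $\tt N$ and $\tt S^?$). By the very definition of $\psi$, we have $\psi'_j = \sigma_j$ for $j = 2,3$, while $\sigma_1$ is preserved by the observation. Hence $\sepr(H') = \sigma\,{\tt N}$, and $\sigma$ is a subsequence of $\sepr(B)$ for $B := H'$, as required.

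I do not anticipate any substantive obstacle; the argument is essentially bookkeeping, provided one correctly matches the ``$\tt A^? \to \tt S^?$'' rule of Observation \ref{Append LRC} against the shape of $\sigma$. The only step that strays outside a table lookup is the $\sigma_1 = \tt N$ case, but the hollow matrices there are transparent and the single order-$3$ minor is a monomial whose sign is immediate.
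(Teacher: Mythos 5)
Your proposal is correct and matches the paper's own proof essentially step for step: replace each $\tt S$ in $\sigma$ with $\tt A$ to get $\psi$, pull a real symmetric $3\times 3$ matrix $H$ with $\sepr(H)=\psi$ from \cite[Table 2]{XMR-sepr}, and apply Observation \ref{Append LRC} to obtain $\sepr(H')=\sigma{\tt N}$. The only (harmless) deviation is that for $\sigma_1={\tt N}$ you construct the hollow matrices realizing $\tt NA^-A^+$ and $\tt NA^-A^-$ by hand, whereas the paper simply notes that these $\psi$ also appear in \cite[Table 2]{XMR-sepr}.
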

\begin{proof}
Let $\psi$ be the sequence that results from replacing each $\tt S$ in $\sigma$ with $\tt A$. 
As there is no $\tt S$ in the first term of $\sigma$, the first term of $\sigma$ and $\psi$ are equal to each other.
Observe that $\psi$ appears in \cite[Table 2]{XMR-sepr}, implying that there exists a ($3 \times 3$) Hermitian matrix $H$ such that $\sepr(H) = \psi$. 
As the only non-real Hermitian matrix listed in \cite[Table 2]{XMR-sepr} is $M_{\tt NA^-N}$,
and because $\psi \neq \tt NA^-N$,
we may assume that $H$ is a real symmetric matrix.
Let $H=[h_{ij}]$, $y$ be the last column vector of $H$ and
\[
H'=
\left(
\begin{array}{c|c}
 H & y \\
\hline \\[-4mm]
y^T & h_{33}
\end{array}
\right) .               
\]
As the first term of $\sigma$ and $\psi$ are equal to each other,
and because the second and third terms of $\sigma$ are in $\{\tt S^*, S^+, S^-\}$,
the relationship between $\psi$ and $\sigma$ implies that we can use Observation \ref{Append LRC} to reach the following conclusion: 
$\sepr(H') = \sigma {\tt N}$.
The desired conclusion, then, follows by letting $B=H'$.
\end{proof}

The remainder of this section consists of a list of lemmas that are readily verified and whose proof has been omitted. 
 
\begin{lem}\label{VierOne}
Let
\begin{equation*}
F(x,y,a)=
\left(
\begin{array}{rr|rr}
 x & 5 & 1 & 1 \\
 5 & y & 1 & 1 \\
 \hline
 1 & 1 & 1 & a \\
 1 & 1 & a & 1 \\
\end{array}
\right).
\end{equation*}
For the following matrices, 
$\sepr(R_{\sigma}) = \sigma$:
\begin{multicols}{2}
 \begin{enumerate}
 \item $\VierOneA :=F(2,\frac{1}{2},2)$.\label{VierOneA}
 \item $\VierOneB:=F(6,6,-\frac{3}{4})$.\label{VierOneB}
 \item $\VierOneC:=F(\frac{1}{2},\frac{1}{2},2)$.  \label{VierOneC}
 \item $\VierOneD:=F(-6,-5,-\frac{47}{5})$.\label{VierOneD}
 \item $\VierOneE:=F(4,5,-\frac{3}{5})$. \label{VierOneE}
 \item $\VierOneF:=F(-5,-5,0)$. \label{VierOneF}
 \item $\VierOneG:=F(1,3,0)$. \label{VierOneG}
 \item $\VierOneH:=F(1,25,\frac{1}{2})$. \label{VierOneH}
 \item $\VierOneI:=F(\frac{5}{2},10,-\frac{9}{10})$. \label{VierOneI}
 \item $\VierOneJ:=F(1,\frac{3}{4},\frac{4}{3})$. \label{VierOneJ}   
 \end{enumerate}
\end{multicols}
\end{lem}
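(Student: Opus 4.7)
The proof is a routine verification that the authors have omitted. The plan is, for each of the ten choices of $(x,y,a)$, to compute the fifteen principal minors of $F(x,y,a)$ --- the four of order $1$ (namely $x,y,1,1$), the six of order $2$, the four of order $3$, and the determinant --- and check that, at each order $k \in \{1,2,3,4\}$, the signs of the order-$k$ minors together produce the $k$th letter of $\sigma$ under the sepr-sequence definition.

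A preparatory step that handles all ten cases uniformly is to write the fifteen minors symbolically as polynomials in $x,y,a$. The order-$2$ minors collapse to four distinct values: $\det F[\{1,2\}] = xy-25$, $\det F[\{3,4\}] = 1-a^2$, $\det F[\{1,3\}] = \det F[\{1,4\}] = x-1$, and $\det F[\{2,3\}] = \det F[\{2,4\}] = y-1$. The four order-$3$ minors are obtained by cofactor expansion along a row, and the determinant can be computed efficiently via the Schur complement of the bottom-right $2 \times 2$ block (or, when $1-a^2 = 0$, via direct expansion). With all fifteen symbolic formulas tabulated, each of the ten cases reduces to arithmetic substitution of rationals.

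The execution then amounts to assembling a $10 \times 15$ table of signed minors. For each row the sepr-sequence is read off one position at a time: at order $k$ one partitions the order-$k$ minors into positive, negative, and zero values and reads off the corresponding letter from the sepr-sequence definition. The only real obstacle is clerical: distinguishing among the seven sepr-states requires exact attention to zero versus nonzero and to uniform versus mixed signs (for example, the distinction between $\tt S^*$ and $\tt S^+$ at a given order hinges entirely on whether any nonzero minor of that order is negative, and the distinction between $\tt A^*$ and $\tt S^*$ hinges on whether any minor of that order vanishes). I would therefore carry out the arithmetic in exact rationals, organize the computation by parameter triple, and verify each completed row against the subscript $\sigma$ of $R_\sigma$ --- paying particular attention to the cases in which one of the four distinct order-$2$ values happens to vanish, since such a vanishing is precisely what shifts the $k=2$ position from an $\tt A$-state into an $\tt S$-state.
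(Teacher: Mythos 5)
Your proposal is correct and matches the paper's approach: the paper explicitly omits the proof of this lemma as one of several that are ``readily verified,'' and the verification it has in mind is exactly the routine computation you describe (tabulating all fifteen principal minors of each $4\times 4$ matrix in exact rationals and reading off the sepr-sequence order by order). Your symbolic reduction of the order-$2$ minors to the four values $xy-25$, $1-a^2$, $x-1$, $y-1$ is accurate and is a sensible way to organize the check.
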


\begin{lem}\label{VierTwo}
Let
\begin{equation*}
F(x,y,a)=
\left(
\begin{array}{rr|rr}
 -2 & 2 & 2 & 0 \\
 2 & -2 & 0 & 2 \\
 \hline
 2 & 0 & x & a \\
 0 & 2 & a & y \\
\end{array}
\right).
\end{equation*}
For the following matrices, 
$\sepr(R_{\sigma}) = \sigma$:
\begin{multicols}{2}
 \begin{enumerate}
 \item $\VierTwoA := F(-2,2,1)$. \label{VierTwoA} 
 \item $\VierTwoB := F(2,2,3)$. \label{VierTwoB} 
 \item $\VierTwoC := F(-2,-2,-1)$. \label{VierTwoC} 
 \end{enumerate}
\end{multicols}
\end{lem}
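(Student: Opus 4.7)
The plan is straightforward direct verification: for each of the three parameter choices, compute all $\binom{4}{1}+\binom{4}{2}+\binom{4}{3}+\binom{4}{4}=15$ principal minors and read off their signs. Each sepr-sequence has four terms, so this decomposes into four independent checks per matrix. The bulk of the work reduces to arithmetic on small determinants, and I will outline how the structure of $F(x,y,a)$ makes this manageable.

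For the \textit{fourth} term (the $\mathtt{N}$), I would begin by deriving a closed form for $\det F(x,y,a)$ once and for all. Using the row operation $R_2 \leftarrow R_1 + R_2$ (which replaces row $2$ by $(0,0,2,2)$, since the leading $2\times 2$ block has rank $1$) followed by $R_3 \leftarrow R_3 + R_1$, and then cofactor-expanding along the now-zeroed first column, one obtains a single $3\times 3$ determinant which evaluates to
\begin{equation*}
\det F(x,y,a) \;=\; 8x + 8y - 16a + 16.
\end{equation*}
Substituting $(x,y,a) = (-2,2,1)$, $(2,2,3)$, and $(-2,-2,-1)$ each returns $0$, so the fourth term is $\mathtt{N}$ in all three cases.

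For the \textit{first} term, I would simply inspect the diagonal: $F(-2,2,1)$ has diagonal $(-2,-2,-2,2)$ giving $\mathtt{A^*}$, $F(2,2,3)$ has diagonal $(-2,-2,2,2)$ giving $\mathtt{A^*}$, and $F(-2,-2,-1)$ has diagonal $(-2,-2,-2,-2)$ giving $\mathtt{A^-}$. For the \textit{second} term I would list the six $2\times 2$ principal minors; a structural shortcut is that the $\{1,2\}$-block $\bigl(\begin{smallmatrix}-2 & 2\\ 2 & -2\end{smallmatrix}\bigr)$ has determinant $0$ for every parameter value, so a zero minor is automatically present and the second character must lie in $\{\mathtt{N},\mathtt{S^*},\mathtt{S^+},\mathtt{S^-}\}$; the remaining five minors then determine which one. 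For the \textit{third} term, there are only four $3\times 3$ principal minors per matrix to compute, and I would expand each along a row (or column) containing a $0$ to minimize the arithmetic.

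The only real ``obstacle'' is bookkeeping: one must keep the $6+4=10$ sub-determinants per matrix organised and confirm each sign. Beyond the determinant formula above, no conceptual input is required, and the three cases are independent. Once the tables of signs are assembled, the claimed equalities $\sepr(\VierTwoA)=\mathtt{A^*S^*A^+N}$, $\sepr(\VierTwoB)=\mathtt{A^*S^-A^+N}$, and $\sepr(\VierTwoC)=\mathtt{A^-S^+A^+N}$ follow immediately from the definition of the sepr-sequence.
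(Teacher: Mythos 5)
Your proposal is correct and matches the paper's (omitted) argument: the lemma is proved by direct computation of all fifteen principal minors, and your determinant formula $\det F(x,y,a)=8x+8y-16a+16$ together with the sign patterns you describe for the order-$1$, $2$, and $3$ minors all check out, yielding $\mathtt{A^*S^*A^+N}$, $\mathtt{A^*S^-A^+N}$, and $\mathtt{A^-S^+A^+N}$ respectively. The paper states these lemmas are ``readily verified'' and omits the proof, so no further comparison is needed.
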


\begin{lem}\label{VierThree}
Let
\begin{equation*}
F(x,y,z,a)=
\left(
\begin{array}{rr|rr}
 x & 1 & 1 & -1 \\
 1 & y & 1 & 1 \\
 \hline
 1 & 1 & z & a \\
 -1 & 1 & a & 0 \\
\end{array}
\right).
\end{equation*}
For the following matrices, 
$\sepr(R_{\sigma}) = \sigma$:
\begin{multicols}{2}
 \begin{enumerate}
 \item $\VierThreeA := F(0,0,0,1)$. \label{VierThreeA} 
 \item $\VierThreeB := F(0,0,0,0)$. \label{VierThreeB} 
 \item $\VierThreeC := F(2,2,4,1)$. \label{VierThreeC} 
 \end{enumerate}
\end{multicols}
\end{lem}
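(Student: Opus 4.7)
The plan is to verify the lemma by direct computation of principal minors. For each of the three matrices $R_{\tt NA^-A^*A^+}$, $R_{\tt NS^-S^*A^+}$ and $R_{\tt S^+A^*A^*A^-}$, we need to check all $\binom{4}{1}+\binom{4}{2}+\binom{4}{3}+\binom{4}{4}=15$ principal minors and confirm that their signs match the claimed sepr-sequence.

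Since the three matrices are all specializations of the single four-parameter matrix $F(x,y,z,a)$, I would first derive symbolic formulas for the principal minors of $F(x,y,z,a)$, and then substitute. The four $1\times 1$ minors are $x$, $y$, $z$ and $0$. The six $2\times 2$ principal minors are easy to write down: for example, the $\{1,4\}$-minor equals $-x$, the $\{2,4\}$-minor equals $-1$, the $\{3,4\}$-minor equals $-a^2$, while the $\{1,2\}$-, $\{1,3\}$-, $\{2,3\}$-minors are $xy-1$, $xz-1$, $yz-1$, respectively. For the four $3\times 3$ minors I would expand along row or column $4$ when $4\in\alpha$, obtaining polynomials in $x,y,z,a$, and expand $F[\{1,2,3\}]$ directly. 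Finally $\det F$ is obtained by expanding along the last row or column. With this table in hand, each of the three verifications reduces to plugging in the given parameter triple and reading off signs.

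For each matrix it then suffices to check, for each $k \in \{1,2,3,4\}$, which of the seven possibilities occurs: all order-$k$ minors zero ($\tt N$); all of the same positive or negative sign ($\tt A^+$ or $\tt A^-$); all nonzero with both signs present ($\tt A^*$); some zero and some nonzero, all nonzero of one sign ($\tt S^+$ or $\tt S^-$); or some zero together with both positive and negative nonzero minors ($\tt S^*$). For instance, for $F(0,0,0,1)$ the four diagonal entries are all zero (giving $\tt N$), all six $2\times 2$ minors reduce to $-1$ (giving $\tt A^-$), and one then checks that the four $3\times 3$ minors are nonzero with mixed signs (yielding $\tt A^*$) and that $\det F>0$ (yielding $\tt A^+$). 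Analogous substitutions handle $F(0,0,0,0)$ and $F(2,2,4,1)$.

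The main obstacle is purely bookkeeping: keeping careful track of the extra minus signs introduced by the off-diagonal entry $-1$ in positions $(1,4)$ and $(4,1)$, and correctly identifying, for the $\tt S$-type and $\tt A^*$-type terms, both the presence of a minor of each required sign and the absence of any minor of the forbidden sign. No conceptual difficulty arises; the verification is mechanical once the symbolic table of minors of $F(x,y,z,a)$ has been assembled.
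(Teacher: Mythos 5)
Your proposal is correct and is exactly what the paper intends: the paper explicitly omits the proof of this lemma as ``readily verified,'' and brute-force computation of all $15$ principal minors of each specialization of $F(x,y,z,a)$ is the verification, which does confirm $\sepr(F(0,0,0,1))={\tt NA^-A^*A^+}$, $\sepr(F(0,0,0,0))={\tt NS^-S^*A^+}$ and $\sepr(F(2,2,4,1))={\tt S^+A^*A^*A^-}$. One slip in your symbolic table: the $\{1,4\}$ principal minor is $x\cdot 0-(-1)(-1)=-1$, not $-x$ (had it been $-x$, the order-$2$ term of $F(0,0,0,1)$ would be $\tt S^-$ rather than the correct $\tt A^-$ that you in fact state).
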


\begin{lem}\label{VierFour}
Let
\begin{equation*}
F(x,a,b)=
\left(
\begin{array}{rr|rr}
 0 & a & 2 & 1 \\
 a & x & 1 & 2 \\
 \hline
 2 & 1 & 1 & b \\
 1 & 2 & b & 1 \\
\end{array}
\right).
\end{equation*}
For the following matrices, 
$\sepr(R_{\sigma}) = \sigma$:
\begin{multicols}{2}
 \begin{enumerate}
 \item $\VierFourA :=F(-1,1,0)$.\label{VierFourA}
 \item $\VierFourB:=F(-1,2,5)$.\label{VierFourB}
 \item $\VierFourC:=F(-1,2,2)$.\label{VierFourC}
 \item $\VierFourD:=F(0,1,0)$.\label{VierFourD}
 \item $\VierFourE:=F(0,\frac{1}{2},2)$.\label{VierFourE}
 \item $\VierFourF:=F(0,2,2)$. \label{VierFourF}
 \item $\VierFourG:=F(0,5,-2)$. \label{VierFourG}
 \item $\VierFourH:=F(0,-1,-2)$. \label{VierFourH}
 \item $\VierFourI:=F(-1,2+\sqrt{5},0)$. \label{VierFourI}
 \item $\VierFourJ:=F(0,4,0)$. \label{VierFourJ} 
 \item $\VierFourK:=F(-1,0,0)$. \label{VierFourK}  
 \item $\VierFourL:=F(-1,0,2)$. \label{VierFourL}  
 \item $\VierFourM:=F(-1,0,4)$. \label{VierFourM}  
 \item $\VierFourN:=F(1,3,0)$. \label{VierFourN}  
 \item $\VierFourO:=F(1,3,1)$. \label{VierFourO}    
 \end{enumerate}
\end{multicols}
\end{lem}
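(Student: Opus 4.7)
\medskip

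\noindent\textbf{Proof proposal.} The approach is a direct, parameter-driven verification. Because all fifteen matrices are specializations of the single one-parameter family $F(x,a,b)$, I would first compute the fifteen principal minors of $F(x,a,b)$ symbolically, and then substitute each of the fifteen parameter triples and read off the sign pattern.

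Concretely, I proceed in four steps. First, record the four order-$1$ minors, which are $(0, x, 1, 1)$ and depend only on $x$: the first term of $\sepr(F(x,a,b))$ is $\tt S^*$ if $x<0$, $\tt S^+$ if $x\ge 0$, which is consistent with every target sequence ($x=-1$ for the $\tt S^*$-first rows, $x=0$ or $x=1$ for the $\tt S^+$-first rows). Second, expand the six order-$2$ principal minors once and for all: one obtains $(-a^{2},\,-4,\,-1,\,x-1,\,x-4,\,1-b^{2})$ corresponding respectively to the index sets $\{1,2\}, \{1,3\}, \{1,4\}, \{2,3\}, \{2,4\}, \{3,4\}$. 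Since $-4$ and $-1$ are always strictly negative and $-a^{2}\le 0$, the second term is determined by whether $a$ and $b$ take the exceptional values $0$, $\pm 1$ and by the signs of $x-1$ and $x-4$. Third, write out the four order-$3$ principal minors $\det F[\{2,3,4\}]$, $\det F[\{1,3,4\}]$, $\det F[\{1,2,4\}]$, $\det F[\{1,2,3\}]$ by cofactor expansion as small polynomials in $(x,a,b)$. Fourth, compute $\det F(x,a,b)$ once, again by cofactor expansion along the top row.

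With these five general formulas in hand, verification of each of the fifteen matrices reduces to plugging in the triple $(x,a,b)$ listed in the statement and recording the signs. To illustrate the mechanism, for $\VierFourA = F(-1,1,0)$ the order-$2$ minors are $(-1,-4,-1,-2,-5,1)$, yielding $\tt A^{*}$ (all nonzero, two of opposite sign); for $\VierFourB = F(-1,2,5)$ they are $(-4,-4,-1,-2,-5,-24)$, yielding $\tt A^{-}$. The order-$3$ and order-$4$ terms are checked the same way from the precomputed formulas. The first terms will automatically match by the observation in the previous paragraph.

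The only real obstacle is bookkeeping: fifteen matrices times fifteen principal minors is $225$ numerical checks. Each check is elementary arithmetic with small integers (or at worst small fractions such as $\tfrac12$ and the one irrational entry $2+\sqrt{5}$ occurring in $\VierFourI$, where $a^{2}=9+4\sqrt5$ enters a single order-$2$ and one order-$3$ minor), but the computation is tedious. Exploiting the common family $F(x,a,b)$ rather than treating the fifteen matrices individually is what makes the verification tractable, since the general polynomial formulas for the fifteen minors need be derived only once; the sign check at each of the fifteen parameter triples is then mechanical and can also be confirmed by any computer algebra system.
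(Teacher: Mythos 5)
Your proposal is correct and takes essentially the same route as the paper: the paper explicitly omits the proof of this lemma as ``readily verified,'' meaning exactly the direct computation of the fifteen principal minors of each specialization that you describe, and your symbolic order-$1$ and order-$2$ minor formulas for $F(x,a,b)$ (diagonal $(0,x,1,1)$ and pair minors $-a^2$, $-4$, $-1$, $x-1$, $x-4$, $1-b^2$) are accurate, as are your sample sign checks for $\VierFourA$ and $\VierFourB$.
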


%

\begin{lem}\label{VierFive}
Let
\begin{equation*}
F(x,a,b)=
\left(
\begin{array}{rr|rr}
 0 & a & 1 & 2 \\
 a & -1 & 1 & 1 \\
 \hline
 1 & 1 & 1 & b \\
 2 & 1 & b & x \\
\end{array}
\right).
\end{equation*}
For the following matrices, 
$\sepr(R_{\sigma}) = \sigma$:
\begin{multicols}{2}
 \begin{enumerate}
 \item $\VierFiveA := F(-2,1,10)$. \label{VierFiveA} 
 \item $\VierFiveB := F(-5,1,1)$. \label{VierFiveB} 
 \item $\VierFiveC := F(-2,2,\frac{1}{2})$. \label{VierFiveC} 
 \item $\VierFiveD := F(-2,0,\frac{3}{5})$. \label{VierFiveD} 
 \end{enumerate}
\end{multicols}
\end{lem}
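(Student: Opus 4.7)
The plan is a direct verification for each of the four matrices $\VierFiveA, \VierFiveB, \VierFiveC, \VierFiveD$. For each one I substitute the specified values of $x$, $a$, $b$ into the template $F(x,a,b)$, then compute all fifteen principal minors (four of order 1, six of order 2, four of order 3, and one of order 4) and read off the sepr-sequence term by term. Since all four target sequences begin with $\tt S^*$, the first term is immediate from one uniform observation: the diagonal of $F(x,a,b)$ is $(0,-1,1,x)$, and the condition $x<0$ holds in every case, so the order-1 principal minors always contain $0$, a positive, and a negative value, yielding $t_1 = \tt S^*$.

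For the order-2 step I enumerate the six principal $2\times 2$ submatrices, indexed by $\{i,j\} \subseteq [4]$. Using the symbolic form of $F$ this is especially efficient, since five of the six determinants do not depend on $x$: $\det F[\{1,2\}]=-a^2$, $\det F[\{1,3\}]=-1$, $\det F[\{1,4\}]=-4$, $\det F[\{2,3\}]=-2$, $\det F[\{2,4\}]=-2$, and $\det F[\{3,4\}]=x-b^2$. I then plug in each parameter triple and check that the collection of signs matches the second term of the intended $\sigma$ (for instance, $\tt A^*$ for $\VierFiveA$, where one expects a mixture of signs with no zero, and $\tt S^*$ for $\VierFiveD$, where a zero appears because $a=0$ makes $\det F[\{1,2\}]=0$).

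The order-3 step repeats the same bookkeeping for the four principal $3\times 3$ submatrices indexed by $\{1,2,3\},\{1,2,4\},\{1,3,4\},\{2,3,4\}$, and the order-4 step is a single $4\times 4$ determinant computation. To organize the $60$ evaluations efficiently, I would compute each of the fifteen minors symbolically in $x,a,b$ once, then specialize to the four parameter choices and tabulate the signs, cross-checking each column against the target sequence. As a sanity check on isolated terms I would also use Lemma~\ref{same sign lemma} (once a nonzero minor of the top order is found, all other nonzero order-$r$ minors must agree in sign with it, which is consistent with the $\tt A^+$ and $\tt A^-$ fourth terms).

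The only real obstacle is clerical: keeping track of which minors vanish and which do not, and correctly distinguishing $\tt A^*$ (all nonzero, mixed signs) from $\tt S^*$ (a zero present along with both signs) and from $\tt S^+/S^-$ (a zero present, nonzero entries all one sign). There is no conceptual difficulty, and the computations are well within the reach of a hand verification or a symbolic algebra system; this is precisely why the lemma is stated as ``readily verified'' with the proof omitted.
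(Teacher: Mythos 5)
Your approach --- computing all fifteen principal minors of each specialization and reading off the sign pattern order by order --- is exactly what the paper intends; it omits the proof precisely because this is the only sensible method and it is routine. However, one of your symbolic minors is wrong, and it happens to be the load-bearing one: $\det F[\{2,4\}] = \det\left(\begin{smallmatrix} -1 & 1 \\ 1 & x \end{smallmatrix}\right) = -x-1$, not $-2$; it does depend on $x$. Since $x<0$ in all four cases, this minor is \emph{positive} ($1$, $4$, $1$, $1$ respectively), and it is the unique positive order-$2$ principal minor in cases (1)--(3): the other five are $-a^2$, $-1$, $-4$, $-2$, and $x-b^2<0$. With your value $-2$ you would find all six order-$2$ minors nonpositive and conclude that the second term is $\tt A^-$ (or $\tt S^-$ in case (4)), contradicting the claimed $\tt A^*$ (resp.\ $\tt S^*$), so carried through literally your tabulation would ``refute'' the lemma rather than verify it. Correcting this one entry, the rest of your plan goes through; e.g.\ for $\VierFiveB=F(-5,1,1)$ the order-$2$ minors are $\{-1,-1,-4,-2,4,-6\}$, giving $\tt A^*$ as required.
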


\begin{lem}\label{VierSix}
Let
\begin{equation*}
F(x,a)=
\left(
\begin{array}{rr|rr}
 0 & 0 & 1 & 0 \\
 0 & -1 & 0 & 1 \\
 \hline
 1 & 0 & 1 & a \\
 0 & 1 & a & x \\
\end{array}
\right).
\end{equation*}
For the following matrices, 
$\sepr(R_{\sigma}) = \sigma$:
\begin{multicols}{2}
 \begin{enumerate}
 \item $\VierSixA := F(4,1)$. \label{VierSixA} 
 \item $\VierSixB := F(-5,2)$. \label{VierSixB}  
 \end{enumerate}
\end{multicols}
\end{lem}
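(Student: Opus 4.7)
The plan is entirely computational: for each of the two matrices $\VierSixA$ and $\VierSixB$, I would list all sixteen principal minors (of orders $1,2,3,4$), classify the multiset of order-$k$ minors according to the case distinction in the definition of sepr-sequence, and check that the resulting classification matches the subscript $\sigma$ in $\VierSixA$ or $\VierSixB$. Since the matrices have the common block of zeros in positions $(1,2),(1,4),(2,3)$, many minors vanish automatically, so the bookkeeping is light.

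More concretely, first I would record the diagonal $(0,-1,1,x)$ and observe that for both $x=4$ and $x=-5$ the order-$1$ principal minors include a positive, a negative and a zero entry, giving $t_1=\tt S^*$ in each case. Next I would list the six order-$2$ principal minors of $F(x,a)$ in closed form, obtaining $0,\ -1,\ 0,\ -1,\ -1-x,\ x-a^2$, and then substitute $(x,a)=(4,1)$ and $(x,a)=(-5,2)$ to check that each collection contains a positive, a negative and a zero, yielding $t_2=\tt S^*$ in both cases. For the four order-$3$ principal minors, I would use cofactor expansion along the first row (or first column) of each $3\times 3$ principal submatrix — one of them, $F(x,a)[\{1,2,4\}]$, has a zero first column and so automatically contributes a $0$, which produces the $\tt S$-type letter in position $3$. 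The remaining three order-$3$ values will then decide between $\tt S^*$ (for $\VierSixA$) and $\tt S^+$ (for $\VierSixB$).

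Finally, for the order-$4$ minor $\det F(x,a)$ I would expand along the first column, which has three zero entries, reducing the calculation to a single $3\times 3$ determinant; that in turn has a zero in the $(1,1)$ position, so one more expansion along the first row collapses it to a single $2\times 2$ determinant. This yields a closed form for $\det F(x,a)$ whose sign at $(4,1)$ is positive (giving $t_4=\tt A^+$ for $\VierSixA$) and at $(-5,2)$ is negative (giving $t_4=\tt A^-$ for $\VierSixB$). There is no real obstacle here — the only thing to be careful about is not to lose a sign in the cofactor expansion of the $4\times 4$ determinant — so the lemma reduces to tabulating these sixteen principal minors, which is precisely why the authors say it is readily verified and omit the details.
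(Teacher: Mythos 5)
Your proposal is correct: the paper omits the proof precisely because it is the direct verification you describe, and your intermediate values check out (order-2 minors $0,-1,0,-1,-1-x,x-a^2$; order-3 minors $1,0,-x,a^2-x-1$; $\det F(x,a)=x+1$), yielding $\tt S^*S^*S^*A^+$ at $(4,1)$ and $\tt S^*S^*S^+A^-$ at $(-5,2)$ as required.
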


\begin{lem}\label{FiveOne}
Let
\begin{equation*}
F(a,b)=
\left(
\begin{array}{rrr|rr}
 1 & a & 2 & 1 & 1 \\
 a & 1 & 2 & 1 & 1 \\
 2 & 2 & 1 & 1 & 1 \\
 \hline
 1 & 1 & 1 & -1 & b \\
 1 & 1 & 1 & b & -1 \\
\end{array}
\right)^{-1}.
\end{equation*}
For the following matrices, 
$\sepr(R_{\sigma}) = \sigma$:
\begin{multicols}{2}
 \begin{enumerate}
 \item $\FiveOneA := F(-2,2)$.\label{FiveOneA} 
 \item $\FiveOneB := F(-2,1)$.\label{FiveOneB} 
 \item $\FiveOneC := F(-3,2)$.\label{FiveOneC} 
 \item $\FiveOneD := F(7,2)$.\label{FiveOneD} 
 \item $\FiveOneE := F(-3,1)$.\label{FiveOneE} 
 \item $\FiveOneF := F(7,1)$.\label{FiveOneF} 
 \end{enumerate}
\end{multicols}
\end{lem}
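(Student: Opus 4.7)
The plan is purely computational, and the most efficient route exploits the Inverse Theorem rather than inverting the matrices symbolically. All six target sequences $\sigma = t_1 t_2 t_3 t_4 \tt A^-$ terminate in $\tt A^-$, so each $R_\sigma$ is nonsingular with negative determinant, which in turn forces $F(a,b)$ itself to be nonsingular with negative determinant. By the Inverse Theorem (part $(ii)$), establishing $\sepr(R_\sigma) = t_1 t_2 t_3 t_4 \tt A^-$ is equivalent to establishing
\[
\sepr(F(a,b)) = \negat(t_4 t_3 t_2 t_1)\,\tt A^-.
\]
So I would not compute any minor of the inverse; I would only compute principal minors of $F(a,b)$ for each of the six specified $(a,b)$ pairs, and then read off the conclusion via the Inverse Theorem.

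The concrete steps, done once per pair $(a,b)$, are: (i) compute $\det F(a,b)$ and verify that it is negative, which simultaneously confirms that $F(a,b)$ is invertible and that the last term of $\sepr(F(a,b))$ is $\tt A^-$; (ii) tabulate the $\binom{5}{1}+\binom{5}{2}+\binom{5}{3}+\binom{5}{4}=30$ remaining principal minors of orders $1$ through $4$; (iii) classify each order $k$ according to the definition of $t_k$, keeping careful track of whether a zero minor appears (which distinguishes $\tt A$-type from $\tt S$-type terms) and of the signs of the nonzero ones (which distinguishes the $*$, $+$, and $-$ decorations); (iv) compare the sequence so obtained with $\negat(t_4 t_3 t_2 t_1)\,\tt A^-$ and invoke the Inverse Theorem.

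The block pattern of $F(a,b)$ — a symmetric $3\times 3$ upper-left block in which $a$ appears only in the $(1,2)$/$(2,1)$ entries, an all-ones $3\times 2$ coupling block, and a $2\times 2$ lower-right block whose only variable entry is $b$ — lets one compute each principal minor once, symbolically as a polynomial in $a$ and $b$, and then evaluate at the six specified pairs. Minors that involve neither of rows/columns $\{1,2\}$ are independent of $a$, and those that involve neither of rows/columns $\{4,5\}$ are independent of $b$, so a large fraction of the table is shared among the six cases. This organization reduces the work substantially.

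The main obstacle here is not mathematical depth but bookkeeping: with six matrices and thirty-one principal minors each, the sensitive points are exactly those minors that might accidentally vanish (turning an $\tt A$-term into an $\tt S$-term or shifting the $*/+/-$ decoration). I would therefore treat the verification as a finite mechanical check, most naturally carried out in a computer algebra system, presenting the final classification in a table and invoking the Inverse Theorem once at the end to pass from $\sepr(F(a,b))$ to $\sepr(R_\sigma)$.
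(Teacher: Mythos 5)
Your plan is correct, and it is worth noting that the paper itself offers no argument for this lemma: it is grouped with the results described as ``readily verified and whose proof has been omitted,'' so the implicit intended verification is a direct computation of the $31$ principal minors of each inverse matrix. Your route differs in one genuinely useful way: by checking that the displayed integer matrix has negative determinant and then computing \emph{its} principal minors, you can invoke the Inverse Theorem (part $(ii)$) once to transport the sequence to the inverse, so all arithmetic happens over the integers rather than with the rational entries of $F(a,b)$. The logic of the reduction is sound --- if the displayed matrix $M$ satisfies $\det M<0$, then $\sepr(M)=\negat(t_4t_3t_2t_1)\tt{A^-}$ holds if and only if $\sepr(M^{-1})=t_1t_2t_3t_4\tt{A^-}$, since $\negat$ is an involution fixing $\tt A^*$, $\tt N$, $\tt S^*$ --- and your observation that a principal minor depends on $a$ only when both indices $1$ and $2$ are selected (and on $b$ only when both $4$ and $5$ are) correctly cuts down the bookkeeping. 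One small notational caution: in the lemma as stated, $F(a,b)$ denotes the \emph{inverse} of the displayed matrix, so where you write ``compute principal minors of $F(a,b)$'' you mean the un-inverted displayed matrix; relabel to avoid the appearance of computing minors of the very inverse you set out to avoid. With that fixed, the verification is a finite mechanical check and the conclusion follows.
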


\begin{lem}\label{FiveTwo}
Let 
\begin{equation*}
F(x,y,a,b,c) = 
\left(
\begin{array}{rrr|rr}
 x & a & b & 1 & 1 \\
 a & y & c & 1 & 1 \\
 b & c & 0 & 1 & 1 \\
 \hline
 1 & 1 & 1 & 0 & 1 \\
 1 & 1 & 1 & 1 & 0 \\
\end{array}
\right)^{-1}.
\end{equation*}
For the following matrices, 
$\sepr(R_{\sigma}) = \sigma$:
\begin{multicols}{2}
 \begin{enumerate}
 \item $\FiveTwoA := F(1,-1,-1,1,1)$.\label{FiveTwoA} 
 \item $\FiveTwoB := F(-1,0,-1,1,1)$.\label{FiveTwoB} 
 \item $\FiveTwoC := F(1,0,1,1,3)$.\label{FiveTwoC} 
 \item $\FiveTwoD := F(1,-1,-1,1,0)$.\label{FiveTwoD} 
 \item $\FiveTwoE := F(-1,0,0,1,-1)$.\label{FiveTwoE} 
 \item $\FiveTwoF := F(1,0,0,1,1)$.\label{FiveTwoF} 
 \item $\FiveTwoG := F(1,-1,-1,\frac{1}{2},1)$.\label{FiveTwoG} 
 \item $\FiveTwoH := F(-1,0,-1,1,2)$.\label{FiveTwoH} 
 \item $\FiveTwoI := F(1,0,\frac{1}{2},1,-1)$.\label{FiveTwoI} 
 \item $\FiveTwoJ := F(1,-1,1,\frac{1}{2},1)$.\label{FiveTwoJ} 
 \item $\FiveTwoK := F(-1,0,-\frac{1}{2},1,1)$.\label{FiveTwoK} 
 \item $\FiveTwoL := F(1,0,\frac{1}{2},1,1)$.\label{FiveTwoL} 
 \item $\FiveTwoM := F(1,-1,-1,0,0)$.\label{FiveTwoM} 
 \item $\FiveTwoN := F(-1,0,0,-1,0)$.\label{FiveTwoN} 
 \item $\FiveTwoO := F(1,0,0,1,0)$.\label{FiveTwoO} 
 \item $\FiveTwoP := F(1,-1,0,1,0)$.\label{FiveTwoP} 
 \end{enumerate}
 \end{multicols}
\end{lem}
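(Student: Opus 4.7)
The verification is routine but voluminous. My plan is, for each of the sixteen parameter tuples, first to check that $M := F(x,y,a,b,c)$ is nonsingular so that $R_\sigma := M^{-1}$ is well-defined, then to compute $\sepr(M)$ directly, and finally to read off $\sepr(R_\sigma) = \sepr(M^{-1})$ from the Inverse Theorem. Since $M$ is manifestly real symmetric, the theorem applies without further ado.

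Concretely, writing $\sepr(M) = t_1 t_2 t_3 t_4 t_5$, the nonsingularity check amounts to verifying $\det M \neq 0$, and this simultaneously pins down $t_5 \in \{{\tt A^+},{\tt A^-}\}$ via the sign of $\det M$. The Inverse Theorem then yields
\[
\sepr(R_\sigma) = \begin{cases} t_4\, t_3\, t_2\, t_1\, {\tt A^+} & \text{if } t_5 = {\tt A^+}, \\ \negat(t_4\, t_3\, t_2\, t_1)\,{\tt A^-} & \text{if } t_5 = {\tt A^-}, \end{cases}
\]
so it suffices to check, in each of the sixteen cases, that reversing (and, where needed, negating) the first four terms of $\sepr(M)$ and appending $t_5$ reproduces the target sequence $\sigma$. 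This strategy is preferable to computing $M^{-1}$ entry-by-entry and then forming its principal minors, since all the arithmetic can be done on the much tidier matrix $M$.

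Computing $\sepr(M)$ for a fixed parameter choice requires evaluating all $\binom{5}{1}+\binom{5}{2}+\binom{5}{3}+\binom{5}{4}+\binom{5}{5} = 31$ principal minors of $M$ and sorting them, order by order, into one of the seven categories $\{{\tt A^*},{\tt A^+},{\tt A^-},{\tt N},{\tt S^*},{\tt S^+},{\tt S^-}\}$. Every individual minor is a determinant of a matrix of order at most five with small rational entries, so each individual computation is wholly elementary.

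The main obstacle is not conceptual but logistical: altogether $16 \times 31 = 496$ principal minors must be evaluated and classified, with careful sign-tracking at each order. This is most sensibly carried out with computer-algebraic assistance and the outcome tabulated, rather than narrated line-by-line; no step of the verification is subtle, and no delicate cancellation or structural insight is required. In this sense the lemma is exactly the assertion that the routine verification just outlined succeeds for every listed parameter choice, which is why its proof has been omitted in the text.
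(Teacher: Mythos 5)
Your proposal is correct: the paper itself omits the proof of this lemma as a routine verification, and your plan --- check $\det M \neq 0$, compute the $31$ principal minors of the (small-integer) matrix $M$ to obtain $\sepr(M)$, then transfer to $\sepr(M^{-1})$ via the Inverse Theorem, whose hypothesis is automatic since the last term of the sepr-sequence of a nonsingular Hermitian matrix is always $\tt A^+$ or $\tt A^-$ --- is a legitimate and somewhat tidier way to organize exactly that verification. The only substantive content beyond the paper's ``readily verified'' is your observation that the arithmetic is better done on $M$ than on $M^{-1}$, which is sound and consistent with how the paper itself exploits the Inverse Theorem in Table \ref{long table}.
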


\begin{lem}\label{SixOne}
Let
\begin{equation*}
F(x,y)=
\left(
\begin{array}{rrr|rrr}
 1 & -9 & -2 & 5 & 2 & 2 \\
 -9 & x & -2 & 2 & 5 & 2 \\
 -2 & -2 & y & 2 & 2 & 5 \\
 \hline
 5 & 2 & 2 & 1 & -9 & -2 \\
 2 & 5 & 2 & -9 & 1 & -2 \\
 2 & 2 & 5 & -2 & -2 & -1 \\
\end{array}
\right)^{-1}.
\end{equation*}
For the following matrices, 
$\sepr(R_{\sigma}) = \sigma$:
\begin{multicols}{2}
 \begin{enumerate}
 \item $\SixOneA := F(1,-1)$.\label{SixOneA} 
 \item $\SixOneB := F(1,0)$.\label{SixOneB} 
 \item $\SixOneC := F(4,-1)$.\label{SixOneC} 
 \item $\SixOneD := F(4,0)$.\label{SixOneD} 
 \end{enumerate}
\end{multicols}
\end{lem}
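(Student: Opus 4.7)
The plan is to verify each of the four claims $\sepr(R_\sigma) = \sigma$ by direct computation of the $63 = \binom{6}{1}+\binom{6}{2}+\cdots+\binom{6}{6}$ principal minors for each specialization of $F(x,y)$. The statement is purely computational (as the preamble acknowledges: ``readily verified''), so the proof reduces to tabulation rather than conceptual argument. I would structure the verification into the following steps.

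First, for each of the four parameter pairs $(x,y) \in \{(1,-1),(1,0),(4,-1),(4,0)\}$, denote by $G(x,y)$ the $6 \times 6$ integer matrix appearing inside the inverse in the statement. Confirm that $\det(G(x,y)) \neq 0$ so that $F(x,y) = G(x,y)^{-1}$ is indeed defined; this single nonzero determinant also fixes a global sign constant needed below.

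Second, rather than computing $F(x,y)$ explicitly and then extracting minors of a rational matrix, I would exploit Jacobi's determinantal identity, which is precisely the engine behind the Inverse Theorem quoted earlier: for any $\alpha \subseteq [6]$,
\[
\det\bigl(F(x,y)[\alpha]\bigr) \;=\; \frac{\det\bigl(G(x,y)[\,[6]\setminus\alpha\,]\bigr)}{\det\bigl(G(x,y)\bigr)}.
\]
Thus the sign of each order-$k$ principal minor of $F(x,y)$ equals the sign of an order-$(6-k)$ principal minor of the integer matrix $G(x,y)$, corrected by the (fixed) sign of $\det(G(x,y))$. This keeps all arithmetic inside $\mathbb{Z}$ and bypasses the ugly rational inverse.

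Third, for each $k = 1,\dots,6$, gather the signs of the order-$k$ principal minors of $F(x,y)$ obtained in the previous step and classify $t_k$ by the criteria in the definition of the sepr-sequence — in particular, being careful to distinguish $\tt A^+$ from $\tt S^+$ (which requires at least one zero minor coexisting with positive ones) and $\tt A^*$ from $\tt S^*$ (mixed signs, with or without a zero). Verify that the resulting sequence $t_1 t_2 \cdots t_6$ matches the advertised $\sigma$ for each of $\SixOneA$, $\SixOneB$, $\SixOneC$, $\SixOneD$.

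The main obstacle is purely the volume of arithmetic: four matrices times $63$ minors is roughly $252$ determinant evaluations, best carried out with a computer algebra system. No individual step is subtle, but the classification of each $t_k$ is unforgiving — a single miscounted zero or sign flip will land one in the wrong one of the seven symbols, so sanity checks (e.g., confirming that $t_6 = \tt A^-$ in every case, which amounts to $\det(F(x,y)) < 0$, or that the ranks are consistent with the positions of the $\tt N$'s and $\tt S$'s via Lemma~\ref{same sign lemma}) should be used as consistency tests along the way.
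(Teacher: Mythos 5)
Your proposal is correct: the paper itself omits the proof, describing this lemma as one of several that are ``readily verified,'' and direct computation of the principal minors is exactly the intended verification. Your use of Jacobi's identity $\det(F[\alpha]) = \det(G[[6]\setminus\alpha])/\det(G)$ to keep the arithmetic in $\mathbb{Z}$ is a sensible refinement, and your stated sanity checks (e.g.\ $t_6 = \tt A^-$ forcing $\det G < 0$) are consistent with the four advertised sequences.
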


\begin{lem}\label{NSFreal}
For the following matrices, 
$\sepr(R_{\sigma}) = \sigma$:

\begin{enumerate}
\item
$\NSFrealA :=
\left(
\begin{array}{rrrr}
 1 & 0 & 1 & 1 \\
 0 & 1 & -1 & 1 \\
 1 & -1 & 1 & 0 \\
 1 & 1 & 0 & 1 \\
\end{array}
\right)$. \label{NSFrealA} 
\item
$\NSFrealB :=
\left(
\begin{array}{rrrrr}
 -1 & 1 & 1 & 1 & 1 \\
 1 & -1 & -1 & 1 & 1 \\
 1 & -1 & -1 & -1 & 1 \\
 1 & 1 & -1 & -1 & -1 \\
 1 & 1 & 1 & -1 & -1 \\
\end{array}
\right)$. \label{NSFrealB} 
\item 
$\NSFrealC :=
\left(
\begin{array}{rrrrr}
 -5 & 5 & -5 & 1 & -1 \\
 5 & -9 & -1 & 1 & 3 \\
 -5 & -1 & -9 & 3 & 1 \\
 1 & 1 & 3 & -1 & -1 \\
 -1 & 3 & 1 & -1 & -1 \\
\end{array}
\right)$. \label{NSFrealC} 
\item 
$\NSFrealD :=
\left(
\begin{array}{rrrrrr}
 -2 & -7 & -9 & 9 & 18 & 18 \\
 -7 & -21 & -28 & 28 & 56 & 56\\
 -9 & -28 & -37 & 38 & 76 & 76\\
 9 & 28 & 38 & -40 & -79 & -81\\
 18 & 56 & 76 & -79 & -156 & -159 \\
 18 & 56 & 76 & -81 & -159 & -162 \\
\end{array}
\right)$. \label{NSFrealD} 
\item
$\NSFrealE :=
\left(
\begin{array}{rrrrrr}
 0 & 0 & 1 & 0 & 0 & 1 \\
 0 & 0 & 0 & 1 & 0 & 1 \\
 1 & 0 & 0 & 1 & 0 & 0 \\
 0 & 1 & 1 & 0 & 1 & 0 \\
 0 & 0 & 0 & 1 & 0 & -1 \\
 1 & 1 & 0 & 0 & -1 & 0 \\
\end{array}
\right)$. \label{NSFrealE} 
\item 
$\NSFrealF :=
\left(
\begin{array}{rrrrrr}
 0 & 0 & 1 & 0 & 0 & 1 \\
 0 & 0 & 0 & 1 & 0 & 1 \\
 1 & 0 & 0 & 1 & 0 & 0 \\
 0 & 1 & 1 & 0 & 1 & 0 \\
 0 & 0 & 0 & 1 & 0 & 2 \\
 1 & 1 & 0 & 0 & 2 & 0 \\
\end{array}
\right)$. \label{NSFrealF} 
\end{enumerate}
\end{lem}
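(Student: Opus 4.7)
The plan is to verify each of the six claims $\sepr(R_\sigma) = \sigma$ by direct computation of the principal minors of the explicitly given matrix $R_\sigma$. Every entry of each of the six matrices is an integer, so every principal minor is an integer and can be evaluated exactly, with no rounding concerns. The general scheme is as follows: for each $k$ from $1$ to $n$, enumerate the $\binom{n}{k}$ index sets $\alpha \subseteq [n]$ with $|\alpha| = k$, compute $\det(R_\sigma[\alpha])$, and record its sign. The $k$th term $t_k$ of $\sepr(R_\sigma)$ is then read off from the resulting list of signs according to the definition of the sepr-sequence, and one checks that $t_k$ agrees with the $k$th character of the subscript $\sigma$ in $R_\sigma$.

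In practice, the verification is organized by order. For $k = 1$ only the diagonal entries are needed, and for $k = n$ only the single number $\det(R_\sigma)$ is required. The intermediate orders carry the bulk of the workload: $\NSFrealA$ is $4 \times 4$ and requires $\binom{4}{2} + \binom{4}{3} = 10$ intermediate determinants; each of the $5 \times 5$ matrices $\NSFrealB$ and $\NSFrealC$ requires $\binom{5}{2} + \binom{5}{3} + \binom{5}{4} = 25$; and each of the $6 \times 6$ matrices $\NSFrealD$, $\NSFrealE$, and $\NSFrealF$ requires $\binom{6}{2} + \binom{6}{3} + \binom{6}{4} + \binom{6}{5} = 56$. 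For the sparse matrices $\NSFrealE$ and $\NSFrealF$, cofactor expansion along rows and columns containing many zeros reduces this to a manageable computation by hand; the denser examples are most efficiently dispatched with a computer algebra system.

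Several results from Section \ref{s: known facts} serve both as shortcuts and as consistency checks. Once two consecutive terms have been confirmed to be $\tt N$, the $\tt NN$ Theorem eliminates the need to check any later order. The Inheritance Theorem (Theorem \ref{Inheritance}) imposes compatibility constraints between the term at order $k$ and the terms at higher orders, and Lemma \ref{same sign lemma} forces all nonzero principal minors at the rank level to share a sign, so once the rank of $R_\sigma$ is located, many of the remaining checks are automatic. The main obstacle is therefore not conceptual but purely the bookkeeping required by the $6 \times 6$ examples; consistently with the stated policy for this section, the computation is routine and may be omitted.
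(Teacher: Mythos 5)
Your proposal is correct and takes essentially the same route as the paper: the paper explicitly omits the proof of this lemma as ``readily verified,'' and the intended verification is exactly the exhaustive, exact integer computation of all principal minors that you describe (with the correct counts of determinants per matrix). Nothing further is needed.
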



\begin{lem}\label{Complex}
Let
\begin{equation*}
F(a,b)=
\left(
\begin{array}{rrr|rrr}
 0               &    0         & i & -i & 1 & a \\
 0               &    0         & 0 & b & 1 & -i \\
 -i               &    0         & 0 & 0 & 1 & 1 \\
 \hline
 i                &  \OL{b}   & 0 & 0 & 0 & -2 \\
 1               &    1         & 1 & 0 & 0 & 0 \\
 \OL{a}       &    i          & 1 & -2 & 0 & 0 \\
\end{array}
\right)
\quad \text{ and } \quad \
G(a)=
\left(
\begin{array}{rr|rr}
 0 & 1 &    1       & 1 \\
 1 & 0 &    i        & 1 \\
 \hline
 1 & -i &    0       & a \\
 1 & 1 & \OL{a}  & 0 \\
\end{array}
\right).
\end{equation*}
For the following matrices, 
$\sepr(C_{\sigma}) = \sigma$:
\begin{multicols}{2}
 \begin{enumerate}
 \item $\ComOneA := F(-4,1)$.\label{ComOneA} 
 \item $\ComOneB := F(2,-1)$.\label{ComOneB}
 \item $\ComTwoA := G(i)$.\label{ComTwoA} 
 \item $\ComTwoB := G(-i)$.\label{ComTwoB}
 \end{enumerate}
\end{multicols}
\end{lem}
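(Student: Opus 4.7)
The plan is to verify each of the four claims by direct computation of all principal minors of the relevant matrix and comparison of their signs with the claimed sepr-sequence. This is a verification lemma in the spirit of Lemmas \ref{VierOne}--\ref{NSFreal}; since each matrix is explicit and of modest order (either $4\times 4$ or $6\times 6$), the verification reduces to checking a finite list of determinants.

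First I would confirm that each matrix is Hermitian. In the definition of $F(a,b)$ the entries $\overline{a}$ and $\overline{b}$ have been placed in precisely the positions dictated by the condition $C^*=C$ (and the diagonal is real, being zero), and likewise $G(a)$ has $\overline{a}$ in the $(4,3)$-entry and $\pm i$ placed consistently. Hermitianness is then just an inspection for each of the four specializations. As a consequence, every principal minor is a real number, so the sepr-sequence is well defined.

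Next, for each of the four specializations I would enumerate the principal minors by order $k$ and compare the resulting signs to the $k$-th term of the claimed $\sigma$. For the order-$4$ matrices $\ComTwoA$ and $\ComTwoB$ this amounts to $\binom{4}{1}+\binom{4}{2}+\binom{4}{3}+\binom{4}{4}=15$ minors each; for the order-$6$ matrices $\ComOneA$ and $\ComOneB$ the count is $63$ minors each. A number of these are immediate from inspection: for instance, $t_1={\tt N}$ for $\ComOneA$ and $\ComOneB$ is clear because every diagonal entry of $F(a,b)$ is zero, and $t_1={\tt N}$ for $\ComTwoA$ and $\ComTwoB$ is clear because every diagonal entry of $G(a)$ is zero.

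The main obstacle is simply the volume of routine computation in the $6\times 6$ cases. The most delicate terms are the ``mixed'' entries: establishing $t_5={\tt S^*}$ for $\ComOneA$ requires exhibiting, among the six $5\times 5$ principal minors, one that is positive, one that is negative, and one that is zero; establishing $t_2={\tt S^-}$ for both $\ComOneA$ and $\ComOneB$ requires producing at least one zero and at least one negative $2\times 2$ principal minor and ruling out any positive one; and $t_3={\tt N}$ for both requires all twenty $3\times 3$ principal minors to vanish. I would rely on symbolic computation to tabulate the full list of minors, and then match the signs against the definitions of ${\tt A^*}$, ${\tt A^\pm}$, ${\tt N}$, ${\tt S^*}$, ${\tt S^\pm}$ to finish. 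The analogous but shorter computation handles $\ComTwoA$ and $\ComTwoB$.
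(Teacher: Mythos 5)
Your proposal is correct and matches the paper's intent exactly: the paper explicitly omits the proof of this lemma (and its neighbors) as ``readily verified,'' meaning precisely the direct verification you describe — confirming Hermitianness by inspection and then computing all $2^6-1=63$ (respectively $2^4-1=15$) principal minors of each specialization and matching their signs against the claimed sepr-sequence. Nothing further is needed.
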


\begin{lem}\label{NSFcom}
For the following matrices, 
$\sepr(C_{\sigma}) = \sigma$:
\begin{enumerate}
\item 
$\NSFcomA :=
\left(
\begin{array}{rrrrr}
 -1 & 1 & 1 & 1 & 1 \\
 1 & -1 & 1 & 1 & 1 \\
 1 & 1 & -1 & -i & i \\
 1 & 1 & i & -1 & -i \\
 1 & 1 & -i & i & -1 \\
\end{array}
\right)$. \label{NSFcomA} 
\item 
$\NSFcomB :=
\left(
\begin{array}{rrrrr}
 0 & i & i & i & i \\
 -i & 0 & i & i & i \\
 -i & -i & 0 & i & i \\
 -i & -i & -i & 0 & i \\
 -i & -i & -i & -i & 0 \\
\end{array}
\right)$. \label{NSFcomB} 
\end{enumerate}
\end{lem}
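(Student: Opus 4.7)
The plan is direct: for each of the two matrices, verify it is Hermitian (immediate by inspection) and then compute all $\binom{5}{k}$ principal minors for $k = 1,2,3,4,5$, checking that the signs realize the claimed sequence. The only subtlety is to organize the computation; I would use the structural symmetries of each matrix together with Observation \ref{permutation} to reduce to one representative minor per orbit.

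For $\NSFcomB$, I would write $\NSFcomB = iJ$, where $J$ is the $5\times 5$ real skew-symmetric matrix having $1$'s above and $-1$'s below the diagonal. Because the entries of $J$ depend only on the relative order of the row and column indices, for any $\alpha = \{i_1 < \cdots < i_k\} \subseteq [5]$ the principal submatrix $J[\alpha]$ coincides with the analogous $k \times k$ matrix $J_k$. Consequently every order-$k$ principal minor of $\NSFcomB$ equals $i^k \det J_k$. For odd $k$, $\det J_k = 0$ because $J_k$ is real skew-symmetric of odd order, giving $t_1 = t_3 = t_5 = \tt N$. For $k = 2$ and $k = 4$, the Pfaffian formula yields $\operatorname{Pf}(J_2) = 1$ and $\operatorname{Pf}(J_4) = J_{12}J_{34} - J_{13}J_{24} + J_{14}J_{23} = 1 - 1 + 1 = 1$, so the order-$2$ minors all equal $-1$ and the order-$4$ minors all equal $1$, giving $t_2 = \tt A^-$ and $t_4 = \tt A^+$, as required.

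For $\NSFcomA$, the matrix is invariant under the transposition $(1\,2)$ of the first two indices and under the cyclic permutation $(3\,4\,5)$ of the last three. By Observation \ref{permutation}, it suffices to compute one representative per orbit of the group $\langle (1\,2),(3\,4\,5)\rangle$ acting on $k$-subsets of $[5]$. Concretely: $\{1\},\{3\}$ for $k=1$; $\{1,2\},\{1,3\},\{3,4\}$ for $k=2$; $\{1,2,3\},\{1,3,4\},\{3,4,5\}$ for $k=3$; $\{1,2,3,4\},\{1,3,4,5\}$ for $k=4$; and $[5]$ for $k=5$. The verification proceeds as follows: every diagonal entry is $-1$, so $t_1 = \tt A^-$; every off-diagonal entry $a$ of $\NSFcomA$ satisfies $|a| = 1$, so each $2 \times 2$ principal minor equals $1 - |a|^2 = 0$, giving $t_2 = \tt N$; the three representative order-$3$ minors evaluate to positive integers, giving $t_3 = \tt A^+$; the two representative order-$4$ minors are nonzero and of opposite sign, giving $t_4 = \tt A^*$; and $\det(\NSFcomA)$ is negative, giving $t_5 = \tt A^-$.

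The main obstacle is merely the bookkeeping of the order-$4$ and order-$5$ determinants for $\NSFcomA$, since they involve the non-real entries $\pm i$. A clean way to handle these is to set $\alpha := 1+i$ and $\beta := 1-i$ (so that $\alpha\beta = 2$, $\alpha^2 = 2i$, $\beta^2 = -2i$) and, after the row operations $R_j \to R_j + R_1$ for $j \geq 2$, to expand along the first column. This reduces each determinant to $-1$ times a determinant of smaller size whose entries lie in $\{0,2,\alpha,\beta\}$, and routine cofactor expansion then delivers a real integer answer (as it must by Hermiticity) whose sign matches the claim.
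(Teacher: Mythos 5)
The paper omits the proof of this lemma entirely (it is listed among the lemmas that are ``readily verified and whose proof has been omitted''), so direct verification is the intended approach, and yours is correct. I checked the details: the orbit reduction under $\langle(1\,2),(3\,4\,5)\rangle$ is legitimate since $\NSFcomA$ is invariant under both generators, the representative order-$3$ minors are $4$, $2$, $2$, the order-$4$ representatives are $-8$ and $4$, $\det(\NSFcomA)=-24$, and the $iJ$/Pfaffian argument for $\NSFcomB$ yields $i^k\det J_k$ for every order-$k$ principal minor, giving ${\tt N A^- N A^+ N}$ exactly as claimed.
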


\section{Hermitian matrices}\label{s: Hermitian}
$\null$
\indent
In this section, we expand the list of sequences from $\{\tt A^*, A^+, A^-, N, S^*, S^+, S^-\}$ of order 2, as well as the list of sequences of order 3, that are known to be forbidden, and we provide a (complete) answer to Question \ref{question-Hermitian}
(see Theorem \ref{main result for order 2} and Theorem \ref{main result for order 3}).

The first result of this section states that the sequences $\tt NS^*$ and $\tt S^*N$ are forbidden.

\begin{thm}\label{S*N and NS*}
Neither $\tt NS^*$ nor $\tt S^*N$ occurs as a subsequence of the sepr-sequence of any Hermitian matrix.
\end{thm}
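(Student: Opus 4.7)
The plan is to deduce both halves of the theorem from a single application of the Sylvester (Desnanot--Jacobi) determinantal identity. For any $M \in \Cnn$, any $\alpha \subseteq [n]$, and distinct $i, j \in [n] \setminus \alpha$, one has
\[
\det(M[\alpha]) \cdot \det(M[\alpha \cup \{i,j\}]) = \det(M[\alpha \cup \{i\}]) \cdot \det(M[\alpha \cup \{j\}]) - \det(M[\alpha \cup \{i\}, \alpha \cup \{j\}]) \cdot \det(M[\alpha \cup \{j\}, \alpha \cup \{i\}]).
\]
When $M = B$ is Hermitian, the two off-diagonal minors on the right are complex conjugates of one another, so their product equals $|\det(B[\alpha \cup \{i\}, \alpha \cup \{j\}])|^{2} \geq 0$. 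The whole argument hinges on killing one factor on the left using the hypothesized $\tt N$ term and then reading off a nonnegativity constraint on the right.

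First I would treat the $\tt NS^*$ case. Suppose $\sepr(B) = t_1 \cdots t_n$ with $t_k = \tt N$ and $t_{k+1} = \tt S^*$. Pick any $\alpha$ with $|\alpha| = k$ and distinct $i, j \notin \alpha$; since $\det(B[\alpha]) = 0$, the identity rearranges to
\[
\det(B[\alpha \cup \{i\}]) \cdot \det(B[\alpha \cup \{j\}]) = \bigl|\det(B[\alpha \cup \{i\}, \alpha \cup \{j\}])\bigr|^{2} \geq 0.
\]
Thus any two order-$(k+1)$ principal minors of $B$ whose index sets share a common $k$-subset (equivalently, have symmetric difference of size $2$) have equal signs, where one of the two is allowed to be zero. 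The graph on $(k+1)$-subsets of $[n]$ with that adjacency is connected, so walking from any such subset to any other propagates this sign agreement: all order-$(k+1)$ principal minors of $B$ share a common sign. This contradicts $t_{k+1} = \tt S^*$, which demands the presence of at least one positive and one negative order-$(k+1)$ minor.

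The $\tt S^*N$ case is the mirror image. Assuming $t_k = \tt S^*$ and $t_{k+1} = \tt N$, apply the identity instead with $|\alpha| = k-1$. Now $\det(B[\alpha \cup \{i,j\}]) = 0$ kills the left-hand side, yielding $\det(B[\alpha \cup \{i\}]) \cdot \det(B[\alpha \cup \{j\}]) \geq 0$ for every such triple. The same connectivity argument then forces all order-$k$ principal minors of $B$ to share a common sign, contradicting $t_k = \tt S^*$.

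The main technical point requiring care is the Hermitian specialization of Sylvester's identity --- specifically, using that $B[\alpha \cup \{i\}, \alpha \cup \{j\}]$ and $B[\alpha \cup \{j\}, \alpha \cup \{i\}]$ are conjugate transposes to turn the ``mixed'' product into a squared modulus, which is precisely what supplies the nonnegativity driving the whole argument. The graph-theoretic propagation from adjacent subsets to all subsets is then a routine Johnson-graph connectivity observation, so I expect the proof to be short and uniform across the two cases.
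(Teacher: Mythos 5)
Your Hermitian specialization of the Desnanot--Jacobi identity is correct, and the resulting inequality $\det(B[\alpha\cup\{i\}])\,\det(B[\alpha\cup\{j\}])=\bigl|\det(B[\alpha\cup\{i\},\alpha\cup\{j\}])\bigr|^{2}\ge 0$ is a genuine consequence of $\det(B[\alpha])=0$. The gap is in the propagation step. What you obtain for two adjacent vertices of the Johnson graph is that the \emph{product} of the corresponding minors is nonnegative, and this relation is not transitive: if $\det(B[\gamma_1])>0$, $\det(B[\gamma_2])=0$ and $\det(B[\gamma_3])<0$ along a path $\gamma_1\sim\gamma_2\sim\gamma_3$, then every pairwise constraint you derived is satisfied, yet the endpoints carry minors of opposite sign. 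Connectivity of the Johnson graph therefore yields your conclusion only if the subgraph induced on the index sets with \emph{nonzero} minor were connected, and nothing in your argument guarantees that. This is not a removable technicality: the term you are trying to contradict is $\tt S^*$, whose very definition forces zero minors of that order to exist, so the blocking vertices are always present. (Run verbatim, your argument is a correct proof that $\tt NA^*$ and $\tt A^*N$ are forbidden, since an $\tt A^*$ term means all minors of that order are nonzero and sign agreement then does propagate; but that is Theorem \ref{sepr order-2 summary}, already known from \cite{XMR-sepr}. The whole difficulty of the present theorem is the case in which zeros occur.)

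This is precisely why the paper's proof is so much longer and is not a local two-order argument: it takes a principal submatrix $C$ of minimal order $m$ exhibiting $c_k={\tt S^*}$ above an $\tt N$, shows its sepr-sequence must continue $\tt S^*NS^*$, uses the Inheritance Theorem together with the minimality of $m$ to force the order-$k$ terms of $C[\{1,\dots,m-1\}]$ and $C[\{2,\dots,m\}]$ into $\{\tt A^+,S^+\}$ and $\{\tt A^-,S^-\}$ respectively, deduces from this an $\tt NN$ in the sepr-sequence of $C[\{2,\dots,m-1\}]$, and converts that into the rank bound $\rank(C)\le k+2$, which contradicts Lemma \ref{same sign lemma} because $c_{k+2}={\tt S^*}$. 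Some global mechanism of this kind for handling the zero minors is needed before your approach can close; the adjacent-pair Sylvester constraint alone does not suffice.
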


\begin{proof}
We start by demonstrating that $\tt S^*N$ is forbidden.
Suppose on the contrary that there exists a Hermitian matrix $B$ such that
$\sepr(B) := t_1t_2 \cdots t_n$ and
$t_kt_{k+1} = \tt S^*N$, 
for some integer $k$ with $1 \leq k \leq n-1$.
Let $m$ be the minimal integer with  
$k+1 \leq m \leq n$ 
for which 
$B$ has an $m \times m$ principal submatrix $C$ such that $\sepr(C) := c_1c_2 \cdots c_m$ and $c_{k} =  \tt S^*$.  
By the Inheritance Theorem, $c_{k+1} =  \tt N$.
As $c_k \neq \tt N$, $\rank(C) \geq k$.
Then, as $c_{k} =  \tt S^*$,
Lemma \ref{same sign lemma} implies that
$\rank(C) \geq k+1$.
Then, as $c_{k+1} =  \tt N$,
and because the rank of $C$ is principal, 
$\rank(C) \geq k+2$.
Then, as $c_{k+1} =  \tt N$, 
and because the rank of $C$ is principal,
the $\tt NN$ Theorem implies that 
$c_{k+2} \neq \tt N$.
By Theorem \ref{sepr order-2 summary},
$c_{k+2} \neq \tt A^*$.
Then, as 
$c_kc_{k+1}c_{k+2} = {\tt S^*N}c_{k+2}$, 
Theorem \ref{sepr order-3 summary} implies that $c_{k+2} = \tt S^*$.
Thus, $c_kc_{k+1}c_{k+2} = \tt S^*NS^*$.
As no matrix has an sepr-sequence whose 
last term is $\tt S^*$, 
the order of $C$ is at least $k+3$; that is, 
$m \geq k+3$.  Thus, $m \geq 4$.

As $c_k = \tt S^*$, there exist sets 
$\alpha, \beta \subseteq \{1,2, \dots, m\}$ such that 
$|\alpha|=|\beta|=k$,
$\det(C[\alpha]) > 0$ and 
$\det(C[\beta]) < 0$.
Because of Observation \ref{permutation},
we may assume, without loss of generality, that
$\alpha = \{1,2, \dots, k\}$ and 
$\beta \subseteq \{2, 3, \dots, m\}$.
Let $Z=C[\{1,2, \dots, m\} \setminus \{1,m\}]$ and
$\sepr(Z)=z_1z_2 \cdots z_{m-2}$.
Observe that, since $m \geq k+3$, the order of $Z$ (i.e., $m-2$) is at least $k+1$.
By the Inheritance Theorem, 
$z_{k+1} = c_{k+1} = \tt N$.

\noindent
{\bf Claim 1}: $z_{k} = \tt N$.

\noindent
We will now establish Claim 1.
Let $M$ be a $k \times k$ principal submatrix of $Z$.
We will now prove that $\det(M)=0$.
Let 
$X = C[\{1,2, \dots, m-1\}]$,
$Y = C[\{2,3, \dots, m\}]$,
$\sepr(X) = x_1x_2 \cdots x_{m-1}$ and
$\sepr(Y) = y_1y_2 \cdots y_{m-1}$.
By the Inheritance Theorem, 
$x_{k+1} = c_{k+1} = \tt N$ and 
$y_{k+1} = c_{k+1} = \tt N$.
Observe that, since $m \geq k+3$, 
$X$ and $Y$ are of order at least $k+2$.
Then, as $X$ and $Y$ are principal submatrices of 
(not only $C$ but also) $B$,
and because their order (i.e., $m-1$) 
is at least $k+1$ but less than $m$,
the minimality of $m$ implies that 
$x_k \neq \tt S^*$ and $y_k \neq \tt S^*$.
By Theorem \ref{sepr order-2 summary},
$x_k \neq \tt A^*$ and $y_k \neq \tt A^*$.
Thus, $x_k, y_k \in \{\tt A^+, A^-, N, S^+, S^-\}$.
Note that 
$C[\alpha]$ is a principal submatrix of $X$, and that
$C[\beta]$ is a principal submatrix of $Y$.
Then, as $\det(C[\alpha])>0$ and $\det(C[\beta]) <0$,
$x_k \in \{\tt A^+, S^+\}$ and $y_k \in \{\tt A^-, S^-\}$.
Observe that
$M$ is a $k \times k$ principal submatrix of $X$.
Then, as $x_k \in \{\tt A^+, S^+\}$, 
$\det(M) \geq 0$.
Observe, as well, that 
$M$ is a $k \times k$ principal submatrix of $Y$.
Then, as $y_k \in \{\tt A^-, S^-\}$,
$\det(M) \leq 0$. 
Thus, $\det(M) = 0$, as desired.
It follows, then, that Claim 1 is true.

\noindent
{\bf Claim 2}: $\rank(C) \leq k+2$.

\noindent
We will now establish Claim 2.
Recall that 
$Z=C[\{1,2, \dots, m\} \setminus \{1,m\}]$, 
$X = C[\{1,2, \dots, m-1\}]$ and 
$C$ is $m \times m$.
By Lemma \ref{rank when deleting},
$\rank(X) \leq \rank(Z) + 2$ and 
$\rank(C) \leq \rank(X)+2$.
As $z_{k+1} = \tt N$,
and because $z_{k} = \tt N$ (by Claim 1),
the $\tt NN$ Theorem implies that 
$z_j = \tt N$ for all $j \geq k$.
Then, as the rank of $Z$ is principal, 
$\rank(Z) \leq k-1$.
Then, as $\rank(X) \leq \rank(Z) + 2$,
$\rank(X) \leq k+1$. 
Then, as $x_{k+1} = \tt N$, 
and because the rank of $X$ is principal,
$\rank(X) \leq k$.
Then, as $\rank(C) \leq \rank(X)+2$,
$\rank(C) \leq k +2$.
Thus, Claim 2 is true.

As Claim 2 is true, and because 
$\rank(C) \geq k+2$, $\rank(C) = k+2$.
Then, as $c_{k+2} = \tt S^*$, 
we have reached a contradiction to 
Lemma \ref{same sign lemma}.
Hence, $\tt S^*N$ is forbidden.

To demonstrate that $\tt NS^*$ is forbidden, suppose on the contrary that there exists a Hermitian matrix $H$ such that 
$\sepr(H) := h_1h_2 \cdots h_n$ and 
$h_kh_{k+1} = \tt NS^*$, 
for some integer $k$ with $1 \leq k \leq n-1$.
By the Basic Proposition, $k \geq 2$.
By the $\tt NN$ Theorem, $h_{k-1} \neq \tt N$.
By Theorem \ref{sepr order-2 summary}, 
$h_{k-1} \neq \tt A^*$.
Then, as 
$h_{k-1}h_{k}h_{k+1} = h_{k-1}{\tt NS^*}$,
Theorem \ref{sepr order-3 summary} implies that $h_{k-1} = \tt S^*$.
Thus, $h_{k-1}h_k = \tt S^*N$,
contradicting what was established before this paragraph.
\end{proof}

The following theorem provides one part of the answer to Question \ref{question-Hermitian}.
Its proof references Table \ref{long table};
it is left to the reader to verify that in doing so
no circular reasoning took place.

\begin{thm}\label{main result for order 2}
A sequence from 
$\{\tt A^*,A^+,A^-,N,S^*,S^+,S^-\}$ of order $2$ does not occur as a subsequence of the sepr-sequence of any Hermitian matrix
if and only if 
it is one of the following sequences:
\[
\tt A^*N, \ NA^*, \ NS^* \mbox{ and \ } S^*N.
\]
\end{thm}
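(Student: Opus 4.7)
The plan is to split the proof into the two directions of the ``if and only if'' statement. For the forward (``only if'') direction, I would simply invoke the two results already at hand: Theorem \ref{sepr order-2 summary} of \cite{XMR-sepr} shows that $\tt A^*N$ and $\tt NA^*$ are forbidden, and Theorem \ref{S*N and NS*} (the immediately preceding theorem) shows that $\tt NS^*$ and $\tt S^*N$ are forbidden. These four sequences therefore all fail to occur as a subsequence of the sepr-sequence of any Hermitian matrix.

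For the reverse (``if'') direction, the task is to exhibit, for each of the remaining $7^2-4=45$ length-$2$ sequences from $\{\tt A^*,A^+,A^-,N,S^*,S^+,S^-\}$, a Hermitian matrix whose sepr-sequence contains that sequence as a subsequence. I would carry this out by compiling a single table (the ``long table'' referenced in the statement, Table \ref{long table}) in which each such length-$2$ sequence $\sigma$ is paired with a specific witness: either a known matrix from the literature (e.g.\ from \cite[Table 2]{XMR-sepr} or \cite[Table 6.1]{EPR-Hermitian}), one of the matrices built in the preliminary lemmas of Section \ref{s: lemmas} (Lemmas \ref{VierOne}--\ref{NSFcom}), or one of the auxiliary matrices produced by Observations \ref{Append Zero}, \ref{Append LRC} and Lemmas \ref{SSS etc}, \ref{ASS etc}. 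For each pair it then suffices to read off two consecutive entries of the recorded sepr-sequence and verify they equal $\sigma$.

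To keep the table short I would exploit two symmetries systematically. First, Observation \ref{odd terms obs} lets one replace a matrix $B$ by $-B$, which changes the $+$ and $-$ superscripts on odd-indexed terms; thus a single witness covers many sign variants, and I only need to list one representative from each orbit under the ``negative'' operation (taking into account the parity of the position of the subsequence inside the host sepr-sequence). Second, the Inheritance Theorem (Theorem \ref{Inheritance}) implies that any length-$2$ subsequence of an attained sepr-sequence is itself attained, so witnesses for longer sequences recorded in Section \ref{s: lemmas} automatically produce witnesses for many length-$2$ sequences; in particular, the existing constructions for length-$3$ and length-$4$ sequences in Lemmas \ref{SSS etc} and \ref{ASS etc} will already dispose of most of the entries that involve $\tt S^*$, $\tt S^+$, or $\tt S^-$.

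The main (essentially only) obstacle is a bookkeeping one: showing that every one of the $45$ admissible length-$2$ sequences really does appear somewhere in the accumulated collection of example matrices, with no gaps. To avoid circularity, I would order the verification so that in filling in Table \ref{long table} I appeal only to matrices constructed earlier in the paper (in Section \ref{s: known facts} or Section \ref{s: lemmas}) and never to a matrix whose sepr-sequence is itself only justified later by invoking this theorem. A careful pass through the table, sequence by sequence, together with the two symmetry reductions above, will establish attainability for every sequence not in $\{\tt A^*N, NA^*, NS^*, S^*N\}$ and thereby complete the proof.
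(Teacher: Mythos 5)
Your proposal matches the paper's proof in both structure and substance: the forbidden direction is handled by citing Theorem \ref{sepr order-2 summary} and Theorem \ref{S*N and NS*}, and the attainability of the remaining $45$ sequences is delegated to a table of explicit witnesses (the paper uses Table \ref{long table}, treating $\tt NN$ separately via $O_2$, and likewise leaves the sequence-by-sequence check to the reader). The only quibble is that your appeal to the Inheritance Theorem is unnecessary --- a subsequence of an attained sepr-sequence is non-forbidden by the very definition of ``forbidden'' --- but this does not affect correctness.
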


\begin{proof}
Observe that there are $7^2=49$ sequences from  
$\{\tt A^*,A^+,A^-,N,S^*,S^+,S^-\}$ of order $2$.
Theorem \ref{sepr order-2 summary} and 
Theorem \ref{S*N and NS*} imply that
$\tt A^*N$, $\tt NA^*$, $\tt S^*N$ and $\tt NS^*$ are forbidden. 
We claim that the remaining 45 sequences are not forbidden. 
Observe that $\sepr(O_2)= \tt NN$;
thus, $\tt NN$ is not forbidden.
The remaining 44 sequences are the following:
$\tt A^* A^*$, 
$\tt A^* A^+$,
$\tt A^* A^-$,
$\tt A^+ A^*$,
$\tt A^+ A^+$,
$\tt A^+ A^-$,
$\tt A^- A^*$,
$\tt A^- A^+$,
$\tt A^- A^-$,
$\tt A^+ N$,
$\tt A^- N$,
$\tt A^* S^*$,
$\tt A^* S^+$,
$\tt A^* S^-$,
$\tt A^+ S^*$,
$\tt A^+ S^+$,
$\tt A^+ S^-$,
$\tt A^- S^*$,
$\tt A^- S^+$,
$\tt A^- S^-$,
$\tt N A^+$,
$\tt N A^-$,
$\tt N S^+$,
$\tt N S^-$,
$\tt S^* A^*$,
$\tt S^* A^+$,
$\tt S^* A^-$,
$\tt S^+ A^*$,
$\tt S^+ A^+$,
$\tt S^+ A^-$,
$\tt S^- A^*$,
$\tt S^- A^+$,
$\tt S^- A^-$,
$\tt S^+ N$,
$\tt S^- N$,
$\tt S^* S^*$,
$\tt S^* S^+$,
$\tt S^* S^-$,
$\tt S^+ S^*$,
$\tt S^+ S^+$,
$\tt S^+ S^-$,
$\tt S^- S^*$,
$\tt S^- S^+$ and
$\tt S^- S^-$.
It is left as an exercise for the reader to verify that, for any sequence $\sigma$ among those 44 sequences, there exists a sequence $\pi$ for which all of the following hold: 
(i) $\pi$ is one of the sequences (of order $3$) that appears in the first column of Table \ref{long table};
(ii) $\sigma$ is a subsequence of $\pi$; and
(iii) a real symmetric matrix is referenced in the second column of the row of Table \ref{long table} in which $\pi$ appears
(that $\pi$ may be chosen so that that matrix is real is relevant to Section \ref{s: real symmetric}, where we will reference this proof). Thus, those $44$ sequences are not forbidden.
\end{proof}

The sequences $\tt A^+S^*S^+$ and \ $\tt S^+S^*S^+$ are forbidden.

\begin{thm}\label{A+S*S+ and S+S*S+}
Neither $\tt A^+S^*S^+$ nor \ $\tt S^+S^*S^+$
occurs as a subsequence of the sepr-sequence of any Hermitian matrix.
\end{thm}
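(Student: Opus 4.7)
\noindent\textit{Proof proposal.} The plan is to argue by contradiction. Suppose a Hermitian matrix $B$ of order $n$ has $\sepr(B) = t_1 t_2 \cdots t_n$ with $t_k t_{k+1} t_{k+2}$ equal to $\tt A^+S^*S^+$ or $\tt S^+S^*S^+$. Since $t_{k+2} = \tt S^+$ requires both a zero and a nonzero $(k+2)$-minor to coexist, one has $n \geq k+3$. By the Inheritance Theorem, every $k \times k$ principal minor of $B$ is nonnegative and every $(k+2) \times (k+2)$ principal minor of $B$ is nonnegative. The central tool will be the Desnanot--Jacobi (Lewis Carroll) identity, which for any $(k+2) \times (k+2)$ principal submatrix $M$ of $B$ and distinct indices $a, b$ reads
\[
\det\!\bigl(M[\overline{\{a\}}]\bigr)\,\det\!\bigl(M[\overline{\{b\}}]\bigr) = \det(M)\,\det\!\bigl(M[\overline{\{a,b\}}]\bigr) + \bigl|\det\!\bigl(M_a^b\bigr)\bigr|^2,
\]
where $M_a^b$ denotes the submatrix obtained from $M$ by deleting row $a$ and column $b$ (and $(M_a^b)^* = M_b^a$, making the cross term real and nonnegative). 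Because $\det(M) \geq 0$, $\det(M[\overline{\{a,b\}}]) \geq 0$, and the squared term is nonnegative, the right-hand side is nonnegative; hence the product of any two $(k+1)$-principal-minors of $M$ is nonnegative, so all nonzero $(k+1)$-principal-minors of $M$ share a single common sign.

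Now, since $t_{k+1} = \tt S^*$, pick $(k+1)$-sets $\alpha, \beta \subseteq [n]$ with $\det(B[\alpha]) < 0 < \det(B[\beta])$. Applying the common-sign conclusion to $M = B[\alpha \cup \{j\}]$ for each $j \notin \alpha$ shows that no positive $(k+1)$-set can sit inside $\alpha \cup \{j\}$, so $|\alpha \cap \beta| \leq k-1$. Next, since $t_{k+2} = \tt S^+$, pick a $(k+2)$-set $\delta$ with $\det(B[\delta]) > 0$. If every $(k+1)$-principal-minor of $B[\delta]$ were zero, a second use of Desnanot--Jacobi would force every $k$-principal-minor inside $B[\delta]$ to vanish, dropping $\rank(B[\delta])$ below $k+1$ and contradicting $\det(B[\delta]) > 0$; hence $B[\delta]$ has a nonzero, necessarily positive, $(k+1)$-principal-minor. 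The plan is to then exhibit a \emph{zero} $(k+1)$-principal-minor inside $\delta$: one selects a swap-neighbor of $\alpha$ that lies inside $\delta$, which, by the $\alpha$-side argument, is forced into $N \cup Z$, and, by the $\beta$-side argument applied inside $\delta$, is forced into $P \cup Z$; so it lies in $Z$. The identity applied to $B[\delta]$ across this zero $(k+1)$-minor and the positive one yields $\det(B[\delta]) \cdot \det(B[\delta \setminus \{a,b\}]) = 0$ for the corresponding $k$-subset $\delta\setminus\{a,b\}$, forcing a $k$-minor of $B$ to vanish.

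In the $\tt A^+S^*S^+$ case, $t_k = \tt A^+$ forbids any zero $k$-minor, so the forced vanishing is the contradiction and the proof is complete. The main obstacle lies in the $\tt S^+S^*S^+$ case, where the forced zero $k$-minor is compatible with $t_k = \tt S^+$; an additional argument is needed. The plan there is to pass to a minimal-order counterexample $B$ and exploit minimality together with the Inheritance Theorem, Theorem \ref{sepr order-3 summary}, and Theorem \ref{S*N and NS*} to show that some $(n-1) \times (n-1)$ principal submatrix of $B$ must exhibit a previously established forbidden pattern (for instance $\tt A^+S^*A^+$, $\tt S^+S^*A^+$, or $\tt S^*N$) or else violate the positivity of some $k$-minor guaranteed by $t_k = \tt S^+$. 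The hard part is choosing the index $i$ for the deletion so that the inherited sepr-sequence of $B^{(i)}$ unavoidably lands in one of these forbidden patterns; I expect this to require a careful case split on the intersection structure of the sets $P$, $N$, $Z$ of positive, negative, and zero $(k+1)$-subsets identified above.
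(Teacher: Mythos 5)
Your Desnanot--Jacobi machinery is set up correctly: for a Hermitian $M$ the cross term is $|\det(M_a^b)|^2\ge 0$, so with $t_k,t_{k+2}\in\{\tt A^+,S^+\}$ the nonzero $(k+1)$-principal minors inside any fixed $(k+2)$-set do share a sign, the deduction $|\alpha\cap\beta|\le k-1$ is valid, and a positive $(k+2)$-set cannot have all of its $(k+1)$- and $k$-subminors vanish. But the proof breaks exactly where you try to manufacture a zero $(k+1)$-minor inside a positive $(k+2)$-set $\delta$. First, ``necessarily positive'' is false: a positive $(k+2)$-principal minor can have \emph{all} of its nonzero $(k+1)$-subminors negative even when every order-$k$ minor is positive --- for $k=1$ take the real symmetric matrix with diagonal entries $1$ and off-diagonal entries $2$, whose $1\times1$ minors are $1$, whose $2\times2$ principal minors are all $-3$, and whose determinant is $5$. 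So knowing $\det(B[\delta])>0$ tells you nothing about which sign the common sign inside $\delta$ is. Second, a ``swap-neighbor of $\alpha$ lying inside $\delta$'' exists only when $|\alpha\cap\delta|\ge k$, and nothing in your construction forces the positive $(k+2)$-set $\delta$ to meet $\alpha$ in that many indices; you cannot in general route the sign change from $\alpha$ to $\beta$ through a single $(k+2)$-window, and a chain of one-element swaps can pass through zero $(k+1)$-minors, at which points the identity yields no information. Hence even the $\tt A^+S^*S^+$ case is not closed. Third, you explicitly leave the $\tt S^+S^*S^+$ case as a plan (``an additional argument is needed,'' ``I expect this to require a careful case split''), and that is where essentially all of the difficulty lives, since a vanishing order-$k$ minor is consistent with $t_k=\tt S^+$.

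For comparison, the paper handles both cases simultaneously by choosing a principal submatrix $C$ of \emph{minimal} order $m$ that still has $\tt S^*$ in position $k$. Minimality plus the Inheritance Theorem forces the $k$-th sepr terms of the two corner submatrices $X=C[\{1,\dots,m-1\}]$ and $Y=C[\{2,\dots,m\}]$ out of $\{\tt A^*,S^*\}$, so each has one-signed order-$k$ minors; steering a positive witness into $X$ and a negative one into $Y$ then forces every order-$k$ minor of the overlap $Z=C[\{2,\dots,m-1\}]$ to vanish, and that singular $k\times k$ block sits inside a nonsingular $(k+1)\times(k+1)$ principal submatrix $C'$, producing a pattern already known to be forbidden ($\tt NSA$ or one of the $\tt U^+XA^+$ patterns). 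Your determinantal identity could plausibly substitute for Lemma \ref{same sign lemma} inside such a scheme, but the minimal-counterexample scaffolding --- the idea that actually localizes the contradiction --- is the piece your proposal is missing.
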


\begin{proof}
Suppose on the contrary that there exists a Hermitian matrix $B$ such that
$\sepr(B) := t_1t_2 \cdots t_n$ and
$t_{k-1}t_kt_{k+1} \in
\{ \tt A^+S^*S^+, S^+S^*S^+\}$, 
for some integer $k$ with $2 \leq k \leq n-1$.
Let $m$ be the minimal integer with  
$k+1 \leq m \leq n$ for which 
$B$ has an $m \times m$ principal submatrix $C$ such that $\sepr(C) := c_1c_2 \cdots c_m$ and $c_{k} =  \tt S^*$. 
By the Inheritance Theorem,
$c_{k-1}, c_{k+1} \in \{\tt A^+,N,S^+\}$.
Then, as $\tt S^*N$ and $\tt NS^*$ are forbidden (by Theorem \ref{main result for order 2}), 
$c_{k-1}, c_{k+1} \in \{\tt A^+,S^+\}$; 
moreover, as $\tt A^+S^*A^+$ and $\tt S^+S^*A^+$ are forbidden (by Theorem \ref{sepr order-3 summary}), 
$c_{k-1}c_kc_{k+1} \in
\{ \tt A^+S^*S^+, S^+S^*S^+\}$.
Thus, $c_{k+1} = \tt S^+$.
Then, as no matrix has an sepr-sequence whose 
last term is $\tt S^+$, the order of $C$ is at least $k+2$; that is, $m \geq k+2$. Thus, $m \geq 4$.

Let $C'$ be a nonsingular $(k+1) \times (k+1)$ principal submatrix of $C$ and 
$\sepr(C')=c'_1c'_2 \cdots c'_{k+1}$
($C'$ exists because $c_{k+1} = \tt S^+$).
As $C'$ is nonsingular and of order $k+1$, 
and because $c_{k+1} = \tt S^+$, 
$c'_{k+1} = \tt A^+$.

\noindent
{\bf Claim 1}: $c'_{k} \neq \tt N$.

\noindent
We will now establish Claim 1.
By the Inheritance Theorem, 
$c'_{k-1} \in \{\tt A^+,N,S^+\}$.
Then, as
$c'_{k-1}c'_k c'_{k+1}=c'_{k-1}c'_k\tt A^+$,
Theorem \ref{sepr order-3 summary} and 
the $\tt NN$ Theorem imply that $c'_{k} \neq \tt N$. 
Thus, Claim 1 is true.

Because of Observation \ref{permutation}, we may assume, without loss of generality, that the rows and columns of $C$ are ordered in such a way that 
$C'=C[\{1,2, \dots , k+1\}]$.
By Claim 1, there exists a set 
$\alpha \subseteq \{1,2, \dots, k+1\}$ such that 
$|\alpha|=k$ and $C'[\alpha]$ is nonsingular.
Then, as $C'=C[\{1,2, \dots , k+1\}]$,
Observation \ref{permutation} implies that we may assume, without loss of generality, that the rows and columns of $C$ are ordered in such a way that
$\alpha = \{1,2,\dots,k\}$. 
Then, as $C'=C[\{1,2, \dots , k+1\}]$,
$C[\alpha] = C'[\alpha]$.
Thus, $\det(C[\alpha]) \neq 0$.
As $c_k = \tt S^*$, there exists a set $\beta \subseteq \{1,2,\dots,m\}$ such that $|\beta| = k$, $C[\beta]$ is nonsingular and the sign of $\det(C[\beta])$ is the opposite of the sign of $\det(C[\alpha])$.
Recall that $\alpha = \{1,2,\dots,k\}$.
Thus, $1 \in \alpha$.
As $|\alpha|=|\beta|$, 
and because $\alpha \neq \beta$,
$\alpha \not\subseteq \beta$.
Observe that, without loss of generality, we may assume that the rows and columns of $C$ that are indexed by $\alpha$ are ordered in such a way that 
$1 \notin \alpha \cap \beta$.
Then, as $1 \in \alpha$, $1 \notin \beta$.
Thus, $C[\beta]$ is a principal submatrix of the $(m-1)\times (m-1)$ submatrix $Y:=C[\{2,3,\dots,m\}]$.
As $\alpha = \{1,2,\dots,k\}$, 
and because $k \leq m-2$,
$C[\alpha]$ is a principal submatrix of 
$X:=C[\{1,2,\dots,m-1\}]$.
Let $\sepr(X)=x_1x_2 \cdots x_{m-1}$ and 
$\sepr(Y)=y_1y_2 \cdots y_{m-1}$.
Observe that, since $m \geq k+2$, 
$X$ and $Y$ are matrices of order at least $k+1$.
By the Inheritance Theorem, 
\[
x_{k-1}, y_{k-1}, x_{k+1}, y_{k+1}, \in 
\{\tt A^+,N,S^+\}.
\]

\noindent
{\bf Claim 2}: $x_k, y_k \notin \{\tt A^*, S^*\}$.

\noindent
We will now establish Claim 2.
As $\tt U^+A^*V^+$ is forbidden for all
$\tt U, V \in \{A,S\}$ 
(by Theorem \ref{sepr order-3 summary}),
and because $\tt A^*N$ and $\tt NA^*$ are forbidden (by Theorem \ref{main result for order 2}), 
$x_k \neq \tt A^*$ and $y_k \neq \tt A^*$.
As $X$ and $Y$ are principal submatrices of 
(not only $C$ but also) $B$,
and because their order (i.e., $m-1$) 
is at least $k+1$ and less than $m$,
the minimality of $m$ implies that 
$x_k \neq \tt S^*$ and $y_k \neq \tt S^*$.
Thus, Claim 2 is true.

Let $Z=C[\{1,2, \dots, m\} \setminus \{1,m\}]$ and
$\sepr(Z)=z_1z_2 \cdots z_{m-2}$.
Observe that, since $m \geq k+2$, 
the order of $Z$ (i.e., $m-2$) is at least $k$.

\noindent
{\bf Claim 3}: $z_{k} = \tt N$.

\noindent
We will now establish Claim 3.
Let $M$ be a $k \times k$ principal submatrix of $Z$.
We will now prove that $\det(M)=0$.
By Claim 2, all the nonzero order-$k$ principal minors of $X$ have the same sign; moreover,
all the nonzero order-$k$ principal minors of $Y$ have the same sign.
Recall that $C[\alpha]$ and $C[\beta]$ are nonsingular and $k \times k$, and that the former is a principal submatrix of $X$, and that the latter is a principal submatrix of $Y$.
Thus, all the nonzero order-$k$ principal minors of $X$ have the same sign as $\det(C[\alpha])$;
moreover, all the nonzero order-$k$ principal minors of $Y$ have the same sign as $\det(C[\beta])$.
As $M$ is a $k \times k$ principal submatrix of both $X$ and $Y$, $\det(M)$ is an order-$k$ principal minor of both $X$ and $Y$.
Thus, if it were the case that $\det(M) \neq 0$, 
then we would have
$\det(M)\det(C[\alpha])>0$ and 
$\det(M)\det(C[\beta])>0$, which would imply that 
$\det(C[\alpha])$ and $\det(C[\beta])$ have the same sign, leading to a contradiction.
Thus, $\det(M) =0$, as desired.
Then, as $M$ was arbitrary, Claim 3 is true.

Recall that $Z=C[\{1,2, \dots, m\}\setminus \{1,m\}]$.
As $m \geq k+2$, $k+1 \leq m-1$,
implying that the $k \times k$ submatrix 
$C[\{2,3,\dots,k+1\}]$ is a 
principal submatrix of $Z$.
Then, as $z_{k} = \tt N$ (by Claim 3),
$C[\{2,3,\dots,k+1\}]$ is singular.
Recall that $C'=C[\{1,2, \dots , k+1\}]$.
Thus, $C'[\{2,3,\dots,k+1\}]=C[\{2,3,\dots,k+1\}]$,
which is singular.
Thus, $C'$ has a singular $k \times k$ principal submatrix.
Then, as $c'_{k} \neq \tt N$ (by Claim 1),
$\uepr(c'_{k}) = \tt S$.
Recall that
$c'_{k-1} \in \{\tt A^+,N, S^+\}$ and 
$c'_{k+1} = \tt A^+$.
Thus, either 
$\uepr(c'_{k-1}c'_{k}c'_{k+1}) = \tt NSA$ 
(which is forbidden by Theorem \ref{NNA NNS NSA})
or 
$c'_{k-1}c'_{k}c'_{k+1}$ is a sequence that is forbidden by Theorem \ref{sepr order-3 summary}.
Thus, we have reached a contradiction, implying that
$\tt A^+S^*S^+$ and \ $\tt S^+S^*S^+$ are forbidden.
\end{proof}

The proof of the following theorem, 
which states that the sequences
$\tt A^-S^*S^-$ and \ $\tt S^-S^*S^-$ are 
forbidden, was obtained simply 
by replacing each `+' superscript in the proof of Theorem \ref{A+S*S+ and S+S*S+} with a `-' superscript; it has been included here in the interest of completeness.

\begin{thm}\label{A-S*S- and S-S*S-}
Neither $\tt A^-S^*S^-$ nor \ $\tt S^-S^*S^-$
occurs as a subsequence of the sepr-sequence of any Hermitian matrix.
\end{thm}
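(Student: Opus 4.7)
The plan is to mirror the proof of Theorem~\ref{A+S*S+ and S+S*S+} line by line, swapping every ``$+$'' superscript for a ``$-$'' superscript. Before committing to this, I would first check whether Observation~\ref{odd terms obs} already reduces the theorem to its $+$ counterpart: if $\sepr(B)$ contains $\tt A^-S^*S^-$ at positions $k-1,k,k+1$, then since $\negat(\tt S^*)=\tt S^*$ and since $k-1$ and $k+1$ share the same parity, $\sepr(-B)$ contains $\tt A^+S^*S^+$ at those positions whenever $k-1$ is odd, yielding an immediate contradiction with Theorem~\ref{A+S*S+ and S+S*S+}. When $k-1$ is even, however, this shortcut fails ($\negat$ fixes $\tt A^-$ and $\tt S^-$ at even positions), so a direct argument is still required. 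This is why the mechanical $+\leftrightarrow -$ translation is the cleanest uniform path.

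For the translated proof, I would suppose for contradiction that some Hermitian $B$ with $\sepr(B)=t_1\cdots t_n$ has $t_{k-1}t_kt_{k+1}\in\{\tt A^-S^*S^-,\,S^-S^*S^-\}$, and choose $m$ minimal so that some $m\times m$ principal submatrix $C$ of $B$ has $c_k=\tt S^*$. The Inheritance Theorem, combined with $\tt S^*N,\,NS^*$ being forbidden (Theorem~\ref{S*N and NS*}) and with the forbidden triples $\tt A^-S^*A^-$ and $\tt S^-S^*A^-$ from Theorem~\ref{sepr order-3 summary}(1), gives $c_{k-1}c_kc_{k+1}\in\{\tt A^-S^*S^-,\,S^-S^*S^-\}$; in particular $c_{k+1}=\tt S^-$, forcing $m\ge k+2$. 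I would then choose a nonsingular $(k+1)\times(k+1)$ principal submatrix $C'$ of $C$, note that $c_{k+1}=\tt S^-$ forces $c'_{k+1}=\tt A^-$, and establish the analogue of Claim~1 ($c'_k\neq\tt N$) using that $\tt A^-NA^-$, $\tt S^-NA^-$ (Theorem~\ref{sepr order-3 summary}(1)) and $\tt NNA^-$ (the $\tt NN$ Theorem) are all forbidden.

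Next, after permuting rows and columns so that a nonsingular $k\times k$ principal submatrix of $C'$ sits at indices $\{1,\dots,k\}$, I would locate $\beta\subseteq\{2,\dots,m\}$ with $|\beta|=k$ and $\det(C[\beta])$ of sign opposite to $\det(C[\{1,\dots,k\}])$. With $X=C[\{1,\dots,m-1\}]$, $Y=C[\{2,\dots,m\}]$ and $Z=C[\{2,\dots,m-1\}]$, the minimality of $m$ together with all triples of the form $\tt U^-A^*V^-$ being forbidden by Theorem~\ref{sepr order-3 summary} yields $x_k,y_k\notin\{\tt A^*,S^*\}$, and the same sign-comparison argument as in Theorem~\ref{A+S*S+ and S+S*S+} forces every $k\times k$ principal minor of $Z$ to vanish, so $z_k=\tt N$. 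In particular $C'[\{2,\dots,k+1\}]=C[\{2,\dots,k+1\}]$ is singular, so $C'$ has a singular $k\times k$ principal submatrix; together with Claim~1 this gives $\uepr(c'_k)=\tt S$, and then $c'_{k-1}c'_kc'_{k+1}$ is either a triple whose underlying epr-sequence is $\tt NSA$ (forbidden by Theorem~\ref{NNA NNS NSA}) or a triple forbidden by Theorem~\ref{sepr order-3 summary}, a contradiction.

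The only real obstacle is bookkeeping: one must verify that every forbidden-sequence result imported from Theorems~\ref{NNA NNS NSA}, \ref{sepr order-2 summary}, \ref{sepr order-3 summary}, \ref{S*N and NS*}, \ref{main result for order 2} and the $\tt NN$ Theorem survives the $+\leftrightarrow -$ swap. This is automatic: Theorem~\ref{sepr order-3 summary}(1) lists $\tt A^\pm XA^\pm$ and $\tt S^\pm XA^\pm$ in parallel, and the remaining results make no reference to signs, so no new combinatorial work is needed.
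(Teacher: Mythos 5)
Your proposal is correct and takes essentially the same approach as the paper, whose proof is obtained exactly as you describe: by replacing every ``$+$'' superscript in the proof of Theorem~\ref{A+S*S+ and S+S*S+} with a ``$-$'' superscript (the paper says as much just before the theorem). Your preliminary observation that Observation~\ref{odd terms obs} only disposes of the case where $k-1$ is odd is accurate and correctly explains why the full translated argument, rather than a negation shortcut, is needed.
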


\begin{proof}
Suppose on the contrary that there exists a Hermitian matrix $B$ such that
$\sepr(B) := t_1t_2 \cdots t_n$ and
$t_{k-1}t_kt_{k+1} \in
\{ \tt A^-S^*S^-, S^-S^*S^-\}$, 
for some integer $k$ with $2 \leq k \leq n-1$.
Let $m$ be the minimal integer with  
$k+1 \leq m \leq n$ for which 
$B$ has an $m \times m$ principal submatrix $C$ such that $\sepr(C) := c_1c_2 \cdots c_m$ and $c_{k} =  \tt S^*$. 
By the Inheritance Theorem,
$c_{k-1}, c_{k+1} \in \{\tt A^-,N,S^-\}$.
Then, as $\tt S^*N$ and $\tt NS^*$ are forbidden (by Theorem \ref{main result for order 2}), 
$c_{k-1}, c_{k+1} \in \{\tt A^-,S^-\}$; 
moreover, as $\tt A^-S^*A^-$ and $\tt S^-S^*A^-$ are forbidden (by Theorem \ref{sepr order-3 summary}), 
$c_{k-1}c_kc_{k+1} \in
\{ \tt A^-S^*S^-, S^-S^*S^-\}$.
Thus, $c_{k+1} = \tt S^-$.
Then, as no matrix has an sepr-sequence whose 
last term is $\tt S^-$, the order of $C$ is at least $k+2$; that is, $m \geq k+2$. Thus, $m \geq 4$.

Let $C'$ be a nonsingular $(k+1) \times (k+1)$ principal submatrix of $C$ and 
$\sepr(C')=c'_1c'_2 \cdots c'_{k+1}$
($C'$ exists because $c_{k+1} = \tt S^-$).
As $C'$ is nonsingular and of order $k+1$, 
and because $c_{k+1} = \tt S^-$, 
$c'_{k+1} = \tt A^-$.

\noindent
{\bf Claim 1}: $c'_{k} \neq \tt N$.

\noindent
We will now establish Claim 1.
By the Inheritance Theorem, 
$c'_{k-1} \in \{\tt A^-,N,S^-\}$.
Then, as
$c'_{k-1}c'_k c'_{k+1}=c'_{k-1}c'_k\tt A^-$,
Theorem \ref{sepr order-3 summary} and 
the $\tt NN$ Theorem imply that $c'_{k} \neq \tt N$. 
Thus, Claim 1 is true.

Because of Observation \ref{permutation}, we may assume, without loss of generality, that the rows and columns of $C$ are ordered in such a way that 
$C'=C[\{1,2, \dots , k+1\}]$.
By Claim 1, there exists a set 
$\alpha \subseteq \{1,2, \dots, k+1\}$ such that 
$|\alpha|=k$ and $C'[\alpha]$ is nonsingular.
Then, as $C'=C[\{1,2, \dots , k+1\}]$,
Observation \ref{permutation} implies that we may assume, without loss of generality, that the rows and columns of $C$ are ordered in such a way that
$\alpha = \{1,2,\dots,k\}$. 
Then, as $C'=C[\{1,2, \dots , k+1\}]$,
$C[\alpha] = C'[\alpha]$.
Thus, $\det(C[\alpha]) \neq 0$.
As $c_k = \tt S^*$, there exists a set $\beta \subseteq \{1,2,\dots,m\}$ such that $|\beta| = k$, $C[\beta]$ is nonsingular and the sign of $\det(C[\beta])$ is the opposite of the sign of $\det(C[\alpha])$.
Recall that $\alpha = \{1,2,\dots,k\}$.
Thus, $1 \in \alpha$.
As $|\alpha|=|\beta|$, 
and because $\alpha \neq \beta$,
$\alpha \not\subseteq \beta$.
Observe that, without loss of generality, we may assume that the rows and columns of $C$ that are indexed by $\alpha$ are ordered in such a way that 
$1 \notin \alpha \cap \beta$.
Then, as $1 \in \alpha$, $1 \notin \beta$.
Thus, $C[\beta]$ is a principal submatrix of the $(m-1)\times (m-1)$ submatrix $Y:=C[\{2,3,\dots,m\}]$.
As $\alpha = \{1,2,\dots,k\}$, 
and because $k \leq m-2$,
$C[\alpha]$ is a principal submatrix of 
$X:=C[\{1,2,\dots,m-1\}]$.
Let $\sepr(X)=x_1x_2 \cdots x_{m-1}$ and 
$\sepr(Y)=y_1y_2 \cdots y_{m-1}$.
Observe that, since $m \geq k+2$, 
$X$ and $Y$ are matrices of order at least $k+1$.
By the Inheritance Theorem, 
\[
x_{k-1}, y_{k-1}, x_{k+1}, y_{k+1}, \in 
\{\tt A^-,N,S^-\}.
\]

\noindent
{\bf Claim 2}: $x_k, y_k \notin \{\tt A^*, S^*\}$.

\noindent
We will now establish Claim 2.
As $\tt U^-A^*V^-$ is forbidden for all
$\tt U, V \in \{A,S\}$ 
(by Theorem \ref{sepr order-3 summary}),
and because $\tt A^*N$ and $\tt NA^*$ are forbidden (by Theorem \ref{main result for order 2}), 
$x_k \neq \tt A^*$ and $y_k \neq \tt A^*$.
As $X$ and $Y$ are principal submatrices of 
(not only $C$ but also) $B$,
and because their order (i.e., $m-1$) 
is at least $k+1$ and less than $m$,
the minimality of $m$ implies that 
$x_k \neq \tt S^*$ and $y_k \neq \tt S^*$.
Thus, Claim 2 is true.

Let $Z=C[\{1,2, \dots, m\} \setminus \{1,m\}]$ and
$\sepr(Z)=z_1z_2 \cdots z_{m-2}$.
Observe that, since $m \geq k+2$, 
the order of $Z$ (i.e., $m-2$) is at least $k$.

\noindent
{\bf Claim 3}: $z_{k} = \tt N$.

\noindent
We will now establish Claim 3.
Let $M$ be a $k \times k$ principal submatrix of $Z$.
We will now prove that $\det(M)=0$.
By Claim 2, all the nonzero order-$k$ principal minors of $X$ have the same sign; moreover,
all the nonzero order-$k$ principal minors of $Y$ have the same sign.
Recall that $C[\alpha]$ and $C[\beta]$ are nonsingular and $k \times k$, and that the former is a principal submatrix of $X$, and that the latter is a principal submatrix of $Y$.
Thus, all the nonzero order-$k$ principal minors of $X$ have the same sign as $\det(C[\alpha])$;
moreover, all the nonzero order-$k$ principal minors of $Y$ have the same sign as $\det(C[\beta])$.
As $M$ is a $k \times k$ principal submatrix of both $X$ and $Y$, $\det(M)$ is an order-$k$ principal minor of both $X$ and $Y$.
Thus, if it were the case that $\det(M) \neq 0$, 
then we would have
$\det(M)\det(C[\alpha])>0$ and 
$\det(M)\det(C[\beta])>0$, which would imply that 
$\det(C[\alpha])$ and $\det(C[\beta])$ have the same sign, leading to a contradiction.
Thus, $\det(M) =0$, as desired.
Then, as $M$ was arbitrary, Claim 3 is true.

Recall that $Z=C[\{1,2, \dots, m\}\setminus \{1,m\}]$.
As $m \geq k+2$, $k+1 \leq m-1$,
implying that the $k \times k$ submatrix 
$C[\{2,3,\dots,k+1\}]$ is a 
principal submatrix of $Z$.
Then, as $z_{k} = \tt N$ (by Claim 3),
$C[\{2,3,\dots,k+1\}]$ is singular.
Recall that $C'=C[\{1,2, \dots , k+1\}]$.
Thus, $C'[\{2,3,\dots,k+1\}]=C[\{2,3,\dots,k+1\}]$,
which is singular.
Thus, $C'$ has a singular $k \times k$ principal submatrix.
Then, as $c'_{k} \neq \tt N$ (by Claim 1),
$\uepr(c'_{k}) = \tt S$.
Recall that
$c'_{k-1} \in \{\tt A^-,N, S^-\}$ and 
$c'_{k+1} = \tt A^-$.
Thus, either 
$\uepr(c'_{k-1}c'_{k}c'_{k+1}) = \tt NSA$ 
(which is forbidden by Theorem \ref{NNA NNS NSA})
or 
$c'_{k-1}c'_{k}c'_{k+1}$ is a sequence that is forbidden by Theorem \ref{sepr order-3 summary}.
Thus, we have reached a contradiction, implying that
$\tt A^-S^*S^-$ and \ $\tt S^-S^*S^-$ are forbidden.
\end{proof}

Theorem \ref{main result for order 2} and the next theorem, together, provide the answer to Question \ref{question-Hermitian}.

\begin{thm}\label{main result for order 3}
A sequence $\sigma$ from 
$\{\tt A^*,A^+,A^-,N,S^*,S^+,S^-\}$ of order $3$ does not occur as a subsequence of the sepr-sequence of any Hermitian matrix
if and only if 
one of the following statements holds:
\begin{enumerate}
\item $\sigma \in \{\tt A^+XA^+,A^-XA^-,S^+XA^+,S^-XA^-\}$, for some 
$\tt X \in \{\tt A^*,N,S^*,S^+,S^-\}$. \label{AXA and SXA}
\item $\sigma \in \{\tt A^+YS^+,A^-YS^-,S^+YS^+,S^-YS^-\}$, for some
$\tt Y \in \{\tt A^*,N,S^*\}$. \label{AYS and SYS}
\item  $\tt A^*N$ or \ $\tt NA^*$ or \ $\tt S^*N$ or \ $\tt NS^*$ is a subsequence of $\sigma$. \label{star N and N star}
\item $\uepr(\sigma) \in \{\tt NNA, NNS, NSA\}$. \label{NNA and NNS and NSA}
\end{enumerate}
\end{thm}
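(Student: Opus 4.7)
The plan is to prove the two directions separately. For the ``if'' direction (every $\sigma$ satisfying one of (1)--(4) is forbidden), each case reduces immediately to a result already in hand. Case (\ref{AXA and SXA}) is exactly Theorem \ref{sepr order-3 summary}(1); case (\ref{AYS and SYS}) for $\tt Y \in \{A^*,N\}$ is Theorem \ref{sepr order-3 summary}(2), and for $\tt Y = \tt S^*$ it is the content of Theorems \ref{A+S*S+ and S+S*S+} and \ref{A-S*S- and S-S*S-} just proved; case (\ref{star N and N star}) follows from Theorem \ref{sepr order-2 summary} and Theorem \ref{S*N and NS*} applied to the offending length-two subsequence; and case (\ref{NNA and NNS and NSA}) is Theorem \ref{NNA NNS NSA} applied to $\uepr(\sigma)$.

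For the ``only if'' direction, I must exhibit, for every one of the $7^3=343$ order-three sequences $\sigma$ that avoids all four conditions, a Hermitian matrix $B$ such that $\sigma$ is a subsequence of $\sepr(B)$. The first reduction is Observation \ref{odd terms obs}: since replacing $B$ by $-B$ swaps the $+$ and $-$ superscripts in every odd position, it suffices to treat one representative from each orbit under this involution. After this reduction, the remaining sequences are collected into Table \ref{long table}; each row lists a target sequence of length three, four, five, or six, together with a realizing matrix drawn from Section \ref{s: lemmas}, whose sepr-sequence contains $\sigma$ as a subsequence.

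The key realization toolkit will be: Lemmas \ref{SSS etc} and \ref{ASS etc} handle the bulk of three-$\tt S$ and $\tt A$--$\tt S$--$\tt S$ patterns via the padding Observations \ref{Append Zero} and \ref{Append LRC}; Lemmas \ref{VierOne} through \ref{VierSix} cover the order-four sepr-sequences realized by explicit $4\times 4$ matrices; Lemmas \ref{FiveOne} and \ref{FiveTwo}, together with the Inverse Theorem, absorb the order-five sequences arising as reverses; Lemma \ref{SixOne} and Lemma \ref{NSFreal} supply the few order-six or otherwise delicate real witnesses; and Lemmas \ref{Complex} and \ref{NSFcom} supply the complex (non-real) Hermitian examples required by those subsequences that no real symmetric matrix realizes. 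Very short $\tt N$-heavy strings are handled trivially by zero matrices and direct sums.

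The principal obstacle is not the individual matrix computations (those have already been recorded as the lemmas of Section \ref{s: lemmas}) but the \emph{completeness} of the enumeration. One must carefully verify that Table \ref{long table}, together with the negative-involution reduction, actually accounts for every order-three sequence excluded by (\ref{AXA and SXA})--(\ref{NNA and NNS and NSA}) and, symmetrically, that no row of the table secretly realizes a sequence that the theorem claims to be forbidden. Concretely this amounts to intersecting the $343$-element universe with the four exclusion rules, partitioning the complement into orbits under $\sigma \mapsto \negat(\sigma)$, and matching each orbit representative to a specific row of Table \ref{long table}. Once this bookkeeping is completed and each listed sepr-sequence is verified, the theorem follows by direct inspection.
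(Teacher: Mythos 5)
Your proposal is correct and takes essentially the same route as the paper: the forward direction is the identical case-by-case appeal to Theorem \ref{sepr order-3 summary}, Theorems \ref{A+S*S+ and S+S*S+} and \ref{A-S*S- and S-S*S-}, Theorem \ref{main result for order 2}, and Theorem \ref{NNA NNS NSA}, and the reverse direction rests on the same tables of realizing matrices built from the lemmas of Section \ref{s: lemmas}. The only cosmetic difference is that the paper packages the enumeration as a counting argument --- it exhibits $92$ sequences satisfying (1)--(4), observes that the $65+186=251$ sequences realized in \cite[Table 2]{XMR-sepr} and Table \ref{long table} are disjoint, and concludes $|\F| \leq 343-251 = 92 \leq |\T|$ --- rather than matching each non-excluded sequence directly to a table row as you describe (and note that your negation reduction must be applied positionally, per Observation \ref{odd terms obs}, not termwise to the order-$3$ subsequence).
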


\begin{proof}
Let $\T$ be the set whose elements are all
the sequences $\sigma$ from  
$\{\tt A^*,A^+,A^-,N,S^*,S^+,S^-\}$ of order $3$
for which
statement (\ref{AXA and SXA}) or (\ref{AYS and SYS}) or (\ref{star N and N star}) or (\ref{NNA and NNS and NSA}) holds. 
Thus, $\sigma \in \T$ if and only if
one of those four statements holds.
Let $\F$ be the set whose elements are 
all the sequences from  
$\{\tt A^*,A^+,A^-,N,S^*,S^+,S^-\}$ of order $3$ 
that are forbidden.
Thus, it suffices to prove that 
$\T = \F$.

We will now prove that $\T \subseteq \F$.
To do that, it suffices to prove that 
if statement (\ref{AXA and SXA}) or (\ref{AYS and SYS}) or (\ref{star N and N star}) or (\ref{NNA and NNS and NSA}) holds, then $\sigma \in \F$.
If statement (\ref{AXA and SXA})  holds, 
then, by Theorem \ref{sepr order-3 summary},
$\sigma \in \F$.
If statement (\ref{AYS and SYS}) holds, then
Theorem \ref{sepr order-3 summary} and 
Theorem \ref{A+S*S+ and S+S*S+} and
Theorem \ref{A-S*S- and S-S*S-} imply that
$\sigma \in \F$.
If statement (\ref{star N and N star}) holds, then Theorem \ref{main result for order 2} implies that 
$\sigma \in \F$.
If statement (\ref{NNA and NNS and NSA}) holds,
then, by Theorem \ref{NNA NNS NSA},
$\sigma \in \F$.
It follows, then, that $\T \subseteq \F$, as desired.

To conclude the proof, 
it suffices to prove that $|\T|=|\F|$.
It is left as an exercise for the reader to 
verify that each of the following 92 sequences is an element of the set $\T$:
$\tt A^+ A^* A^+$,
$\tt A^- A^* A^-$,
$\tt A^* A^* N$,
$\tt A^+ A^* N$,
$\tt A^- A^* N$,
$\tt A^+ A^* S^+$,
$\tt A^- A^* S^-$,
$\tt A^* N A^*$,
$\tt A^* N A^+$,
$\tt A^* N A^-$,
$\tt A^+ N A^*$,
$\tt A^+ N A^+$,
$\tt A^- N A^*$,
$\tt A^- N A^-$,
$\tt A^* N N$,
$\tt A^* N S^*$,
$\tt A^* N S^+$,
$\tt A^* N S^-$,
$\tt A^+ N S^*$,
$\tt A^+ N S^+$,
$\tt A^- N S^*$,
$\tt A^- N S^-$,
$\tt A^+ S^* A^+$,
$\tt A^+ S^+ A^+$,
$\tt A^+ S^- A^+$,
$\tt A^- S^* A^-$,
$\tt A^- S^+ A^-$,
$\tt A^- S^- A^-$,
$\tt A^* S^* N$,
$\tt A^+ S^* N$,
$\tt A^- S^* N$,
$\tt A^+ S^* S^+$,
$\tt A^- S^* S^-$,
$\tt N A^* A^*$,
$\tt N A^* A^+$,
$\tt N A^* A^-$,
$\tt N A^* N$,
$\tt N A^* S^*$,
$\tt N A^* S^+$,
$\tt N A^* S^-$,
$\tt N N A^*$,
$\tt N N A^+$,
$\tt N N A^-$,
$\tt N N S^*$,
$\tt N N S^+$,
$\tt N N S^-$,
$\tt N S^* A^*$,
$\tt N S^* A^+$,
$\tt N S^* A^-$,
$\tt N S^+ A^*$,
$\tt N S^+ A^+$,
$\tt N S^+ A^-$,
$\tt N S^- A^*$,
$\tt N S^- A^+$,
$\tt N S^- A^-$,
$\tt N S^* N$,
$\tt N S^* S^*$,
$\tt N S^* S^+$,
$\tt N S^* S^-$,
$\tt S^+ A^* A^+$,
$\tt S^- A^* A^-$,
$\tt S^* A^* N$,
$\tt S^+ A^* N$,
$\tt S^- A^* N$,
$\tt S^+ A^* S^+$,
$\tt S^- A^* S^-$,
$\tt S^* N A^*$,
$\tt S^* N A^+$,
$\tt S^* N A^-$,
$\tt S^+ N A^*$,
$\tt S^+ N A^+$,
$\tt S^- N A^*$,
$\tt S^- N A^-$,
$\tt S^* N N$,
$\tt S^* N S^*$,
$\tt S^* N S^+$,
$\tt S^* N S^-$,
$\tt S^+ N S^*$,
$\tt S^+ N S^+$,
$\tt S^- N S^*$,
$\tt S^- N S^-$,
$\tt S^+ S^* A^+$,
$\tt S^+ S^+ A^+$,
$\tt S^+ S^- A^+$,
$\tt S^- S^* A^-$,
$\tt S^- S^+ A^-$,
$\tt S^- S^- A^-$,
$\tt S^* S^* N$,
$\tt S^+ S^* N$,
$\tt S^- S^* N$,
$\tt S^+ S^* S^+$ and
$\tt S^- S^* S^-$.
Thus, $|\T| \geq 92$.
Observe that there are $7^3=343$ sequences from  
$\{\tt A^*,A^+,A^-,N,S^*,S^+,S^-\}$ of order $3$.
The 65 sequences listed in \cite[Table 2]{XMR-sepr} and the 186 sequences listed in Table \ref{long table} are among those 343 sequences;
furthermore, 
as no sequence appears in both of those tables, 
at least $251$ ($65+186$) of those 343 sequences are not forbidden. 
Thus, $|\F| \leq 343-251 = 92$.
Then, as 
$\T \subseteq \F$, and because $|\T| \geq 92$, 
$|\T| = |\F|$, as desired.
\end{proof}

\begin{longtable}[c]{| l | c | l |}
\caption{The first column of this table contains all the sequences of order $3$ not appearing in \cite[Table 2]{XMR-sepr} that are \textit{not} forbidden
(i.e., that occur as a subsequence of the sepr-sequence of some Hermitian matrix).
For each sequence $\sigma$ in the first column of the table, the information that appears in the row that contains $\sigma$ provides sufficient information to establish that $\sigma$ is not forbidden;
in particular, if a matrix $M$ is referenced in that row (in the second column), then $\sigma$ is a subsequence of $\sepr(M)$ and $M$ is defined in the statement of the result that is stated in that row (in the third column).
} \label{long table}\\
    \hline
       Sequence & Hermitian matrix & Result \\ \hline  
        $\tt A^* A^* A^*$ & diag(1,1,1,-1) & ~ \\ \hline  
        $\tt A^* A^+ A^*$ & $\FiveOneA$ & Lemma \ref{FiveOne} (\ref{FiveOneA}) \\ \hline  
        $\tt A^* A^+ A^+$ & $(\VierOneA)^{-1}$ & Lemma \ref{VierOne} (\ref{VierOneA}) \\ \hline  
        $\tt A^* A^+ A^-$ & $\VierFiveB$ & Lemma \ref{VierFive} (\ref{VierFiveB}) \\ \hline  
        $\tt A^* A^- A^*$ & $-\FiveOneA$ & Lemma \ref{FiveOne} (\ref{FiveOneA}) \\ \hline  
        $\tt A^+ A^* A^*$ & $\VierOneA$ & Lemma \ref{VierOne} (\ref{VierOneA}) \\ \hline  
        $\tt A^+ A^+ A^*$ & $\VierOneB$ & Lemma \ref{VierOne} (\ref{VierOneB}) \\ \hline  
        $\tt A^+ A^- A^*$ & $\VierOneC$ & Lemma \ref{VierOne} (\ref{VierOneC}) \\ \hline  
        $\tt A^- A^* A^*$ & $-\VierOneA$ & Lemma \ref{VierOne} (\ref{VierOneA}) \\ \hline  
        $\tt A^- A^+ A^*$ & $-\VierOneB$ & Lemma \ref{VierOne} (\ref{VierOneB}) \\ \hline  
        $\tt A^- A^- A^*$ & $-\VierOneC$ & Lemma \ref{VierOne} (\ref{VierOneC}) \\ \hline  
        $\tt A^* A^+ N$ & $\VierOneD$ & Lemma \ref{VierOne} (\ref{VierOneD}) \\ \hline  
        $\tt A^* A^* S^*$ & $(\VierFourA)^{-1}$ & Lemma \ref{VierFour} (\ref{VierFourA}) \\ \hline  
        $\tt A^* A^* S^+$ & $\FiveOneB$ & Lemma \ref{FiveOne} (\ref{FiveOneB}) \\ \hline  
        $\tt A^* A^* S^-$ & $-\FiveOneB$ & Lemma \ref{FiveOne} (\ref{FiveOneB}) \\ \hline  
        $\tt A^* A^+ S^*$ & $\FiveTwoA$ & Lemma \ref{FiveTwo} (\ref{FiveTwoA}) \\ \hline  
        $\tt A^* A^+ S^+$ & $\FiveTwoB$ & Lemma \ref{FiveTwo} (\ref{FiveTwoB}) \\ \hline  
        $\tt A^* A^+ S^-$ & $\FiveTwoC$ & Lemma \ref{FiveTwo} (\ref{FiveTwoC}) \\ \hline  
        $\tt A^* A^- S^*$ & $-\FiveTwoA$ & Lemma \ref{FiveTwo} (\ref{FiveTwoA}) \\ \hline  
        $\tt A^* A^- S^+$ & $-\FiveTwoB$ & Lemma \ref{FiveTwo} (\ref{FiveTwoB}) \\ \hline  
        $\tt A^* A^- S^-$ & $-\FiveTwoC$ & Lemma \ref{FiveTwo} (\ref{FiveTwoC}) \\ \hline  
        $\tt A^+ A^* S^*$ & $(\VierFiveA)^{-1}$ & Lemma \ref{VierFive} (\ref{VierFiveA}) \\ \hline  
        $\tt A^+ A^* S^-$ & $\VierOneE$ & Lemma \ref{VierOne} (\ref{VierOneE}) \\ \hline  
        $\tt A^+ A^+ S^*$ & $-(\VierFourC)^{-1}$ & Lemma \ref{VierFour} (\ref{VierFourC}) \\ \hline  
        $\tt A^+ A^+ S^+$ & $-(\VierFourF)^{-1}$ & Lemma \ref{VierFour} (\ref{VierFourF}) \\ \hline  
        $\tt A^+ A^+ S^-$ & $(\VierFourH)^{-1}$ & Lemma \ref{VierFour} (\ref{VierFourH}) \\ \hline  
        $\tt A^+ A^- S^*$ & $(\VierFourB)^{-1}$ & Lemma \ref{VierFour} (\ref{VierFourB}) \\ \hline  
        $\tt A^+ A^- S^+$ & $(\VierFourE)^{-1}$ & Lemma \ref{VierFour} (\ref{VierFourE}) \\ \hline  
        $\tt A^+ A^- S^-$ & $-(\VierFourG)^{-1}$ & Lemma \ref{VierFour} (\ref{VierFourG}) \\ \hline  
        $\tt A^- A^* S^*$ & $(\VierFiveB)^{-1}$ & Lemma \ref{VierFive} (\ref{VierFiveB}) \\ \hline  
        $\tt A^- A^* S^+$ & $\SixOneA$ & Lemma \ref{SixOne} (\ref{SixOneA}) \\ \hline  
        $\tt A^- A^+ S^*$ & $(\VierFourC)^{-1}$ & Lemma \ref{VierFour} (\ref{VierFourC}) \\ \hline  
        $\tt A^- A^+ S^+$ & $-(\VierFourH)^{-1}$ & Lemma \ref{VierFour} (\ref{VierFourH}) \\ \hline  
        $\tt A^- A^+ S^-$ & $(\VierFourF)^{-1}$ & Lemma \ref{VierFour} (\ref{VierFourF}) \\ \hline  
        $\tt A^- A^- S^*$ & $-(\VierFourB)^{-1}$ & Lemma \ref{VierFour} (\ref{VierFourB}) \\ \hline  
        $\tt A^- A^- S^+$ & $(\VierFourG)^{-1}$ & Lemma \ref{VierFour} (\ref{VierFourG}) \\ \hline  
        $\tt A^- A^- S^-$ & $-(\VierFourE)^{-1}$ & Lemma \ref{VierFour} (\ref{VierFourE}) \\ \hline  
        $\tt A^+ N S^-$ & $-\NSFrealB$ & Lemma \ref{NSFreal} (\ref{NSFrealB})  \\ \hline  
        $\tt A^- N S^+$ & $\NSFrealB$ & Lemma \ref{NSFreal} (\ref{NSFrealB})  \\ \hline  
        $\tt A^* S^* A^*$ & $\VierOneF$ & Lemma \ref{VierOne} (\ref{VierOneF}) \\ \hline  
        $\tt A^* S^* A^+$ & $\VierTwoA$ & Lemma \ref{VierTwo} (\ref{VierTwoA})  \\ \hline  
        $\tt A^* S^* A^-$ & $-\VierTwoA$ & Lemma \ref{VierTwo} (\ref{VierTwoA})  \\ \hline  
        $\tt A^* S^+ A^*$ & $\FiveOneB$ & Lemma \ref{FiveOne} (\ref{FiveOneB}) \\ \hline  
        $\tt A^* S^+ A^+$ & $\SixOneB$  & Lemma \ref{SixOne} (\ref{SixOneB}) \\ \hline  
        $\tt A^* S^+ A^-$ & $-(\VierThreeC)^{-1}$ & Lemma \ref{VierThree} (\ref{VierThreeC}) \\ \hline  
        $\tt A^* S^- A^*$ & $-\FiveOneB$ & Lemma \ref{FiveOne} (\ref{FiveOneB}) \\ \hline  
        $\tt A^+ S^* A^*$ & $\VierOneG$ & Lemma \ref{VierOne} (\ref{VierOneG}) \\ \hline  
        $\tt A^+ S^+ A^*$ & $\VierOneH$ & Lemma \ref{VierOne} (\ref{VierOneH}) \\ \hline  
        $\tt A^+ S^- A^*$ & $\VierOneJ$ & Lemma \ref{VierOne} (\ref{VierOneJ}) \\ \hline  
        $\tt A^- S^* A^*$ & $-\VierOneG$ & Lemma \ref{VierOne} (\ref{VierOneG}) \\ \hline  
        $\tt A^- S^+ A^*$ & $-\VierOneH$ & Lemma \ref{VierOne} (\ref{VierOneH}) \\ \hline  
        $\tt A^- S^- A^*$ & $-\VierOneJ$ & Lemma \ref{VierOne} (\ref{VierOneJ}) \\ \hline  
        $\tt A^* S^+ N$ & $-\VierOneE$ & Lemma \ref{VierOne} (\ref{VierOneE}) \\ \hline  
        $\tt A^* S^* S^*$ & $(\VierFourK)^{-1}$ & Lemma \ref{VierFour} (\ref{VierFourK}) \\ \hline  
        $\tt A^* S^* S^+$ & ~ & Lemma \ref{ASS etc} \\ \hline  
        $\tt A^* S^* S^-$ & ~ & Lemma \ref{ASS etc} \\ \hline  
        $\tt A^* S^+ S^*$ & $\FiveTwoD$ & Lemma \ref{FiveTwo} (\ref{FiveTwoD}) \\ \hline  
        $\tt A^* S^+ S^+$ & $\FiveTwoE$ & Lemma \ref{FiveTwo} (\ref{FiveTwoE}) \\ \hline  
        $\tt A^* S^+ S^-$ & $\FiveTwoF$ & Lemma \ref{FiveTwo} (\ref{FiveTwoF}) \\ \hline  
        $\tt A^* S^- S^*$ & $-\FiveTwoD$ & Lemma \ref{FiveTwo} (\ref{FiveTwoD}) \\ \hline  
        $\tt A^* S^- S^+$ & ~ & Lemma \ref{ASS etc} \\ \hline  
        $\tt A^* S^- S^-$ & ~ & Lemma \ref{ASS etc} \\ \hline  
        $\tt A^+ S^* S^*$ & $(\VierFiveD)^{-1}$ & Lemma \ref{VierFive} (\ref{VierFiveD}) \\ \hline  
        $\tt A^+ S^* S^-$ & ~ & Lemma \ref{ASS etc} \\ \hline  
        $\tt A^+ S^+ S^*$ & $-(\VierFourM)^{-1}$ & Lemma \ref{VierFour} (\ref{VierFourM}) \\ \hline  
        $\tt A^+ S^+ S^+$ & ~ & Lemma \ref{ASS etc} \\ \hline  
        $\tt A^+ S^+ S^-$ & ~ & Lemma \ref{ASS etc} \\ \hline  
        $\tt A^+ S^- S^*$ & $(\VierFourL)^{-1}$ & Lemma \ref{VierFour} (\ref{VierFourL}) \\ \hline  
        $\tt A^+ S^- S^+$ & ~ & Lemma \ref{ASS etc} \\ \hline  
        $\tt A^+ S^- S^-$ & ~ & Lemma \ref{ASS etc} \\ \hline  
        $\tt A^- S^* S^*$ & $-(\VierFiveD)^{-1}$ & Lemma \ref{VierFive} (\ref{VierFiveD}) \\ \hline  
        $\tt A^- S^* S^+$ & ~ & Lemma \ref{ASS etc} \\ \hline  
        $\tt A^- S^+ S^*$ & $(\VierFourM)^{-1}$ & Lemma \ref{VierFour} (\ref{VierFourM}) \\ \hline  
        $\tt A^- S^+ S^+$ & ~ & Lemma \ref{ASS etc} \\ \hline  
        $\tt A^- S^+ S^-$ & ~ & Lemma \ref{ASS etc} \\ \hline  
        $\tt A^- S^- S^*$ & $-(\VierFourL)^{-1}$ & Lemma \ref{VierFour} (\ref{VierFourL}) \\ \hline  
        $\tt A^- S^- S^+$ & ~ & Lemma \ref{ASS etc} \\ \hline  
        $\tt A^- S^- S^-$ & ~ & Lemma \ref{ASS etc} \\ \hline  
        $\tt N A^+ A^*$ & $\NSFcomA$ & Lemma \ref{NSFcom} (\ref{NSFcomA})  \\ \hline  
        $\tt N A^+ A^+$ & $-(\NSFrealB)^{-1}$ & Lemma \ref{NSFreal} (\ref{NSFrealB}) \\ \hline  
        $\tt N A^+ A^-$ & $(\NSFrealB)^{-1}$ & Lemma \ref{NSFreal} (\ref{NSFrealB}) \\ \hline  
        $\tt N A^- A^*$ & $\VierThreeA$ & Lemma \ref{VierThree} (\ref{VierThreeA}) \\ \hline  
        $\tt N A^+ N$ & $\NSFcomB$ & Lemma \ref{NSFcom} (\ref{NSFcomB})  \\ \hline  
        $\tt N A^+ S^*$ & $\ComOneA$ & Lemma \ref{Complex} (\ref{ComOneA}) \\ \hline  
        $\tt N A^+ S^+$ & $\ComOneB$ & Lemma \ref{Complex} (\ref{ComOneB}) \\ \hline  
        $\tt N A^+ S^-$ & $-\ComOneB$ & Lemma \ref{Complex} (\ref{ComOneB}) \\ \hline  
        $\tt N A^- S^*$ & $\ComTwoA$ & Lemma \ref{Complex} (\ref{ComTwoA}) \\ \hline  
        $\tt N A^- S^+$ & $\ComTwoB$ & Lemma \ref{Complex} (\ref{ComTwoB}) \\ \hline  
        $\tt N A^- S^-$ & $-\ComTwoB$ & Lemma \ref{Complex} (\ref{ComTwoB}) \\ \hline  
        $\tt N S^+ N$ & $\NSFrealB$ & Lemma \ref{NSFreal} (\ref{NSFrealB})  \\ \hline  
        $\tt N S^+ S^*$ & $\NSFrealE$ & Lemma \ref{NSFreal} (\ref{NSFrealE})  \\ \hline  
        $\tt N S^+ S^+$ & $\NSFrealF$ & Lemma \ref{NSFreal} (\ref{NSFrealF})  \\ \hline  
        $\tt N S^+ S^-$ & $-\NSFrealF$ & Lemma \ref{NSFreal} (\ref{NSFrealF})  \\ \hline  
        $\tt N S^- S^*$ & $\VierThreeB$ & Lemma \ref{VierThree} (\ref{VierThreeB}) \\ \hline  
        $\tt N S^- S^+$ & ~ & Lemma \ref{ASS etc} \\ \hline  
        $\tt N S^- S^-$ & ~ & Lemma \ref{ASS etc} \\ \hline  
        $\tt S^* A^* A^*$ & $\VierFourA$ & Lemma \ref{VierFour} (\ref{VierFourA}) \\ \hline  
        $\tt S^* A^* A^+$ & $\VierFiveB$ & Lemma \ref{VierFive} (\ref{VierFiveB}) \\ \hline  
        $\tt S^* A^* A^-$ & $-\VierFiveB$ & Lemma \ref{VierFive} (\ref{VierFiveB}) \\ \hline  
        $\tt S^* A^+ A^*$ & $\FiveOneC$ & Lemma \ref{FiveOne} (\ref{FiveOneC}) \\ \hline  
        $\tt S^* A^+ A^+$ & $\VierFiveD$ & Lemma \ref{VierFive} (\ref{VierFiveD}) \\ \hline  
        $\tt S^* A^+ A^-$ & $-(\VierOneG)^{-1}$ & Lemma \ref{VierOne} (\ref{VierOneG}) \\ \hline  
        $\tt S^* A^- A^*$ & $-\FiveOneC$ & Lemma \ref{FiveOne} (\ref{FiveOneC}) \\ \hline  
        $\tt S^+ A^* A^*$ & $\VierFourD$ & Lemma \ref{VierFour} (\ref{VierFourD}) \\ \hline  
        $\tt S^+ A^+ A^*$ & $\SixOneA$ & Lemma \ref{SixOne} (\ref{SixOneA}) \\ \hline  
        $\tt S^+ A^+ A^+$ & $-\NSFrealA$ & Lemma \ref{NSFreal} (\ref{NSFrealA})  \\ \hline  
        $\tt S^+ A^+ A^-$ & $-\VierOneI$ & Lemma \ref{VierOne} (\ref{VierOneI}) \\ \hline  
        $\tt S^+ A^- A^*$ & $(\FiveTwoC)^{-1}$ & Lemma \ref{FiveTwo} (\ref{FiveTwoC}) \\ \hline  
        $\tt S^- A^* A^*$ & $-\VierFourD$ & Lemma \ref{VierFour} (\ref{VierFourD}) \\ \hline  
        $\tt S^- A^+ A^*$ & $\FiveOneD$ & Lemma \ref{FiveOne} (\ref{FiveOneD}) \\ \hline  
        $\tt S^- A^+ A^+$ & $(\VierOneJ)^{-1}$ & Lemma \ref{VierOne} (\ref{VierOneJ}) \\ \hline  
        $\tt S^- A^+ A^-$ & $-(\VierOneH)^{-1}$ & Lemma \ref{VierOne} (\ref{VierOneH}) \\ \hline  
        $\tt S^- A^- A^*$ & $-\FiveOneD$ & Lemma \ref{FiveOne} (\ref{FiveOneD}) \\ \hline  
        $\tt S^* A^+ N$ & $\VierTwoA$ & Lemma \ref{VierTwo} (\ref{VierTwoA})  \\ \hline  
        $\tt S^+ A^+ N$ & $\VierTwoC$ & Lemma \ref{VierTwo} (\ref{VierTwoC})  \\ \hline  
        $\tt S^- A^+ N$ & $\VierTwoB$ & Lemma \ref{VierTwo} (\ref{VierTwoB})  \\ \hline  
        $\tt S^* A^* S^*$ & $\VierFourI$ & Lemma \ref{VierFour} (\ref{VierFourI}) \\ \hline  
        $\tt S^* A^* S^+$ & $\VierFiveC$ & Lemma \ref{VierFive} (\ref{VierFiveC}) \\ \hline  
        $\tt S^* A^* S^-$ & $-\VierFiveC$ & Lemma \ref{VierFive} (\ref{VierFiveC}) \\ \hline  
        $\tt S^* A^+ S^*$ & $\FiveTwoG$ & Lemma \ref{FiveTwo} (\ref{FiveTwoG}) \\ \hline  
        $\tt S^* A^+ S^+$ & $\FiveTwoH$ & Lemma \ref{FiveTwo} (\ref{FiveTwoH}) \\ \hline  
        $\tt S^* A^+ S^-$ & $\FiveTwoI$ & Lemma \ref{FiveTwo} (\ref{FiveTwoI}) \\ \hline  
        $\tt S^* A^- S^*$ & $-\FiveTwoG$ & Lemma \ref{FiveTwo} (\ref{FiveTwoG}) \\ \hline  
        $\tt S^* A^- S^+$ & $-\FiveTwoH$ & Lemma \ref{FiveTwo} (\ref{FiveTwoH}) \\ \hline  
        $\tt S^* A^- S^-$ & $-\FiveTwoI$ & Lemma \ref{FiveTwo} (\ref{FiveTwoI}) \\ \hline  
        $\tt S^+ A^* S^*$ & $(\VierFiveC)^{-1}$ & Lemma \ref{VierFive} (\ref{VierFiveC}) \\ \hline  
        $\tt S^+ A^* S^-$ & $\VierFourJ$ & Lemma \ref{VierFour} (\ref{VierFourJ}) \\ \hline  
        $\tt S^+ A^+ S^*$ & $\SixOneB$ & Lemma \ref{SixOne} (\ref{SixOneB}) \\ \hline  
        $\tt S^+ A^+ S^+$ & $\NSFrealD$ & Lemma \ref{NSFreal} (\ref{NSFrealD})  \\ \hline  
        $\tt S^+ A^+ S^-$ & $\NSFrealC$ & Lemma \ref{NSFreal} (\ref{NSFrealC})  \\ \hline  
        $\tt S^+ A^- S^*$ & $(\FiveTwoI)^{-1}$ & Lemma \ref{FiveTwo} (\ref{FiveTwoI}) \\ \hline  
        $\tt S^+ A^- S^+$ & $(\FiveTwoL)^{-1}$ & Lemma \ref{FiveTwo} (\ref{FiveTwoL}) \\ \hline  
        $\tt S^+ A^- S^-$ & $-\NSFrealC$ & Lemma \ref{NSFreal} (\ref{NSFrealC})  \\ \hline  
        $\tt S^- A^* S^*$ & $-(\VierFiveC)^{-1}$ & Lemma \ref{VierFive} (\ref{VierFiveC}) \\ \hline  
        $\tt S^- A^* S^+$ & $-\VierFourJ$ & Lemma \ref{VierFour} (\ref{VierFourJ}) \\ \hline  
        $\tt S^- A^+ S^*$ & $\FiveTwoJ$ & Lemma \ref{FiveTwo} (\ref{FiveTwoJ}) \\ \hline  
        $\tt S^- A^+ S^+$ & $\FiveTwoK$ & Lemma \ref{FiveTwo} (\ref{FiveTwoK}) \\ \hline  
        $\tt S^- A^+ S^-$ & $\FiveTwoL$ & Lemma \ref{FiveTwo} (\ref{FiveTwoL}) \\ \hline  
        $\tt S^- A^- S^*$ & $-\FiveTwoJ$ & Lemma \ref{FiveTwo} (\ref{FiveTwoJ}) \\ \hline  
        $\tt S^- A^- S^+$ & $-\FiveTwoK$ & Lemma \ref{FiveTwo} (\ref{FiveTwoK}) \\ \hline  
        $\tt S^- A^- S^-$ & $-\FiveTwoL$ & Lemma \ref{FiveTwo} (\ref{FiveTwoL}) \\ \hline  
        $\tt S^+ N A^-$ & $\NSFrealB$ & Lemma \ref{NSFreal} (\ref{NSFrealB})  \\ \hline  
        $\tt S^- N A^+$ & $-\NSFrealB$ & Lemma \ref{NSFreal} (\ref{NSFrealB})  \\ \hline  
        $\tt S^+ N S^-$ & ~ & Lemma \ref{SSS etc} \\ \hline  
        $\tt S^- N S^+$ & $\NSFrealF$ & Lemma \ref{NSFreal} (\ref{NSFrealF}) \\ \hline  
        $\tt S^* S^* A^*$ & $\VierFourK$ & Lemma \ref{VierFour} (\ref{VierFourK}) \\ \hline  
        $\tt S^* S^* A^+$ & $\VierFiveD$ & Lemma \ref{VierFive} (\ref{VierFiveD}) \\ \hline  
        $\tt S^* S^* A^-$ & $-\VierFiveD$ & Lemma \ref{VierFive} (\ref{VierFiveD}) \\ \hline  
        $\tt S^* S^+ A^*$ & $\FiveOneE$  & Lemma \ref{FiveOne} (\ref{FiveOneE}) \\ \hline  
        $\tt S^* S^+ A^+$ & $\NSFrealD$ & Lemma \ref{NSFreal} (\ref{NSFrealD}) \\ \hline  
        $\tt S^* S^+ A^-$ & $\VierSixB$ & Lemma \ref{VierSix} (\ref{VierSixB}) \\ \hline  
        $\tt S^* S^- A^*$ & $-\FiveOneE$ & Lemma \ref{FiveOne} (\ref{FiveOneE}) \\ \hline  
        $\tt S^+ S^* A^*$ & $\VierFourN$ & Lemma \ref{VierFour} (\ref{VierFourN}) \\ \hline  
        $\tt S^+ S^+ A^*$ & $\SixOneC$ & Lemma \ref{SixOne} (\ref{SixOneC}) \\ \hline  
        $\tt S^+ S^+ A^-$ & $\FiveTwoE$ & Lemma \ref{FiveTwo} (\ref{FiveTwoE}) \\ \hline  
        $\tt S^+ S^- A^*$ & $\VierFourO$ & Lemma \ref{VierFour} (\ref{VierFourO}) \\ \hline  
        $\tt S^- S^* A^*$ & $-\VierFourN$ & Lemma \ref{VierFour} (\ref{VierFourN}) \\ \hline  
        $\tt S^- S^+ A^*$ & $\FiveOneF$ & Lemma \ref{FiveOne} (\ref{FiveOneF}) \\ \hline  
        $\tt S^- S^+ A^+$ & $-\FiveTwoE$ & Lemma \ref{FiveTwo} (\ref{FiveTwoE}) \\ \hline  
        $\tt S^- S^- A^*$ & $-\FiveOneF$ & Lemma \ref{FiveOne} (\ref{FiveOneF}) \\ \hline  
        $\tt S^* S^+ N$ &    & Lemma \ref{SSS etc} \\ \hline  
        $\tt S^* S^* S^*$ & $\VierSixA$ & Lemma \ref{VierSix} (\ref{VierSixA}) \\ \hline  
        $\tt S^* S^* S^+$ &    & Lemma \ref{SSS etc} \\ \hline  
        $\tt S^* S^* S^-$ &    & Lemma \ref{SSS etc} \\ \hline  
        $\tt S^* S^+ S^*$ & $\FiveTwoM$   & Lemma \ref{FiveTwo} (\ref{FiveTwoM}) \\ \hline  
        $\tt S^* S^+ S^+$ & $\FiveTwoN$   & Lemma \ref{FiveTwo} (\ref{FiveTwoN}) \\ \hline  
        $\tt S^* S^+ S^-$ & $\FiveTwoO$   & Lemma \ref{FiveTwo} (\ref{FiveTwoO}) \\ \hline  
        $\tt S^* S^- S^*$ & $-\FiveTwoM$   & Lemma \ref{FiveTwo} (\ref{FiveTwoM}) \\ \hline  
        $\tt S^* S^- S^+$ &    & Lemma \ref{SSS etc} \\ \hline  
        $\tt S^* S^- S^-$ &    & Lemma \ref{SSS etc} \\ \hline  
        $\tt S^+ S^* S^*$ & $-(\VierSixB)^{-1}$ & Lemma \ref{VierSix} (\ref{VierSixB}) \\ \hline  
        $\tt S^+ S^* S^-$ &    & Lemma \ref{SSS etc} \\ \hline  
        $\tt S^+ S^+ S^*$ & $\SixOneD$ & Lemma \ref{SixOne} (\ref{SixOneD}) \\ \hline  
        $\tt S^+ S^+ S^+$ &    & Lemma \ref{SSS etc} \\ \hline  
        $\tt S^+ S^+ S^-$ &    & Lemma \ref{SSS etc} \\ \hline  
        $\tt S^+ S^- S^*$ & $(\FiveTwoO)^{-1}$ & Lemma \ref{FiveTwo} (\ref{FiveTwoO}) \\ \hline  
        $\tt S^+ S^- S^+$ &    & Lemma \ref{SSS etc} \\ \hline  
        $\tt S^+ S^- S^-$ &    & Lemma \ref{SSS etc} \\ \hline  
        $\tt S^- S^* S^*$ & $(\FiveTwoO)^{-1}$   & Lemma \ref{FiveTwo} (\ref{FiveTwoO}) \\ \hline  
        $\tt S^- S^* S^+$ &    & Lemma \ref{SSS etc} \\ \hline  
        $\tt S^- S^+ S^*$ & $\FiveTwoP$ & Lemma \ref{FiveTwo} (\ref{FiveTwoP}) \\ \hline  
        $\tt S^- S^+ S^+$ &    & Lemma \ref{SSS etc} \\ \hline  
        $\tt S^- S^+ S^-$ &    & Lemma \ref{SSS etc} \\ \hline  
        $\tt S^- S^- S^*$ & $-\FiveTwoP$ & Lemma \ref{FiveTwo} (\ref{FiveTwoP}) \\ \hline  
        $\tt S^- S^- S^+$ &    & Lemma \ref{SSS etc} \\ \hline  
        $\tt S^- S^- S^-$ &    & Lemma \ref{SSS etc} \\ \hline  
\end{longtable}

Observe that the Inverse Theorem and Observation \ref{odd terms obs} are exploited in Table \ref{long table}.

\begin{rem}
\rm 
It follows immediately from the proof of Theorem \ref{main result for order 3} that the sequences that appear in Table \ref{long table} and  \cite[Table 2]{XMR-sepr}, together, constitute all the sequences of order $3$ that occur as a subsequence of the sepr-sequence of some Hermitian matrix
(i.e., the sequences of order $3$ that are \textit{not} forbidden).
\end{rem}

\section{Real symmetric matrices}\label{s: real symmetric}
$\null$
\indent
This section focuses on real symmetric matrices,
and it provides a (complete) answer to Question \ref{question-real symmetric}, 
by establishing the analog results for real symmetric matrices of 
Theorem \ref{main result for order 2} and 
Theorem \ref{main result for order 3}.

Theorem \ref{main result for order 2} asserts that
$\tt A^*N$, $\tt NA^*$, $\tt NS^*$ and $\tt S^*N$
are the only sequences from 
$\{\tt A^*,A^+,A^-,N,S^*,S^+,S^-\}$ 
of order 2 that are forbidden.
It follows from the proof of that theorem that
any sequence of order 2 that is not one of those four sequences occurs as a subsequence of the sepr-sequence of some real symmetric matrix. 
Thus, those four sequences are all the sequences of order $2$ that are forbidden over $\R$; 
that is encapsulated in the next theorem,
which provides one part of the answer to Question \ref{question-real symmetric}.

\begin{thm}\label{real symmetric main result for order 2}
A sequence from 
$\{\tt A^*,A^+,A^-,N,S^*,S^+,S^-\}$ of order $2$ does not occur as a subsequence of the sepr-sequence of any real symmetric matrix
if and only if 
it is one of the following sequences:
\[
\tt A^*N, \ NA^*, \ NS^* \mbox{ and \ } S^*N.
\]
\end{thm}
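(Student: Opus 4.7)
The proof should be short and essentially bookkeeping, because almost all the work has already been done in establishing Theorem \ref{main result for order 2}. The plan is to split into the two directions of the biconditional and to leverage the existing results and constructions.

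For the forward direction, I would note that every real symmetric matrix is Hermitian, so any sequence that is forbidden (over $\C$) is automatically forbidden over $\R$. Thus Theorem \ref{main result for order 2} immediately yields that $\tt A^*N$, $\tt NA^*$, $\tt NS^*$, and $\tt S^*N$ are forbidden over $\R$.

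For the reverse direction, I would exhibit a real symmetric matrix whose sepr-sequence contains each of the remaining $7^2-4=45$ order-$2$ sequences as a subsequence. The case $\tt NN$ is handled by $O_2$. For the other $44$ sequences, I would appeal directly to the proof of Theorem \ref{main result for order 2}: there each such $\sigma$ was shown to be a subsequence of some order-$3$ sequence $\pi$ appearing in the first column of Table \ref{long table}, and the parenthetical remark in that proof explicitly flags that $\pi$ can be chosen so that the matrix referenced in the second column of its row of Table \ref{long table} is \emph{real}. Every such matrix is a real symmetric matrix, so $\sigma$ is a subsequence of the sepr-sequence of a real symmetric matrix, and thus is not forbidden over $\R$.

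The main (and only) obstacle here is essentially verification rather than argument: one must confirm that the table entries used to realize each of the $44$ sequences are indeed real symmetric matrices rather than non-real Hermitian matrices. This is the reason the proof of Theorem \ref{main result for order 2} was written so as to record the realness of its witnesses; given that, the present theorem requires nothing more than citing that work. I would therefore conclude with a one-sentence remark that the argument of Theorem \ref{main result for order 2} produces real witnesses for all $44$ non-$\tt NN$ attainable sequences, and combine this with the $O_2$ example and the forbidden direction above to close the proof.
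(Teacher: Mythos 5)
Your proposal is correct and matches the paper's own argument: the paper likewise derives the forbidden direction from Theorem \ref{main result for order 2} via the fact that real symmetric matrices are Hermitian, and obtains the attainability of the remaining $45$ sequences from the real witnesses recorded in the proof of that theorem (the $O_2$ example for $\tt NN$ and the real matrices flagged in Table \ref{long table} for the other $44$).
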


We just saw that replacing the word `Hermitian' in Theorem \ref{main result for order 2} with `real symmetric' results in a true statement (namely, Theorem \ref{real symmetric main result for order 2}).
Will the result be the same if we do that to Theorem \ref{main result for order 3}? 
We will now demonstrate that the answer is negative.
By Theorem \ref{main result for order 3}, 
the sequence $\tt NA^+A^*$, which is one of the sequences that appears in the first column of Table \ref{long table}, is not forbidden, implying that there exists a Hermitian matrix whose sepr-sequence contains $\tt NA^+A^*$ as a subsequence.  An example of such a matrix is provided in Table \ref{long table}; the example is a non-real matrix. The reason why a non-real matrix was provided is because there is no real Hermitian matrix whose sepr-sequence contains $\tt NA^+A^*$ as a subsequence
(i.e., $\tt NA^+A^*$ is forbidden over $\R$).

\begin{prop}\label{NA^+A^* result}
There is no real symmetric matrix whose sepr-sequence contains $\tt NA^+A^*$ as a subsequence.
\end{prop}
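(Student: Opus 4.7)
The plan is to assume for contradiction that $B\in\Rnn$ is symmetric with $\sepr(B)=t_1t_2\cdots t_n$ and $t_{k-1}t_kt_{k+1}=\tt NA^+A^*$ for some $k$, then show that every $(k+1)\times(k+1)$ principal submatrix $D$ of $B$ is forced to have $\det(D)$ of a sign depending only on $k$, contradicting the definition of $\tt A^*$. By the Basic Proposition, $\tt NA^+$ cannot occur as an initial subsequence, so $k\geq 3$, and the defining property of $\tt A^*$ gives $n\geq k+2$. The $\tt NN$ Theorem excludes $t_{k-2}=\tt N$, and Theorem~\ref{SNA} excludes $\uepr(t_{k-2})=\tt S$ (since $B$ is real and $k\leq n-2$), so $t_{k-2}\in\{\tt A^*,A^+,A^-\}$; in particular every $(k-2)\times(k-2)$ principal minor of $B$ is nonzero.

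I now fix any $(k+1)\times(k+1)$ principal submatrix $D$ of $B$. By the Inheritance Theorem and $t_{k+1}=\tt A^*$, the matrix $D$ is nonsingular and has zero order-$(k-1)$ principal minors, positive order-$k$ principal minors, and nonzero order-$(k-2)$ principal minors. Put $M:=D^{-1}$ and $\epsilon:=\operatorname{sign}(\det D)$. Jacobi's determinantal identity $\det(M[\alpha])=\det(D[\alpha^c])/\det(D)$, applied with $|\alpha|=1,2,3$, yields: every diagonal entry $M_{ii}$ is nonzero with sign $\epsilon$; every order-$2$ principal minor of $M$ vanishes, so $M_{ij}^2=M_{ii}M_{jj}$; and every order-$3$ principal minor of $M$ is nonzero. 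Writing $M_{ii}=\epsilon\alpha_i^2$ with $\alpha_i>0$ and $M_{ij}=s_{ij}\alpha_i\alpha_j$ with $s_{ij}\in\{\pm 1\}$ for $i\neq j$, a direct $3\times 3$ determinant computation gives
\[
\det\bigl(M[\{a,b,c\}]\bigr)\;=\;2\alpha_a^2\alpha_b^2\alpha_c^2\bigl(s_{ab}s_{bc}s_{ac}-\epsilon\bigr),
\]
so nonvanishing forces the triangle identity $s_{ab}s_{bc}s_{ac}=-\epsilon$ for every triple of distinct indices. This is precisely where real symmetry enters the argument: for real $M$ the off-diagonal entry $M_{ij}$ is restricted to two values $\pm\alpha_i\alpha_j$, whereas in the complex Hermitian case $M_{ij}$ ranges over an entire circle, and the discrete triangle identity has no analogue.

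Next, using the triangle identity, I choose $\tau_1:=-\epsilon$ and $\tau_j:=s_{1j}$ for $j\geq 2$, and set $P:=\diag(\tau_1,\ldots,\tau_{k+1})$. A short verification using $s_{1i}s_{ij}s_{1j}=-\epsilon$ shows that $PMP$ has diagonal $\epsilon\alpha_i^2$ and constant off-diagonal entries $-\epsilon\alpha_i\alpha_j$; equivalently, $PMP=\epsilon\bigl(2D_\alpha^2-\alpha\alpha^T\bigr)$, where $\alpha=(\alpha_1,\ldots,\alpha_{k+1})^T$ and $D_\alpha=\diag(\alpha_1,\ldots,\alpha_{k+1})$. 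Since $P^2=I$, this conjugation preserves every principal minor, so $\det(M)=\det(PMP)$, and the matrix determinant lemma applied to the rank-one perturbation $2D_\alpha^2-\alpha\alpha^T$ yields
\[
\det(M)\;=\;\epsilon^{k+1}\cdot 2^k\prod_{i=1}^{k+1}\alpha_i^2\cdot(1-k).
\]

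For $k\geq 3$ the factor $(1-k)$ is negative, so $\operatorname{sign}(\det M)=-\epsilon^{k+1}$; but $\det M=1/\det D$ has sign $\epsilon$, hence $\epsilon^k=-1$. When $k$ is even this has no solution $\epsilon\in\{\pm 1\}$, so no $(k+1)\times(k+1)$ nonsingular principal submatrix of $B$ can exist, contradicting $t_{k+1}=\tt A^*$. When $k$ is odd the equation forces $\epsilon=-1$, so every $(k+1)\times(k+1)$ principal submatrix of $B$ has negative determinant and $t_{k+1}=\tt A^-$, again contradicting $t_{k+1}=\tt A^*$. The main obstacle is the gauge step: identifying the $\pm 1$ diagonal conjugation that simultaneously normalizes all off-diagonal signs of $M$ to a single value, turning $M$ into a rank-one-perturbed diagonal matrix whose determinant can be computed exactly and whose sign is pinned down by the parity of $k$.
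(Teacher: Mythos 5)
Your proof is correct, and it takes a genuinely different route from the paper's. Both arguments begin the same way: the Basic Proposition, the $\tt NN$ Theorem, and Theorem \ref{SNA} (which is where real symmetry first enters) force the term preceding the $\tt N$ to have underlying letter $\tt A$, so that the epr-sequence contains $\tt ANA$ with the $\tt N$ in an interior position. At that point the paper simply cites \cite[Theorem 5.6]{XMR-sepr}, which asserts that a real symmetric matrix whose epr-sequence contains such an $\tt ANA$ has no $\tt A^*$ term anywhere in its sepr-sequence. You instead prove the needed instance of that fact from scratch: for any nonsingular order-$(k+1)$ principal submatrix $D$, Jacobi's identity converts the data ``order-$(k-2)$ minors nonzero, order-$(k-1)$ minors zero, order-$k$ minors positive'' into constraints on the order-$1$, $2$, $3$ principal minors of $D^{-1}$; the vanishing order-$2$ minors pin the off-diagonal entries to $\pm\alpha_i\alpha_j$ (this is the second, and decisive, use of realness --- in the Hermitian case only the modulus is determined, which is consistent with $\tt NA^+A^*$ being attainable by the complex matrix $\NSFcomA$ in Table \ref{long table}); the nonvanishing order-$3$ minors give the triangle identity on the signs; and the diagonal $\pm1$ conjugation reduces $D^{-1}$ to a rank-one perturbation of a diagonal matrix, whose determinant sign is then forced, contradicting $t_{k+1}=\tt A^*$. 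I checked your $3\times 3$ determinant identity, the gauge normalization, and the matrix determinant lemma computation $\det(M)=\epsilon^{k+1}2^k(1-k)\prod_i\alpha_i^2$; all are correct, and the index bookkeeping ($k\ge 3$, $n\ge k+2$, so Theorem \ref{SNA} applies) is sound. What your approach buys is a self-contained proof that does not lean on the external structural theorem and that isolates exactly where the real (as opposed to Hermitian) hypothesis is used; what it costs is length and a computation that the paper's citation makes unnecessary.
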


\begin{proof}
Let $B$ be a real symmetric matrix and 
$\sepr(B)=t_1t_2\cdots t_n$.
Suppose on the contrary that 
$t_{k}t_{k+1}t_{k+2} = \tt NA^+A^*$,
for some integer $k$ with $1 \leq k \leq n-2$.
Let $\epr(B) = \ell_1\ell_2 \cdots \ell_n$.
Thus, $\ell_k\ell_{k+1}\ell_{k+2} = \tt NAA$.
By the Basic Proposition, $k \geq 2$.
As no matrix has an sepr-sequence whose last term is 
$\tt A^*$, $n \geq k+3$. 
Thus, $k+1 \leq n-2$, implying that
$\ell_{k-1}\ell_{k}\ell_{k+1}$ is a 
subsequence of $\ell_1\ell_2 \cdots \ell_{n-2}$.
Then, as 
$\ell_{k-1}\ell_{k}\ell_{k+1} = \ell_{k-1}\tt NA$,
Theorem \ref{SNA} implies that $\ell_{k-1} \neq \tt S$.
By the $\tt NN$ Theorem, $\ell_{k-1} \neq \tt N$.
Thus, $\ell_{k-1} = \tt A$.
It follows, then, that $\ell_{k-1}\ell_k\ell_{k+1} = \tt ANA$.
Then, as  $2 \leq k \leq n-2$,
\cite[Theorem 5.6]{XMR-sepr} implies that,
for all $j \in \{1,2, \dots, n\}$,
$t_j \neq \tt A^*$,
contradicting the fact that $t_{k+2} = \tt A^*$.
\end{proof}

In the proof of Theorem \ref{main result for order 3},
for each order-3 sequence that is not forbidden, we provided a Hermitian matrix whose sepr-sequence contains it as a subsequence (by referring to either  Table \ref{long table} or \cite[Table 2]{XMR-sepr}). 
A remark about those matrices is in order.

\begin{rem}\label{nine sequences}
\normalfont
In the proof of Theorem \ref{main result for order 3},
for all but nine of the order-3 sequences that
are not forbidden,
the provided Hermitian matrix whose sepr-sequence contains it as a subsequence is real. 
The aforementioned nine sequences are the following:
$\tt NA^+A^*$, $\tt NA^+N$, 
$\tt NA^+S^*$, $\tt NA^+S^+$, $\tt NA^+S^-$,
$\tt NA^-N$,
$\tt NA^-S^*$, $\tt NA^-S^+$ and $\tt NA^-S^-$. 
Eight of those nine sequences appear  in Table \ref{long table}, while the remaining one, $\tt NA^-N$, appears in \cite[Table 2]{XMR-sepr}.
\end{rem}

Theorem \ref{real symmetric main result for order 2} and the next theorem, together, provide the answer to Question \ref{question-real symmetric}.

\begin{thm}\label{real symmetric main result for order 3}
Let $\sigma$ be a sequence from 
$\{\tt A^*,A^+,A^-,N,S^*,S^+,S^-\}$ of order $3$. 
Then $\sigma$ does not occur as a subsequence of the sepr-sequence of any real symmetric matrix
if and only if 
one of the following statements holds:
\begin{enumerate}
\item $\sigma$ does not occur as a subsequence of the sepr-sequence of any Hermitian matrix (see Theorem \ref{main result for order 3}). \label{Hermitian result}
\item $\sigma = \tt NA^+Z$ \ or \ $\sigma = \tt NA^-Z$, \ for some \ $\tt Z \in \{N,S^*,S^+,S^-\}$. \label{NAN and NAS real}
\item $\sigma = \tt NA^+A^*$. \label{NA+A*}
\end{enumerate}
\end{thm}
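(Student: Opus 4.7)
The plan is to prove both directions of the biconditional separately, with the work concentrated in the forward direction (sequences in (\ref{Hermitian result})--(\ref{NA+A*}) are forbidden over $\R$) and the backward direction (everything else is realized by a real symmetric matrix) handled almost entirely by Remark \ref{nine sequences}.

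For the forward direction I will argue case by case. Statement (\ref{Hermitian result}) is immediate, since any real symmetric matrix is Hermitian. For statement (\ref{NA+A*}), Proposition \ref{NA^+A^* result} handles it directly. The case (\ref{NAN and NAS real}) I will handle by passing to underlying sequences: if $\sigma = {\tt N A^+ Z}$ or $\sigma = {\tt N A^- Z}$ with $\tt Z \in \{N, S^*, S^+, S^-\}$, then $\uepr(\sigma) \in \{\tt NAN, NAS\}$. If some real symmetric matrix $B$ had $\sigma$ as a subsequence of $\sepr(B)$, then $\uepr(\sigma)$ would be a subsequence of $\epr(B)$, contradicting Theorem \ref{NAN-NAS theorem}. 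Hence $\sigma$ is forbidden over $\R$.

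For the backward direction, suppose none of (\ref{Hermitian result}), (\ref{NAN and NAS real}), (\ref{NA+A*}) holds. Since (\ref{Hermitian result}) fails, Theorem \ref{main result for order 3} guarantees that some Hermitian matrix has $\sigma$ as a subsequence of its sepr-sequence, and such a matrix has been produced in the proof of Theorem \ref{main result for order 3} via Table \ref{long table} or \cite[Table 2]{XMR-sepr}. I will then invoke Remark \ref{nine sequences}, which asserts that the provided matrix is real except for the nine sequences
\[
{\tt NA^+A^*},\ {\tt NA^+N},\ {\tt NA^+S^*},\ {\tt NA^+S^+},\ {\tt NA^+S^-},\ {\tt NA^-N},\ {\tt NA^-S^*},\ {\tt NA^-S^+},\ {\tt NA^-S^-}.
\]
The key bookkeeping step is to observe that this nine-element list is precisely the union of the sequences covered by (\ref{NA+A*}) (one sequence) and (\ref{NAN and NAS real}) (eight sequences, namely four values of $\tt Z$ for each of the two choices of sign). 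Since by assumption $\sigma$ is in neither of these lists, the matrix associated with $\sigma$ in the proof of Theorem \ref{main result for order 3} is real, and we are done.

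The proof is essentially a bookkeeping argument that stitches together Theorem \ref{NAN-NAS theorem}, Proposition \ref{NA^+A^* result}, Theorem \ref{main result for order 3}, and Remark \ref{nine sequences}. The only real obstacle is to confirm that the nine exceptional sequences flagged in Remark \ref{nine sequences} coincide exactly with the union of cases (\ref{NAN and NAS real}) and (\ref{NA+A*})--- a finite enumeration that must be carried out explicitly so that no exceptional Hermitian-only sequence slips through unchecked.
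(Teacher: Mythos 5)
Your proposal is correct and follows essentially the same route as the paper: both directions rest on Theorem \ref{NAN-NAS theorem}, Proposition \ref{NA^+A^* result}, and the observation (Remark \ref{nine sequences}) that the nine Hermitian-only sequences coincide exactly with the union of cases (\ref{NAN and NAS real}) and (\ref{NA+A*}). Your phrasing of one direction as a contrapositive and your explicit reduction of case (\ref{NAN and NAS real}) to the underlying epr-sequences $\tt NAN$ and $\tt NAS$ are only cosmetic differences from the paper's argument.
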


\begin{proof}
Suppose that $\sigma$ is forbidden over $\R$.
Suppose that (\ref{Hermitian result})  does not hold.
We will now prove that either (\ref{NAN and NAS real}) or (\ref{NA+A*}) holds.
As (\ref{Hermitian result})  does not hold, $\sigma$ is not forbidden.
Then, as $\sigma$ is forbidden over $\R$,
$\sigma$ is one of the nine sequences that are mentioned in Remark \ref{nine sequences}.
Thus, either (\ref{NAN and NAS real}) or (\ref{NA+A*}) holds, as desired.

We will now establish the other direction. 
If (\ref{Hermitian result})  holds,
then the desired conclusion follows from the fact that all real symmetric matrices are Hermitian.
If either (\ref{NAN and NAS real}) or (\ref{NA+A*}) holds, then the desired conclusion follows from 
Theorem \ref{NAN-NAS theorem} and
Proposition \ref{NA^+A^* result}.
\end{proof}


The following corollary, which follows readily from Theorem \ref{main result for order 3}  and Theorem \ref{real symmetric main result for order 3},
implies that, of those sequences from 
$\{\tt A^*,A^+,A^-,N,S^*,S^+,S^-\}$ of order 3 that are not forbidden (see Theorem \ref{main result for order 3}),
the nine sequences mentioned in Remark \ref{nine sequences} are the only ones that are forbidden over $\R$.

\begin{cor}
Let $\sigma$ be a sequence from 
$\{\tt A^*,A^+,A^-,N,S^*,S^+,S^-\}$ of order $3$. 
Then $\sigma$ occurs as a subsequence of the sepr-sequence of some Hermitian matrix but not as a subsequence of the sepr-sequence of any real symmetric matrix if and only if one of the following statements holds:
\begin{enumerate}
\item $\sigma = \tt NA^+Z$ \ or \ $\sigma = \tt NA^-Z$, \ for some \ $\tt Z \in \{N,S^*,S^+,S^-\}$.
\item $\sigma = \tt NA^+A^*$.
\end{enumerate}
\end{cor}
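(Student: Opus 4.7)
The plan is to derive the corollary by combining Theorem \ref{main result for order 3} with Theorem \ref{real symmetric main result for order 3}, reading off the intersection of ``not forbidden'' and ``forbidden over $\R$.'' By the very definitions of the terms, $\sigma$ occurs as a subsequence of the sepr-sequence of some Hermitian matrix if and only if $\sigma$ is not forbidden, and $\sigma$ fails to occur as a subsequence of the sepr-sequence of any real symmetric matrix if and only if $\sigma$ is forbidden over $\R$. So the biconditional in the corollary asks to characterize those order-$3$ sequences $\sigma$ lying in $\F_\R \setminus \F$, where $\F$ denotes the set of forbidden sequences and $\F_\R$ the set of sequences forbidden over $\R$.

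First I would apply Theorem \ref{real symmetric main result for order 3}, which partitions the condition $\sigma \in \F_\R$ into the three cases labeled there: (i) $\sigma \in \F$, (ii) $\sigma \in \{\tt NA^+Z, NA^-Z\}$ for some $\tt Z \in \{N,S^*,S^+,S^-\}$, and (iii) $\sigma = \tt NA^+A^*$. Intersecting with the condition $\sigma \notin \F$ immediately eliminates case (i) and leaves precisely cases (ii) and (iii), which are items (1) and (2) of the corollary. Conversely, if $\sigma$ satisfies (1) or (2) of the corollary, then Theorem \ref{real symmetric main result for order 3} places $\sigma$ in $\F_\R$, while Remark \ref{nine sequences} shows $\sigma \notin \F$, so $\sigma$ has the stated attainability behavior.

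The only piece of bookkeeping that could cause trouble is verifying that the nine sequences arising in cases (ii) and (iii) actually lie outside $\F$; otherwise the set difference would be smaller than claimed. But this is exactly the content of Remark \ref{nine sequences}: the proof of Theorem \ref{main result for order 3} supplies, for each of these nine sequences, a (necessarily non-real) Hermitian matrix whose sepr-sequence contains it as a subsequence, with eight of the examples located in Table \ref{long table} and the ninth, for $\tt NA^-N$, in \cite[Table 2]{XMR-sepr}. With that remark in hand, neither direction of the biconditional requires any further argument, so the corollary follows at once.
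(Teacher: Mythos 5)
Your proposal is correct and matches the paper's intended derivation: the paper gives no explicit proof, stating only that the corollary "follows readily" from Theorems \ref{main result for order 3} and \ref{real symmetric main result for order 3}, and your argument — translating the statement into $\F_\R \setminus \F$, reading off cases (2) and (3) of Theorem \ref{real symmetric main result for order 3}, and using Remark \ref{nine sequences} to confirm the nine sequences lie outside $\F$ — is exactly that reading. Nothing is missing.
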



\section*{Acknowledgments}
$\null$
\indent
Kamonchanok Saejeam's research was supported, in part, by a Summer Research Apprenticeship grant from the Dean of the Faculty's Office at Bates College.


\end{document}